\newtheorem{theorem}{Theorem}[section]
\newtheorem{corollary}{Corollary}[section]
\newtheorem{lemma}{Lemma}[section]
\newtheorem{remark}{Remark}[section]
\numberwithin{equation}{section}
\DeclareMathOperator*{\esssup}{ess\,sup}
\newcommand{\ubar}[1]{\underaccent{\bar}{#1}}
\begin{document}

\begin{frontmatter}

\title{Convergence rates in homogenization of parabolic systems with locally periodic coefficients}

\author{Yao Xu\fnref{myfootnote}}
\address{School of Mathematical Sciences, University of Chinese Academy of Sciences, Beijing, 100190, CHINA}
\ead{xuyao89@gmail.com, xuyao@ucas.ac.cn}
\fntext[myfootnote]{Supported partially by NNSF of China (No. 11971031).}

\begin{abstract}
  In this paper we study the quantitative homogenization of second-order parabolic systems with locally periodic (in both space and time) coefficients. The $O(\varepsilon)$ scale-invariant error estimate in $L^2(0, T; L^{\frac{2d}{d-1}}(\Omega))$ is established in $C^{1, 1}$ cylinders under minimum smoothness conditions on the coefficients. This process relies on critical estimates of smoothing operators. We also develop a new construction of flux correctors in the parabolic manner and a sharp estimate for temporal boundary layers.
\end{abstract}

\begin{keyword}
homogenization\sep parabolic systems\sep scale-invariant convergence rates\sep locally periodic coefficients
\MSC[2010] 35B27
\end{keyword}

\end{frontmatter}


\section{Introduction}\label{parabolic_sec_intro}

Let $\Omega$ be a bounded Lipschitz domain in $\mathbb{R}^d$, $d\geq 2$, and $T>0$. We consider the sharp convergence rate in the homogenization of the initial-boundary value problem with locally periodic coefficients
\begin{equation}\label{parabolic_intro_eq1}
\begin{cases}
  \partial_t u_\varepsilon+\mathcal{L}_\varepsilon u_\varepsilon =f  &\mathrm{in}~\Omega\times (0, T),\\
  u_\varepsilon=g &\mathrm{on}~\partial\Omega\times(0, T),\\
 u_\varepsilon=h  & \mathrm{on}~ \Omega\times\{t=0\},
\end{cases}
\end{equation}
where \begin{equation}\label{parabolic_intro_exp_L} \mathcal{L}_\varepsilon:= - \frac{\partial }{\partial x_i}\Big[A_{ij}^{\alpha\beta}\Big(x, t; \frac{x}{\varepsilon}, \frac{t}{\varepsilon^2}\Big) \frac{\partial }{\partial x_j}\Big],\quad \varepsilon>0,~1\leq i, j\leq d,~1\leq \alpha, \beta\leq m. \end{equation}
Note that the summation convention for repeated indices is used here and throughout the paper, and we may also omit the superscripts $\alpha, \beta$ if it is clear to understand. The coefficient matrix $A(x, t; y, \tau)=(A^{\alpha\beta}_{ij}(x, t; y, \tau))$ defined on $\Omega_T\times \mathbb{R}^{d+1}$ is assumed to be $1$-periodic in $(y, \tau)$, i.e.,
\begin{align}\label{parabolic_intro_cond_periodic}
  A(x, t; y+z, \tau+s)=A(x, t; y, \tau)\quad \mathrm{for~any}~(z, s)\in \mathbb{Z}^{d+1},
\end{align}
and satisfy the boundedness and ellipticity conditions
\begin{gather}\label{parabolic_intro_cond_elliptic}
  \begin{split}
\|A\|_{L^\infty(\Omega_T\times\mathbb{R}^{d+1})}&\leq 1/\mu,\\
A^{\alpha\beta}_{ij}(x, t; y, \tau) \xi_i^\alpha \xi_j^\beta &\geq \mu |\xi|^2
\end{split}
\end{gather}
for any $\xi\in \mathbb{R}^{d\times m}$ and a.e. $(x, t; y, \tau)\in \Omega_T\times \mathbb{R}^{d+1}$, where $\mu>0$ and $\Omega_T:=\Omega\times (0, T)$. We also assume that
\begin{equation}\label{parabolic_intro_cond_AC}
  A\in \mathscr{H}(\Omega_T; \bm{L^\infty}),
\end{equation}
where $\mathscr{H}(\Omega_T; \bm{L^\infty})$ is defined in Section \ref{parabolic_sec_four}.

System \eqref{parabolic_intro_eq1} is a simplified model describing  physical processes or chemical reactions taking place in composite materials, such as thermal conduction, the sulfate corrosion of concrete (see \cite{Bensoussan1978,Fatima2011,Muntean2013} for more complicated models). It applies to processes in heterogeneous media, while the strictly periodic model
\begin{equation}\label{parabolic_intro_eq_normal}
\partial_t u_\varepsilon-\mathrm{div}(A(x/\varepsilon, t/\varepsilon^2)\nabla u_\varepsilon)=f
\end{equation}
is only suitable for homogeneous phenomena. From a micro perspective, the microscopic pattern of system \eqref{parabolic_intro_eq1} is allowed to differ at different times and positions. Since real media in biomechanics and engineering are almost never homogeneous, \eqref{parabolic_intro_eq1} covers better what happens in practical applications.

System \eqref{parabolic_intro_eq1} contains two levels of scales, the macroscopic scales $(x, t)$ and the microscopic scales $(x/\varepsilon, t/\varepsilon^2)$. Usually, variables $(x, t)$ in $A(x, t; y, \tau)$ are called macroscopic variables, denoting space-time positions, while $(y, \tau)$ are microscopic variables representing fast variations at the microscopic structure. In both \eqref{parabolic_intro_eq1} and \eqref{parabolic_intro_eq_normal}, the scales of fast variations in space and time match naturally, that is, they are consistent with the intrinsic scaling $(x, t)\rightarrow(\lambda x, \lambda^2t)$ of second order parabolic equations. Generally, one may consider models where the microscopic scales are $(x/\varepsilon, t/\varepsilon^k)$ with $0<k<\infty$. In the periodic setting \eqref{parabolic_intro_eq_normal}, when $k=2$ we say the scales are self-similar, and when $k\neq 2$ they are non-self-similar \cite{Geng2020_nonself-similar}. Only in the case where $k=2$, the spatial scale and the temporal scale are homogenized simultaneously. More generally, problems with multiple (matching or mismatching) scales in space and time have also been introduced in \cite{Bensoussan1978, Holmbom2005,Pastukhova2009_parabolic_locally,Floden2014,Danielsson2019} and their references, where the qualitative homogenization of different types of matches was discussed widely. However, to the author's knowledge, quite few quantitative results have been known for parabolic equations with multiple scales. For recent results on the quantitative homogenization of elliptic systems with multiple scales, we refer to \cite{Xu2019_stratified2,Niu2020_Reiterated} and references therein.

Although the homogenized equation for \eqref{parabolic_intro_eq1} has been derived early in \cite{Bensoussan1978}, there is not much progress in the quantitative homogenization. The only notable literature is \cite{Pastukhova2009_parabolic_locally,Pastukhova2010_parabolic_locally}, where, for equation \eqref{parabolic_intro_eq1} with time-independent coefficients, the authors established the $O(\varepsilon^{1/2})$ estimate of the operator exponential $e^{-t\mathcal{L}_\varepsilon}$ to its limit in $L^2$ for each $t\geq 1$. More recently, the rate in $L^2(\Omega_T)$, as well as the full-scale Lipschitz estimates, for systems with non-self-similar scales is discussed widely under strong smoothness assumptions on the coefficients in \cite{Geng2021_locally}. We also refer to the series of work \cite{Bench2004, Bench2005, Bench2007} for the qualitative pointwise convergence results of the same equation obtained by a probabilistic approach.

Under assumptions \eqref{parabolic_intro_cond_elliptic}--\eqref{parabolic_intro_cond_AC}, the coefficient $A(x, t; x/\varepsilon, t/\varepsilon^2)$ of system \eqref{parabolic_intro_eq1} is measurable on $\Omega_T$. We also assume that $f, g, h$ satisfy suitable conditions so that problem \eqref{parabolic_intro_eq1} admits a unique weak solution $u_\varepsilon$. As is well known, $u_\varepsilon$ converges to a function $u_0$ weakly in $L^2(0, T; H^1(\Omega))$ as $\varepsilon\rightarrow 0$, where $u_0$ is the solution of the homogenized problem given by
\begin{equation}\label{parabolic_intro_eq_u0}
\begin{cases}
  \partial_t u_0+\mathcal{L}_0 u_0 =f  &\mathrm{in}~\Omega\times (0, T),\\
  u_0=g &\mathrm{on}~\partial\Omega\times(0, T),\\
 u_0=h  & \mathrm{on}~ \Omega\times\{t=0\},
\end{cases}
\end{equation}
and $\mathcal{L}_0$ is a divergence type elliptic operator with variable coefficients determined by solving unit cell problems at each point of the domain (see Section \ref{parabolic_sec_corrector}).

Our main goal is to establish the optimal convergence rate of $u_\varepsilon$ to $u_0$. Convergence rate is a core subject of homogenization and has aroused much interest in the past few years. So far, various results about convergence rates have been gained for parabolic systems with time-independent or time-dependent periodic coefficients. The reader may consult \cite{Zhikov2006_parabolic, Meshkova2016_parabolic, Geng2017_Convergence,Xu2017_parabolic_elasticity, Niu2019_refined, Geng2021_locally} and their references. However, almost all these rates are in the sense of $L^2$. In this paper, we establish a scale-invariant result for system \eqref{parabolic_intro_eq1} in $L^2(0, T; L^{p_0}(\Omega))$ with $p_0=\frac{2d}{d-1}$ under quite general conditions.

\begin{theorem}\label{parabolic_conver_thm_conver}
  Let $\Omega$ be a bounded $C^{1, 1}$ domain in $\mathbb{R}^d$, $d\geq 2$ and $T>0$. Assume that $A$ satisfies  \eqref{parabolic_intro_cond_elliptic}--\eqref{parabolic_intro_cond_AC}. Let $u_\varepsilon$ and $u_0$ be the weak solutions to problems \eqref{parabolic_intro_eq1} and \eqref{parabolic_intro_eq_u0}, respectively. Suppose further $u_0\in L^2(0, T; W^{2, q_0}(\Omega))$, $\partial_t u_0\in L^2(0, T; L^{q_0}(\Omega))$ with $q_0:=\frac{2d}{d+1}$. Then
  \begin{align}\label{parabolic_conver_Lp_conver_delta}
    \|u_\varepsilon-u_0\|_{L^2(0, T; L^{p_0}(\Omega))}\leq C\varepsilon\{\|\nabla u_0\|_{L^2(0, T; \dot{W}^{1, q_0}(\Omega))}+\|\partial_t u_0\|_{L^2(0, T; L^{q_0}(\Omega))}\},
  \end{align}
  where $p_0:=\frac{2d}{d-1}$, $C$ depends only on $d, m, n, \mu, \Omega, A$ and
  \begin{align*}
    \|u\|_{\dot{W}^{1, q_0}(\Omega)}=\|u\|_{L^{\frac{d q_0}{d-q_0}}(\Omega)}+\|\nabla u\|_{L^{q_0}(\Omega)}.
  \end{align*}
\end{theorem}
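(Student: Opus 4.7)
The plan is to prove Theorem~\ref{parabolic_conver_thm_conver} by a duality argument against the adjoint homogenized problem, combined with a first-order two-scale expansion on both the primal and adjoint sides and the parabolic flux corrector from Section~\ref{parabolic_sec_corrector} together with the smoothing operator estimates from Section~\ref{parabolic_sec_four} that turn cellularly mean-zero quantities into $\varepsilon$-small divergences.

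First, since $p_0$ and $q_0$ are H\"older conjugate, it suffices to bound $\int_{\Omega_T}(u_\varepsilon-u_0)F\,dxdt$ uniformly over $F\in L^2(0,T;L^{q_0}(\Omega))$ with $\|F\|_{L^2(0,T;L^{q_0}(\Omega))}\le 1$. For such $F$ I solve the backward adjoint homogenized problem
\begin{equation*}
-\partial_t\phi+\mathcal{L}_0^*\phi=F\text{ in }\Omega_T,\qquad \phi=0\text{ on }\partial\Omega\times(0,T),\qquad \phi(\cdot,T)=0.
\end{equation*}
Since $\Omega$ is $C^{1,1}$ and $A_0(x,t)$ inherits enough regularity from $A$, standard parabolic $L^{q_0}$-theory yields $\|\phi\|_{L^2(0,T;W^{2,q_0}(\Omega))}+\|\partial_t\phi\|_{L^2(0,T;L^{q_0}(\Omega))}\le C\|F\|_{L^2(0,T;L^{q_0}(\Omega))}$. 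Using that $u_\varepsilon-u_0$ and $\phi$ have complementary zero traces and testing the primal and adjoint PDEs against each other gives the identity
\begin{equation*}
\int_{\Omega_T}(u_\varepsilon-u_0)F\,dxdt=\int_{\Omega_T}\big[A_0(x,t)-A(x,t;x/\varepsilon,t/\varepsilon^2)\big]\nabla u_\varepsilon\cdot\nabla\phi\,dxdt,
\end{equation*}
which is the starting point for all the error bookkeeping.

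On the right-hand side I then replace $\nabla u_\varepsilon$ by $(I+(\nabla_y\chi)^\varepsilon)S_\varepsilon(\eta_\varepsilon\nabla u_0)$ and $\nabla\phi$ by $(I+(\nabla_y\chi^*)^\varepsilon)S_\varepsilon(\eta_\varepsilon\nabla\phi)$, where $\chi,\chi^*$ are the primal and adjoint parabolic correctors, $S_\varepsilon$ is a space--time smoothing operator, and $\eta_\varepsilon$ is a smooth cutoff vanishing within distance $\varepsilon$ of the parabolic boundary $\partial_p\Omega_T$. The resulting leading term is a bilinear pairing whose kernel $A_0-A(I+\nabla_y\chi)$ has zero mean in the fast variables $(y,\tau)$, so I can apply the parabolic flux corrector $B=(B^{\alpha\beta}_{kij})$ to write that kernel in a divergence/antisymmetric form $\partial_{y_k}B^{\alpha\beta}_{kij}+\partial_\tau\Phi^{\alpha\beta}_{ij}$. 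One integration by parts moves a derivative onto one of the smoothed gradients, producing a Hessian controlled by $\|\nabla u_0\|_{\dot{W}^{1,q_0}(\Omega)}$ or $\|\nabla\phi\|_{W^{1,q_0}(\Omega)}$, with each derivative also picking up a factor of $\varepsilon$ from the rescaling. Derivatives hitting $\eta_\varepsilon$ leave lateral boundary layers and the $\partial_\tau\Phi$ term, once integrated by parts in $t$, leaves a temporal boundary layer at $t\in\{0,T\}$; combining all pieces via H\"older yields
\begin{equation*}
\Big|\int_{\Omega_T}(u_\varepsilon-u_0)F\,dxdt\Big|\le C\varepsilon\big(\|\nabla u_0\|_{L^2(0,T;\dot{W}^{1,q_0}(\Omega))}+\|\partial_t u_0\|_{L^2(0,T;L^{q_0}(\Omega))}\big)\|F\|_{L^2(0,T;L^{q_0}(\Omega))},
\end{equation*}
and the supremum over $F$ is exactly \eqref{parabolic_conver_Lp_conver_delta}.

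I anticipate three places where most of the work will go. First, the locally periodic dependence $A=A(x,t;y,\tau)$ produces extra commutator terms of the form $(\nabla_{x,t}\chi)^\varepsilon$ whose control requires the fine structure of the space $\mathscr{H}(\Omega_T;\bm{L^\infty})$ together with sharp smoothing-operator estimates. Second, the flux corrector $B$ must be constructed in the parabolic (self-similar) scaling so that the $\partial_\tau\Phi$ contribution carries the correct $\varepsilon$-weight after rescaling; this is the "new construction" advertised in the abstract and I expect it to be the most delicate technical point of the argument. Third, the temporal boundary layer at $t=0$ cannot be absorbed by a spatial-type cutoff without losing the scale-invariant rate, so it must be treated by the paper's dedicated sharp temporal boundary layer estimate, which is precisely where the hypothesis $\partial_t u_0\in L^2(0,T;L^{q_0}(\Omega))$ enters.
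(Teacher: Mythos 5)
Your starting identity is fine, but the central step of your plan --- ``replace $\nabla u_\varepsilon$ by $(I+(\nabla_y\chi)^\varepsilon)S_\varepsilon(\eta_\varepsilon\nabla u_0)$ and $\nabla\phi$ by $(I+(\nabla_y\chi^*)^\varepsilon)S_\varepsilon(\eta_\varepsilon\nabla\phi)$'' inside $\iint_{\Omega_T}[\widehat{A}-A^\varepsilon]\nabla u_\varepsilon\cdot\nabla\phi$ --- has a genuine gap, and as described it cannot produce the rate $O(\varepsilon)$. The substitution error on the primal side is exactly the $H^1$-error of the two-scale expansion of $u_\varepsilon$, which is only $O(\varepsilon^{1/2})$ in $L^2(\Omega_T)$ (this is Theorem \ref{parabolic_conver_thm_H1_1}, and the $\varepsilon^{1/2}$ is sharp because of boundary layers); pairing it with the $O(1)$ factor $[\widehat{A}-A^\varepsilon]\nabla\phi$ gives only $O(\varepsilon^{1/2})$. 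Worse, on the dual side $\phi$ is the \emph{homogenized} adjoint solution, so replacing $\nabla\phi$ by its oscillatory corrected version changes the integrand by an $O(1)$ amount, not a small one; that replacement approximates $\nabla v_\varepsilon$, not $\nabla\phi$. What is missing is the quadratic-in-errors structure of the duality argument: the paper solves the $\varepsilon$-dependent adjoint problem \eqref{parabolic_conver_eq_dual}, forms the corrected dual error $\varpi_\varepsilon$ in \eqref{parabolic_conver_def_varpi}, and writes $\iint w_\varepsilon\cdot F=Q_1+Q_2+Q_3$ with $w_\varepsilon$ as in \eqref{parabolic_conver_def_w}; the leading term $Q_1$ is the weak form of the $w_\varepsilon$-equation tested against $\varpi_\varepsilon$, so the $O(\varepsilon^{1/2})$ bound of Corollary \ref{parabolic_conver_coro_H1} multiplies $\|\nabla\varpi_\varepsilon\|_{L^2}=O(\varepsilon^{1/2})$ from Corollary \ref{parabolic_conver_coro_varpi}, and only this product of two half powers yields $O(\varepsilon)$. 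Your scheme never introduces $v_\varepsilon$ or any dual-side error estimate, so there is no mechanism to upgrade the $\varepsilon^{1/2}$ losses.

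Two further points you treat as routine are in fact where much of the paper's work lies. First, even after the flux-corrector integration by parts, the hardest remaining contribution is of the form $\iint[A-\widehat{A}]^\varepsilon(\nabla u_0-S_\varepsilon(\nabla u_0))\cdot[S_\varepsilon(\nabla_y\widetilde{\chi}^*K_\varepsilon(\nabla v_0))]^\varepsilon$ ($Q_{31}$ in the paper); since $\chi^*$ is a four-variable function with limited microscopic regularity one cannot rearrange the mixed norms, and the paper needs the regularity-lifting Lemma \ref{parabolic_conver_lem_u0-Su0} (solving an auxiliary cell problem to move $\nabla_y$ off $\chi^*$) to get the full factor $\varepsilon$; your ``one integration by parts'' sketch does not cover this. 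Second, in the locally periodic setting $(\nabla_y\chi)^\varepsilon$ need not even be measurable without the macroscopic smoothing $S_\varepsilon$ applied to $\chi$ itself, and the $W^{2,q_0}$--$L^{q_0}$ bound for the dual homogenized solution is not ``standard'' off the shelf: the paper must verify the $\mathrm{VMO}_x$ condition for $\widehat{A}$ under the hypothesis $A\in\mathscr{H}(\Omega_T;\bm{L^\infty})$ (Lemma \ref{parabolic_conver_lem_v0}) before invoking the $L^q$-$L^p$ theory.
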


Theorem \ref{parabolic_conver_thm_conver} extends the result of \cite{Niu2019_refined}, a similar error estimate for \eqref{parabolic_intro_eq_normal}, to the locally periodic setting. It is remarkable that estimate \eqref{parabolic_conver_Lp_conver_delta} is scale-invariant under the parabolic rescaling $u(x, t)\rightarrow \lambda^{-2}u(\lambda x, \lambda^2t)$ and the constant $C$ in \eqref{parabolic_conver_Lp_conver_delta} is independent of the size of $\Omega$. This estimate is more elegant than that of \cite{Niu2019_refined} given as (with $g=0$) $$\|u_\varepsilon-u_0\|_{L^2(0, T; L^{p_0}(\Omega))}\leq C\varepsilon\{\|u_0\|_{L^2(0, T; W^{2, q_0}(\Omega))}+\|f\|_{L^2(0, T; L^{q_0}(\Omega))}+\|h\|_{H^1(\Omega)}\},$$where the scale of $\|h\|_{H^1(\Omega)}$ does not coincide with the other terms when doing scaling. The earliest work on these kinds of scale-invariant results in homogenization should be attributed to Z. Shen who established the rate for elliptic systems with periodic coefficients in the noted book \cite{Shen2018_book}. Later in \cite{Xu2019_stratified,Xu2019_stratified2}, the scale-invariant error estimates were extended to elliptic systems with stratified coefficients $A(x, \rho(x)/\varepsilon)$ under rather general smoothness assumptions.

In one sense, the smoothness condition \eqref{parabolic_intro_cond_AC} means $A(x, t; y, \tau)$ is $1$-order differentiable in $x$ and $\frac{1}{2}$-order differentiable in $t$, which coincides with the regularity of general parabolic equations. Also the space $\mathscr{H}(\Omega_T; \bm{L^\infty})$ has the same scale as $L^\infty(\Omega_T\times \mathbb{T}^{d+1})$. From the viewpoint of calculations, this smoothness condition is minimum to guarantee the $O(\varepsilon)$ error estimate. 

Before describing the strategies and skills used in the paper, we introduce the notation
\begin{align}
  \label{notation_sup_ve}
  \phi^\varepsilon(x, t):=\phi(x, t; x/\varepsilon, t/\varepsilon^2),
\end{align}
which gives
\begin{align}\label{parabolic_intro_iden_composition}
  \begin{split}
    \partial_{x_i}(\phi^\varepsilon(x, t))&=(\partial_{x_i}\phi)^\varepsilon(x, t)+\varepsilon^{-1}(\partial_{y_i}\phi)^\varepsilon(x, t),\\
\partial_t(\phi^\varepsilon(x, t))&=(\partial_t\phi)^{\varepsilon}(x, t)+\varepsilon^{-2}(\partial_\tau\phi)^\varepsilon(x, t).
  \end{split}
\end{align}

The main difficulties of the paper are essentially caused by the feature of two scales. As seen in the formal asymptotic expansion
\begin{align*}
  u_\varepsilon(x, t)=u_0(x, t)+\varepsilon\chi_j(x, t; x/\varepsilon, t/\varepsilon^2)\partial_{j}u_0(x, t)+\cdots,
\end{align*}
the first-order term $\varepsilon\chi_j(x, t; x/\varepsilon, t/\varepsilon^2)\partial_{j}u_0(x, t)$ may not even be measurable on $\Omega_T$, as $\chi$ is not regular enough. To handle this problem, as in \cite{Xu2019_stratified2} for elliptic systems, we introduce the smoothing operator $S_\varepsilon$ w.r.t. macroscopic variables $(x, t)$. It makes $S_\varepsilon(g)$ smooth in $(x, t)$ for any $g(x, t; y, \tau)$ which is $1$-periodic in $(y, \tau)$, thereby ensuring the measurability of $[S_\varepsilon(g)]^\varepsilon(x, t)=S_\varepsilon(g)(x, t; x/\varepsilon, t/\varepsilon^2)$. Moreover, by Fubini's theorem this operator also helps us separate $(y, \tau)$ from $(x, t)$ in the coupled form $(x, t; x/\varepsilon, t/\varepsilon^2)$. On the other hand, since $S_\varepsilon$ is introduced to $g$ auxiliarily, it is necessary to control the difference $g-S_\varepsilon(g)$ (mostly in the case $g=A$). The idea is to write this difference into convolutions which act just like smoothing operators. In fact, it involves the differences in both space and time, namely,
\begin{align*}
  g-S_\varepsilon(g)=g-S^t_\varepsilon(g)+S^t_\varepsilon(g)-S_\varepsilon(g),
\end{align*}
where $S^t_\varepsilon$ is the smoothing operator w.r.t. $t$ only (see \eqref{parabolic_pre_def_Si}). The latter term is the difference in space and, by Poincar\'{e}'s inequality, it could be dominated by the ``convolution''
\begin{align*}
  \int_{B(x, \varepsilon)}|S^t_\varepsilon(\nabla_xg)(\omega, t; y, \tau)||\omega-x|^{1-d}d\omega.
\end{align*}
The first term can be written formally into
\begin{align*}
  g(x, t; y, \tau)-S^t_\varepsilon(g)(x, t; y, \tau)=-\varepsilon^2\int_0^1\int_{\mathbb{R}}g(x, \varsigma; y, \tau)\cdot\partial_t\varPhi_{\varepsilon^2\theta}(t-\varsigma)d\varsigma d\theta,
\end{align*}
where $\Phi_{\varepsilon^2\theta}$ is a proper kernel. Both of these two terms can be controlled well by the critical estimates established in Section \ref{parabolic_subsec_smoothing}.

It should be pointed out that, due to the feature of the coefficient, the rate in $L^2(0, T; L^{p_0}(\Omega))$ involves many subtle inhomogeneous $L^q_tL^p_xL^s_{\tau}L^r_y$-type estimates of four-variable functions. This urges us to perform each step accurately in the right format, and by this reason, the process is much more delicate than that of elliptic problems in \cite{Xu2019_stratified2}.

On the other hand, we introduce a new construction of flux correctors. In \cite{Geng2017_Convergence}, flux correctors were constructed in an elliptic manner by lifting the function $B_{ij}$ in both $y$ and $\tau$, which results in high regularity in $\tau$ but low regularity in $y$ (especially for $\mathfrak{B}_{(d+1)ij}$). It does not work for higher-order parabolic systems, as more regularity in $y$ is required. Later, in \cite{Niu2018_parabolic} when dealing with higher-order systems, flux correctors were constructed by lifting the regularity w.r.t. space only. This provides enough regularity in $y$, but no regularity in $\tau$. Neither of them is applicable to our setting, as the microscopic regularities involved are very subtle. To this end, we construct flux correctors in a parabolic manner, in which way the regularities of flux correctors are the same as and even better than correctors. This construction seems more natural and is also valid for higher-order systems.

Another novelty of the paper is a new estimate of temporal boundary layers. Compared to the method in \cite{Niu2019_refined}, it provides better estimates but requires no restriction on $g$ and $h$. The estimate is based on a sharp embedding result for $u_0$ and it improves the estimate used in \cite{Geng2020_nonself-similar}, where one half of the power of temporal layers was in fact lost.

We now describe the outline of the paper. Section \ref{parabolic_sec_pre} contains several parts of preliminaries. In Section \ref{parabolic_sec_four}, various vector-valued spaces of multi-variable functions are introduced, which are suitable tools to describe the properties of correctors and flux correctors. Section \ref{sec_homog-sobol} provides some homogeneous Sobolev spaces. Next in Sections \ref{parabolic_sec_corrector} and \ref{parabolic_sec_flux-correctors}, correctors $\chi$ and flux correctors $\mathfrak{B}$, together with their regularities, are studied. Section \ref{parabolic_subsec_smoothing} is devoted to a series of critical estimates for the smoothing operator $S_\varepsilon$. Afterwards, a sharp embedding result for $u_0$ is built in Section \ref{parabolic_sec_boundary-layers}, along with a corollary on the estimate of boundary layers.

These results are applied in Section \ref{parabolic_sec_conv-rates} to establish the $O(\varepsilon^{1/2})$ rate in $L^2(0, T; H^1(\Omega))$ and the $O(\varepsilon)$ rate in $L^2(0, T; L^{p_0}(\Omega))$. More precisely, the $O(\varepsilon^{1/2})$ estimate is established for the auxiliary function
\begin{align}\label{parabolic_intro_def_w_ve}
  \begin{split}
  w_\varepsilon&=u_\varepsilon-u_0-\varepsilon S_\varepsilon(S_\varepsilon(\chi) S_\varepsilon(\nabla u_0)\eta_\varepsilon)(x, t; \frac{x}{\varepsilon}, \frac{t}{\varepsilon^2})\\&\quad-\varepsilon^2\partial_{x_k}S_\varepsilon(S_\varepsilon(\mathfrak{B}_{(d+1)kj})S_\varepsilon(\partial_j u_0)\eta_\varepsilon)(x, t; \frac{x}{\varepsilon}, \frac{t}{\varepsilon^2}),
  \end{split}
\end{align}
where $\eta_\varepsilon$ is a cut-off function. Note that in this process those three $S_\varepsilon$ in the last two terms of \eqref{parabolic_intro_def_w_ve} play different roles: the first $S_\varepsilon$ is mainly used to maintain the measurability on $\Omega_T$ and control rapidly oscillating factors via Fubini's theorem; the second operator helps us to reduce the smoothness assumption on $A$ in $t$; and the third one reduces the regularities of $u_0$ at the cost of the power of $\varepsilon$.

To derive the rate of $u_\varepsilon$ to $u_0$ stated in Theorem \ref{parabolic_conver_thm_conver}, we adopt the classical duality argument, where the solution of the dual problem satisfies $L^q$-$L^p$ estimates in $\Omega_T$. The main challenge in this process lies in the term
\begin{align*}
\Big|\iint_{\Omega_T}[A-\widehat{A}]^\varepsilon\cdot(\nabla u_0-S_\varepsilon(\nabla u_0))\cdot[S_\varepsilon(\nabla_y\widetilde{\chi}^* K_\varepsilon(\nabla v_0))]^\varepsilon\Big|,
\end{align*}
where $\chi^*$ is the corrector of the dual problem. For this term, the technique used in \cite{Niu2019_refined} is no longer in force, since $\chi^*$ is now a four-variable function and it does not have enough regularity to rearrange arbitrarily the order of integrals in mixed norms. Instead, the idea is, formally speaking, to transfer the gradient in $\nabla_y\widetilde{\chi}^*$ to $u_0$ by integrating by parts, which is carried out by a regularity lifting argument (see Lemma \ref{parabolic_conver_lem_u0-Su0}).

Throughout this paper, unless otherwise stated, we will use $C$ to denote any positive constant which may depend on $d, m, \mu$. It should be understood that $C$ may differ from each other even in the same line. We also use the notation $\fint_E f:=(1/|E|)\int_E f$ for the integral average of $f$ over $E$.

\section{Preliminaries and correctors}\label{parabolic_sec_pre}
In this section, we introduce briefly some vector-valued function spaces with mixed norms for four-variable functions. Correctors and flux correctors are also introduced. 

\subsection{Multi-variable function spaces}
\label{parabolic_sec_four}

Recall that, a vector-valued function space is defined via the \emph{strong measurability}. Precisely, given a measure space $(E, \mu)$ and a Banach space $(B, \|\cdot\|_B)$, $L^p(E; B)$ is the space of \emph{strongly measurable} vector-valued functions from $E$ into $B$ satisfying $\|h\|_{L^p(E; B)}<\infty$, where
\begin{align*}
  \|h\|_{L^p(E; B)}:=
  \begin{cases}
    \Big(\int_E\|h\|_B^p~d\mu\Big)^{1/p}& \mathrm{if}~p<\infty,\\[0.2cm]
    \esssup\limits_E\{\|h\|_B\} &\mathrm{if}~p=\infty.
  \end{cases}
\end{align*}
Here the \emph{strong measurability} means that elements can be approximated almost everywhere by countably-valued functions. Following this framework, we introduce some vector-valued function spaces with mixed norms.

For $k\in \mathbb{N^+}$, we denote the $k$-dimensional torus of length $1$ by $\mathbb{T}^{k}$. Then functions of period $1$ on $\mathbb{R}^k$ may be regarded as functions on $\mathbb{T}^k$. Detailedly, $L^r(\mathbb{T}^{d+1})$ and $W^{1, r}(\mathbb{T}^{d+1})$ ($1\leq r\leq \infty$) are the subspaces of $L^r_{\mathrm{loc}}(\mathbb{R}^{d+1})$ and $W^{1, r}_{\mathrm{loc}}(\mathbb{R}^{d+1})$ whose elements are $1$-periodic, respectively. Moreover, $L_{\mathfrak{m}}^s(\mathbb{T}^1; L^r(\mathbb{T}^d))$ $(1\leq r, s\leq \infty)$ denotes the anisotropic space of $1$-periodic \emph{measurable} functions on $\mathbb{T}^{d+1}$ endowed with the norm $\|\cdot\|_{L^s(\mathbb{T}^1; L^r(\mathbb{T}^d))}$ (see Section 2.1 in \cite{Xu2019_stratified2}). Note that the main distinction between $L_{\mathfrak{m}}^s(\mathbb{T}^1; L^r(\mathbb{T}^d))$ and $L^s(\mathbb{T}^1; L^r(\mathbb{T}^d))$, the space of \emph{strongly measurable} vector-valued functions from $\mathbb{T}^1$ into $L^r(\mathbb{T}^d)$, concentrates on the measurability of elements, and these two spaces are equivalent if $r<\infty$. Furthermore, if $s=r$, $L_{\mathfrak{m}}^s(\mathbb{T}^1; L^r(\mathbb{T}^d))=L^r(\mathbb{T}^{d+1})$. For the sake of brevity, we may write
\begin{align}\label{parabolic_pre_notation_range}
  \bm{L^r}:=L^r(\mathbb{T}^{d+1}),\quad \bm{W^{1, r}}:=W^{1, r}(\mathbb{T}^{d+1}),\quad \bm{L^{s, r}}:=L_{\mathfrak{m}}^s(\mathbb{T}^1; L^r(\mathbb{T}^d)),
\end{align}
as the domain is invariant and the periodicity is always required. All of these spaces are equipped with the product measurability on $\mathbb{T}^{d+1}$ and will be the ranges of vector-valued function spaces in our study.

Now set $E=\Omega\times I$, where $I$ is a finite closed interval and $\Omega$ is an open set of $\mathbb{R}^{d}$ for the moment. Given $B$ as any space in \eqref{parabolic_pre_notation_range}, we discuss about vector-valued function spaces from $E$ into $B$ with mixed norms. For $1\leq p, q\leq \infty$, we say $h(x, t; y, \tau)$ belongs to $L^{q, p}(E; B)$ if $h$ is \emph{strongly measurable} from $E$ into $B$ and satisfies $\|h\|_{L^{q, p}(E; B)}<\infty$, where $$\|h\|_{L^{q, p}(E; B)}:=\|h\|_{L^q(I; L^p(\Omega; B))}.$$ Note that $L^{q, p}(E; B)$ is a Banach space under norm $\|\cdot\|_{L^{q, p}(E; B)}$ and, if $q=p$, $L^{q, p}(E; B)=L^p(E; B)$. We also point out that the elements of $L^{q, p}(E; B)$ are measurable w.r.t. the Lebesgue (product) measure on $E\times \mathbb{R}^{d+1}$. Moreover, if $h(x, t; y, \tau)\in L^{1}(E; \bm{L^\infty})$, then $h(x, t; \frac{x}{\varepsilon}, \frac{t}{\varepsilon^2})$ is a measurable function of $(x, t)$ on $E$, where we have regarded $h$ in $h(x, t; \frac{x}{\varepsilon}, \frac{t}{\varepsilon^2})$ to be its precise representative (see \cite{Xu2019_stratified2} for more details)
\begin{align*}
  h^*(x, t; y, \tau)=
  \begin{cases}
  \lim\limits_{r\rightarrow 0}\fint_{\{|(x', t'; y', \tau')-(x, t; y, \tau)|\leq r\}}h(x', t'; y', \tau')dx' dt' dy' d\tau'&\textrm{if the limit exists,}\\0&\textrm{otherwise}.
\end{cases}
\end{align*}

To keep presentations simple, we may use the following notations for multiple integrals of $h(x, t; y, \tau)$ on $E\times \mathbb{T}^{d+1}$:
\begin{align}\label{parabolic_pre_notation_multiple}
  \begin{split}
      \|h\|_{L^r_y}(x, t; \tau)&:=\|h(x, t; \cdot, \tau)\|_{L^r(\mathbb{T}^d)},\quad\quad\|h\|_{L^{s, r}_{\tau, y}}(x, t):=\|h(x, t; \cdot, \cdot)\|_{\bm{L^{s, r}}},\\
  \|h\|_{L^{p, s, r}_{x, \tau, y}(\Omega)}(t)&:=\|h(\cdot, t; \cdot, \cdot)\|_{L^p(\Omega; \bm{L^{s,r}})},\quad \|h\|_{L^{q, p, s, r}_{t, x, \tau, y}(E)}:=\|h\|_{L^{q, p}(E; \bm{L^{s, r}})}.
  \end{split}
\end{align}
In particular, if $h(x ,t; y, \tau)$ is independent of $(y, \tau)$, we have $$\|h\|_{L^{q, p}(E)}=\|h\|_{L^q(I; L^p(\Omega))}.$$

On the other hand, due to Fubini's theorem, we can define the weak derivative w.r.t. variable $x$ as a distribution. Indeed, if $h(x, t; y, \tau)\in L^1_{\mathrm{loc}}(E\times \mathbb{R}^{d+1})$, for a.e. $(t; y, \tau)$, we define $\nabla_x h(\cdot, t; y, \tau)$ as a distribution on $\Omega$ by
\begin{align*}
  \langle\nabla_xh(x, t; y, \tau), \phi(x)\rangle_\Omega=-\int_\Omega h(x, t; y, \tau)\nabla_x\phi(x)~dx\quad \mathrm{for}~\phi\in C_0^\infty(\Omega).
\end{align*}
Furthermore, for $0<\sigma<1$ and $1\leq q< \infty$, the fractional Sobolev-Slobodeckij space $W^{\sigma, q}(I; X)$ is defined to be the set of vector-valued function $h$ from $I$ into Banach space $X$ satisfying $\|h\|_{W^{\sigma, q}(I; X)}:=\|h\|_{L^q(I; X)}+[h]_{W^{\sigma, q}(I; X)}<\infty$, where
\begin{align}
  [h]_{W^{\sigma, q}(I; X)}:=\Big(\int_I\int_I\frac{\|h(t_1)-h(t_2)\|_X^q}{|t_1-t_2|^{1+\sigma q}}dt_1 dt_2\Big)^{\frac{1}{q}}.\label{parabolic_pre_def_seminorm}
\end{align}
$W^{\sigma, q}(I; X)$ is a Banach space under the norm $\|\cdot\|_{W^{\sigma, q}(I; X)}$.

Lastly, we say $h\in \mathscr{H}(E; B)$ if
\begin{align}
  \label{parabolic_pre_def_C_seminorm}
[h]_{\mathscr{H}(E; B)}:=[h]_{W^{\frac{1}{2}, 2}(I; L^\infty(\Omega; B))}+\|\nabla_x h\|_{L^{\infty, d}(E; B)}<\infty.
\end{align}
Note that $[\,\cdot\,]_{\mathscr{H}(E; B)}$ is a semi-norm. In a sense, $h\in\mathscr{H}(E; B)$ means that $h$ has $\frac{1}{2}$-order derivative in time and $1$-order derivative in space. If $h\in \mathscr{H}(E; \bm{L^\infty})$, then $h(x, t; \frac{x}{\varepsilon}, \frac{t}{\varepsilon^2})$ and $\nabla_x h(x, t; \frac{x}{\varepsilon}, \frac{t}{\varepsilon^2})$ are measurable functions on $E$ taking precise representatives into consideration.

\subsection{Homogeneous Sobolev spaces}
\label{sec_homog-sobol}
To make the estimates scale-invariant, we introduce the following homogeneous Sobolev spaces. Denote henceforth
\begin{align*}
  p^*:=\frac{dp}{d-p}\quad\mathrm{for}~1\leq p<d,\qquad  p':=\frac{p}{p-1}\quad \mathrm{for}~1\leq p\leq \infty. 
\end{align*}
For a domain $\Omega\subset\mathbb{R}^d$ and $1\leq p<d$, set
\begin{align*}
  \dot{W}^{1, p}(\Omega):=\{u\in L^{p^*}(\Omega): \|\nabla u\|_{L^p(\Omega)}<\infty\}
\end{align*}
endowed with the norm $\|u\|_{\dot{W}^{1, p}(\Omega)}=\|u\|_{L^{p^*}(\Omega)}+\|\nabla u\|_{L^p(\Omega)}$. For $1\leq p<\infty$, set
\begin{align*}
  \dot{W}^{1, p}_0(\Omega):=\textrm{the completion of }C_c^\infty(\Omega)\textrm{ under }\|\nabla u\|_{L^p(\Omega)}.
\end{align*}
Note that $\|\nabla u\|_{L^p(\Omega)}$ is a norm in $\dot{W}^{1, p}_0(\Omega)$ and $\dot{W}^{1, p}(\mathbb{R}^d)=\dot{W}^{1, p}_0(\mathbb{R}^d)$ for $1\leq p<d$. For $1<p\leq \infty$, we denote by $\dot{W}^{-1, p}(\Omega)$ the dual space of $\dot{W}^{1, p'}_0(\Omega)$. It is not hard to show that $F\in \dot{W}^{-1, p}(\Omega)$ can be written into $F=\sum_{j=1}^d\partial_{x_j}f_j$ for some $f_j\in L^p(\Omega)$. 

\subsection{Correctors and effective coefficients}\label{parabolic_sec_corrector}

For $1\leq \beta\leq m, 1\leq j\leq d$, set $P_j^\beta=P_j^\beta(y):=y_je^\beta$, where $e^\beta=(0, \dots, 1, \dots, 0)$ with $1$ in the $\beta$-th position. According to the qualitative results of homogenization in \cite{Bensoussan1978}, the matrix of correctors $\chi^\beta_j(x, t; y, \tau)=(\chi^{\gamma\beta}_j(x, t; y, \tau))$ is given by the following system
\begin{equation}\label{parabolic_pre_eq_chi}
\begin{cases}
  \partial_\tau\chi_j^\beta(x, t; y, \tau)+\mathcal{L}^{x, t}\chi^\beta_j(x, t; y, \tau) =-\mathcal{L}^{x, t}(P_j^\beta)\quad \mathrm{in}~ \mathbb{R}^{d+1},\\
\chi^\beta_j~\textrm{is $1$-periodic in}~(y, \tau)~\textrm{and}~\int_{\mathbb{T}^{d+1}}\chi^\beta_j(x, t; y, \tau)~dyd\tau=0~\textrm{for a.e. }(x, t)\in \Omega_T,
\end{cases}
\end{equation}
where
\begin{align*}
  \mathcal{L}^{x, t}:= -\frac{\partial }{\partial y_k}\Big[A_{kl}^{\alpha\gamma}(x, t; y, \tau) \frac{\partial }{\partial y_l}\cdot\Big],
\end{align*}
and $x, t$ play the role of parameters. By Fubini's theorem, equation \eqref{parabolic_pre_eq_chi} is well-posed for a.e. $(x, t)$ under assumptions \eqref{parabolic_intro_cond_periodic}--\eqref{parabolic_intro_cond_elliptic}.

On the other hand, we can introduce the matrix of correctors $\chi_j^{*\beta}$ for the operator $-\partial_t+\mathcal{L}_\varepsilon^*$, where $\mathcal{L}^*_\varepsilon$ is the adjoint operator of $\mathcal{L}_\varepsilon$, given by \eqref{parabolic_intro_exp_L} with $A$ replaced by its adjoint $A^*$. Then $\chi_j^{*\beta}$ solves the cell problem
\begin{align}\label{parabolic_pre_eq_chi_dual}
-\partial_\tau\chi_j^{*\beta}(x, t; y, \tau)+(\mathcal{L}^{x, t})^*\chi^{*\beta}_j(x, t; y, \tau)=-(\mathcal{L}^{x, t})^*P_j^\beta \quad  \mathrm{in}~\mathbb{T}^{d+1}.
\end{align}

The matrix of effective coefficients for $\partial_t+\mathcal{L}_\varepsilon$ is defined by \begin{align}\label{parabolic_pre_def_hatA}\widehat{A}^{\alpha\beta}_{ij}(x, t):=\iint_{\mathbb{T}^{d+1}}A^{\alpha\beta}_{ij}(x, t; y, \tau)+A^{\alpha\gamma}_{ik}(x, t; y, \tau)\partial_{y_k} \chi^{\gamma\beta}_j(x, t; y, \tau)~dyd\tau.\end{align}
One can verify that $\widehat{A}$ satisfies the ellipticity condition and $(\widehat{A})^*=\widehat{A^*}$, where $\widehat{A^*}$ is the matrix of effective coefficients for $-\partial_t+\mathcal{L}_\varepsilon^*$.

Inherited from $A$, we have the following estimates on $\chi$.

\begin{lemma}\label{parabolic_pre_lem_chi}
  Suppose that $A$ satisfies \eqref{parabolic_intro_cond_periodic}--\,\eqref{parabolic_intro_cond_elliptic}. Then
  \begin{itemize}
  \item [1).] there exists $\bar{q}>2$, depending only on $\mu$, such that, for a.e. $(x, t)\in \Omega_T$, $\chi(x, t; \cdot, \cdot)\in \mathbb{B}_1:=L^{\bar{q}}(\mathbb{T}^1; W^{1, \bar{q}}(\mathbb{T}^d))\cap L^\infty(\mathbb{T}^1; L^{\bar{q}}(\mathbb{T}^d))$, and it holds that
  \begin{align*}
    \|\chi(x, t)\|_{\mathbb{B}_1}\leq C,\quad \|\nabla_x\chi(x, t)\|_{\mathbb{B}_1}\leq C\|\nabla_xA(x, t)\|_{L^\infty(\mathbb{T}^{d+1})},
  \end{align*}
  and for a.e. $x_1, x_2\in \Omega$, $t_1, t_2\in [0, T]$,
  \begin{align*}
    \begin{split}
       \|\chi(x_1, t)-\chi(x_2, t)\|_{\mathbb{B}_1}\leq C\|A(x_1, t)-A(x_2, t)\|_{L^\infty(\mathbb{T}^{d+1})},\\ \|\chi(x, t_1)-\chi(x, t_2)\|_{\mathbb{B}_1}\leq C\|A(x, t_1)-A(x, t_2)\|_{L^\infty(\mathbb{T}^{d+1})},
    \end{split}
  \end{align*}
  where $C$ depends only on $\mu$;
\item [2).] furthermore, if in addition $A$ satisfies \eqref{parabolic_intro_cond_AC}, we have $\chi\in L^\infty(\Omega_T; \mathbb{B}_1)\cap \mathscr{H}(\Omega_T; \mathbb{B}_1)$ and
  \begin{align*}\|\chi\|_{L^\infty(\Omega_T; \mathbb{B}_1)}\leq C,\quad [\chi]_{\mathscr{H}(\Omega_T; \mathbb{B}_1)}\leq C[A]_{\mathscr{H}(\Omega_T; \bm{L^\infty})}.\end{align*}
    \end{itemize}
\end{lemma}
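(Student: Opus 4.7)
The plan is to treat the cell problem \eqref{parabolic_pre_eq_chi} as a parabolic system on $\mathbb{T}^{d+1}$ with $(x,t)$ frozen as parameters. For almost every $(x,t)\in\Omega_T$, the ellipticity and boundedness \eqref{parabolic_intro_cond_elliptic} hold for $A(x,t;\cdot,\cdot)$, so standard theory applies. First I would run the energy estimate: multiply \eqref{parabolic_pre_eq_chi} by $\chi^\beta_j$, integrate over $\mathbb{T}^{d+1}$, use ellipticity on the $\nabla_y\chi$ term, and the $\partial_\tau$ term disappears by periodicity. Combined with Poincar\'e on the torus (legitimate because of the normalization $\int_{\mathbb{T}^{d+1}}\chi\,dyd\tau=0$) this yields $\|\chi(x,t)\|_{L^2(\mathbb{T}^1; H^1(\mathbb{T}^d))}\leq C$. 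The sharper $L^\infty(\mathbb{T}^1; L^2(\mathbb{T}^d))$ bound comes from the usual parabolic trick of testing against $\chi\cdot\mathbf{1}_{[\tau_0,\tau_1]}$ (or using Steklov averaging) to get an energy identity up to any time slice; periodicity in $\tau$ then turns this into a uniform-in-$\tau$ bound.

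To upgrade from exponent $2$ to some $\bar q>2$, I would invoke a Meyers/Gehring-type self-improvement result for parabolic systems on the torus: the Caccioppoli inequality together with Gehring's lemma yields $\chi\in L^{\bar q}(\mathbb{T}^1; W^{1,\bar q}(\mathbb{T}^d))$ with $\bar q$ depending only on $\mu$. For the $L^\infty(\mathbb{T}^1; L^{\bar q}(\mathbb{T}^d))$ part, the parabolic Meyers estimate of Kinnunen--Lewis type (higher integrability up to the time slice) applied on the torus gives the claim. This produces the first displayed inequality of part (1).

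The Lipschitz-type dependence on $(x,t)$ follows by subtracting cell problems. Write $\delta\chi:=\chi(x_1,t;\cdot,\cdot)-\chi(x_2,t;\cdot,\cdot)$; it solves a cell problem of the same form but with the divergence-form source
\begin{equation*}
-\partial_{y_k}\bigl\{[A_{kl}(x_1,t)-A_{kl}(x_2,t)]\bigl(\partial_{y_l}\chi(x_2,t)+\partial_{y_l}P_j^\beta\bigr)\bigr\}.
\end{equation*}
Applying the same energy/Meyers estimate to $\delta\chi$ and using the bound on $\chi(x_2,t)$ already proved gives $\|\delta\chi\|_{\mathbb{B}_1}\leq C\|A(x_1,t)-A(x_2,t)\|_{\bm{L^\infty}}$. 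The $t_1,t_2$ difference is identical with one extra complication: the $\partial_\tau\delta\chi$ term is unchanged so the same test-function argument works. The bound on $\nabla_x\chi$ is obtained by formally differentiating \eqref{parabolic_pre_eq_chi} in $x_i$: $\nabla_{x_i}\chi$ satisfies a cell problem whose divergence-form RHS is controlled by $\|\nabla_x A(x,t)\|_{\bm{L^\infty}}$, and the same Meyers estimate closes the bound. (Rigorously one may first do difference quotients in $x$ and pass to the limit.)

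Part (2) is a bookkeeping step. Since the bounds in part (1) are pointwise in $(x,t)$ with constants depending only on $\mu$ (or on $\|\nabla_x A\|_{\bm{L^\infty}}$ and $\|A(x_1,t)-A(x_2,t)\|_{\bm{L^\infty}}$ respectively), I would integrate the $\nabla_x\chi$ estimate in $x$ with exponent $d$ and take ess-sup in $t$ to recover $\|\nabla_x\chi\|_{L^{\infty,d}(\Omega_T;\mathbb{B}_1)}\leq C\|\nabla_x A\|_{L^{\infty,d}(\Omega_T;\bm{L^\infty})}$, and feed the $t_1,t_2$ estimate into the semi-norm \eqref{parabolic_pre_def_seminorm} to obtain $[\chi]_{W^{1/2,2}(I;L^\infty(\Omega;\mathbb{B}_1))}\leq C[A]_{W^{1/2,2}(I;L^\infty(\Omega;\bm{L^\infty}))}$. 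Summing these two contributions yields $[\chi]_{\mathscr{H}(\Omega_T;\mathbb{B}_1)}\leq C[A]_{\mathscr{H}(\Omega_T;\bm{L^\infty})}$, while the uniform $L^\infty(\Omega_T;\mathbb{B}_1)$ bound is immediate from the first inequality of part (1).

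The main obstacle I anticipate is the $L^\infty(\mathbb{T}^1;L^{\bar q}(\mathbb{T}^d))$ component of $\mathbb{B}_1$: the parabolic Meyers argument naturally produces $L^{\bar q}$ in space-time, and enhancing this to an ess-sup-in-time estimate requires a careful Kinnunen--Lewis-style intrinsic cube/reverse H\"older argument on the torus. Everything else (energy estimates, differencing in parameters, integrating up to get $\mathscr{H}$) is fairly mechanical once the exponent $\bar q$ is produced.
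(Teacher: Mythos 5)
Your treatment of part (1) is essentially the paper's own route: the paper simply invokes Meyers-type estimates for parabolic systems together with the cell equation, and your energy estimate, Gehring/Meyers self-improvement, and perturbation-by-differences scheme (including difference quotients in $x$ for $\nabla_x\chi$) is the standard way to carry that out. Two small caveats there: the Poincar\'e step needs the observation that $\int_{\mathbb{T}^d}\chi(x,t;y,\tau)\,dy$ is independent of $\tau$ (integrate the equation in $y$), so that the overall normalization gives mean zero on each time slice; and Kinnunen--Lewis-type results concern higher integrability of the gradient, so the $L^\infty(\mathbb{T}^1;L^{\bar q}(\mathbb{T}^d))$ component is more naturally obtained from the cited Meyers-type estimates or by testing with $|\chi|^{\bar q-2}\chi$ for $\bar q-2$ small. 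These are repairable.

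The genuine gap is your dismissal of part (2) as ``bookkeeping.'' In this paper $L^\infty(\Omega_T;\mathbb{B}_1)$, $W^{\frac12,2}(0,T;L^\infty(\Omega;\mathbb{B}_1))$ and $L^{\infty,d}(\Omega_T;\mathbb{B}_1)$ are vector-valued spaces defined through \emph{strong} (Bochner) measurability, and $\mathbb{B}_1$ contains the non-separable factor $L^\infty(\mathbb{T}^1;L^{\bar q}(\mathbb{T}^d))$, so measurability of $(x,t)\mapsto\chi(x,t;\cdot,\cdot)$ and $(x,t)\mapsto\nabla_x\chi(x,t;\cdot,\cdot)$ as $\mathbb{B}_1$-valued maps is not automatic and cannot be skipped: without it the membership $\chi\in L^\infty(\Omega_T;\mathbb{B}_1)\cap\mathscr{H}(\Omega_T;\mathbb{B}_1)$ and the seminorms you propose to ``integrate up'' are not even well defined, and this measurability is exactly what is needed later for quantities such as $[S_\varepsilon(\widetilde{\chi}\,K_\varepsilon(\nabla u_0))]^\varepsilon$ to be measurable on $\Omega_T$. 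The paper states explicitly that this strong measurability is the main content of part (2) and proves it by an approximation argument (as in Lemma 2.4 and Corollary 2.2 of the stratified-coefficient reference): approximate $A$ almost everywhere by countably-valued $\bm{L^\infty}$-valued functions (possible since $A\in\mathscr{H}(\Omega_T;\bm{L^\infty})$ presupposes strong measurability), solve the cell problem for each value, and use your part (1) Lipschitz bounds $\|\chi(x_1,t)-\chi(x_2,t)\|_{\mathbb{B}_1}\le C\|A(x_1,t)-A(x_2,t)\|_{L^\infty(\mathbb{T}^{d+1})}$ (and the analogous continuity for the cell problem satisfied by $\nabla_x\chi$) to conclude that the corresponding countably-valued correctors converge a.e. to $\chi$ in $\mathbb{B}_1$. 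Once that is added, your integration of the two seminorm pieces does give the stated bounds.
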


\begin{proof}
The first part follows from Meyers-type estimate for parabolic systems in \cite{Campanato1980_parabolic,Armstrong2018_parabolic} together with the equation of $\chi$. The second part mainly asserts the \emph{strong measurability} of $\chi$ and $\nabla_x\chi$ from $\Omega_T$ into $\mathbb{B}_1$, which can be proved by approximating as Lemma 2.4 and Corollary 2.2 in \cite{Xu2019_stratified2}. We omit the details.
\end{proof}
\begin{corollary}\label{parabolic_pre_coro_hatA}
  Under assumptions \eqref{parabolic_intro_cond_elliptic}--\,\eqref{parabolic_intro_cond_AC}, we have $$\widehat{A}^{\alpha\beta}_{ij}(x, t)\in W^{\frac{1}{2}, 2}(0, T; L^\infty(\Omega))\cap L^\infty(0, T; W^{1, d}(\Omega)).$$
\end{corollary}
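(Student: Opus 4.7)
The plan is to differentiate $\widehat{A}$ under the integral sign and transfer all regularity from $A$ (assumption \eqref{parabolic_intro_cond_AC}) and $\chi$ (Lemma \ref{parabolic_pre_lem_chi}) to $\widehat{A}$ by elementary pointwise estimates followed by the appropriate mixed norms in $(x,t)$. Observe that, because $\nabla_y \chi(x,t;\cdot,\cdot) \in L^{\bar q}(\mathbb{T}^{d+1}) \subset L^1(\mathbb{T}^{d+1})$ uniformly in $(x,t)$ by Lemma \ref{parabolic_pre_lem_chi}.1, the identity \eqref{parabolic_pre_def_hatA} immediately gives $\|\widehat{A}\|_{L^\infty(\Omega_T)} \le C$, so it suffices to control $\nabla_x \widehat{A}$ in $L^\infty_t L^d_x$ and the $W^{1/2,2}_t L^\infty_x$ seminorm of $\widehat{A}$.

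For the spatial regularity, differentiating \eqref{parabolic_pre_def_hatA} in $x$ under the double integral (which is justified since $\nabla_x\chi(x,t;\cdot,\cdot) \in \mathbb{B}_1$ for a.e.\ $(x,t)$ by Lemma \ref{parabolic_pre_lem_chi}.1) yields
\begin{align*}
|\nabla_x \widehat{A}(x,t)| \le \iint_{\mathbb{T}^{d+1}} |\nabla_x A| + |\nabla_x A|\,|\nabla_y\chi| + |A|\,|\nabla_y \nabla_x \chi| \, dy\,d\tau.
\end{align*}
Bounding each term by H\"older's inequality, using $\|A\|_{L^\infty}\le 1/\mu$, $\|\nabla_y\chi(x,t)\|_{L^1(\mathbb{T}^{d+1})} \le C\|\chi(x,t)\|_{\mathbb{B}_1} \le C$, and the quantitative bound $\|\nabla_x\chi(x,t)\|_{\mathbb{B}_1} \le C\|\nabla_x A(x,t)\|_{L^\infty(\mathbb{T}^{d+1})}$ from Lemma \ref{parabolic_pre_lem_chi}.1, we obtain the pointwise estimate
\begin{align*}
|\nabla_x \widehat{A}(x,t)| \le C\,\|\nabla_x A(x,t;\cdot,\cdot)\|_{L^\infty(\mathbb{T}^{d+1})}.
\end{align*}
Taking the $L^d(\Omega)$ norm in $x$ and then the $L^\infty(0,T)$ norm in $t$ gives $\|\nabla_x \widehat{A}\|_{L^\infty(0,T;L^d(\Omega))} \le C\,\|\nabla_x A\|_{L^{\infty,d}(\Omega_T;\bm{L^\infty})} \le C[A]_{\mathscr{H}(\Omega_T;\bm{L^\infty})}$, which, combined with the $L^\infty$ bound on $\widehat{A}$ and the boundedness of $\Omega$, yields $\widehat{A}\in L^\infty(0,T;W^{1,d}(\Omega))$.

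For the temporal regularity, I would split
\begin{align*}
\widehat{A}(x,t_1) - \widehat{A}(x,t_2) &= \iint_{\mathbb{T}^{d+1}} [A(x,t_1) - A(x,t_2)](1 + \nabla_y\chi(x,t_1)) \\
&\quad + \iint_{\mathbb{T}^{d+1}} A(x,t_2)\bigl[\nabla_y\chi(x,t_1) - \nabla_y\chi(x,t_2)\bigr]\,dy\,d\tau,
\end{align*}
and apply H\"older with $\|\nabla_y\chi\|_{L^1(\mathbb{T}^{d+1})}\le C$ and the second continuity estimate of Lemma \ref{parabolic_pre_lem_chi}.1, $\|\chi(x,t_1)-\chi(x,t_2)\|_{\mathbb{B}_1} \le C\|A(x,t_1) - A(x,t_2)\|_{L^\infty(\mathbb{T}^{d+1})}$. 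This gives the pointwise-in-$x$ estimate $|\widehat{A}(x,t_1) - \widehat{A}(x,t_2)| \le C\|A(x,t_1) - A(x,t_2)\|_{L^\infty(\mathbb{T}^{d+1})}$, and taking $\sup_{x\in\Omega}$ yields
\begin{align*}
\|\widehat{A}(t_1) - \widehat{A}(t_2)\|_{L^\infty(\Omega)} \le C\,\|A(t_1) - A(t_2)\|_{L^\infty(\Omega;\bm{L^\infty})}.
\end{align*}
Inserting this into the Sobolev-Slobodeckij seminorm \eqref{parabolic_pre_def_seminorm} with $\sigma=1/2$, $q=2$ produces $[\widehat{A}]_{W^{1/2,2}(0,T;L^\infty(\Omega))} \le C\,[A]_{W^{1/2,2}(0,T;L^\infty(\Omega;\bm{L^\infty}))} \le C[A]_{\mathscr{H}(\Omega_T;\bm{L^\infty})}$, completing the proof. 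The only mildly delicate point is legitimizing the differentiation under the integral in the first step, which is standard once Lemma \ref{parabolic_pre_lem_chi} secures the required strong measurability of $\nabla_x\chi$ into $\mathbb{B}_1$.
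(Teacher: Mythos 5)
Your proof is correct and is exactly the argument the paper leaves implicit: the corollary is stated without proof as an immediate consequence of the definition \eqref{parabolic_pre_def_hatA} and the bounds in Lemma \ref{parabolic_pre_lem_chi}, which is precisely how you proceed (differentiation under the integral for the $L^\infty_t W^{1,d}_x$ part, and the $t$-continuity estimate for $\chi$ fed into the Sobolev--Slobodeckij seminorm for the $W^{\frac12,2}_t L^\infty_x$ part). No gaps worth noting beyond the measurability/differentiation-under-the-integral point you already flag, which Lemma \ref{parabolic_pre_lem_chi}.2 settles.
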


\subsection{Flux correctors}
\label{parabolic_sec_flux-correctors}

  In this subsection, we introduce flux correctors for $\partial_t+\mathcal{L}_\varepsilon$ in a new manner of parabolic type. Denote the indices ranging between $1$ and $d+1$ by underlined symbols, such as $\ubar{i}$. In other words, $\ubar{i}$ may equal $1, \dots, d+1$. As like in \cite{Geng2017_Convergence, Niu2019_refined}, for $1\leq j\leq d$, set
  \begin{align}\label{parabolic_def_B}B^{\alpha\beta}_{\ubar{i}j}(x, t; y, \tau):=\begin{cases}A^{\alpha\beta}_{\ubar{i}j}(x, t; y, \tau)+A^{\alpha\gamma}_{\ubar{i}k} \partial_{y_k}\chi_j^{\gamma\beta}(x, t; y, \tau)-\widehat{A}^{\alpha\beta}_{\ubar{i}j}(x, t), & \mathrm{if}~1\leq \ubar{i}\leq d,\\-\chi_{j}^{\alpha\beta}(x, t; y, \tau), &\mathrm{if}~\ubar{i}=d+1.\end{cases}\end{align}
  Then \begin{gather*}\|B_{\ubar{i}j}\|_{L^\infty(\Omega_T; \bm{L^{\bar{q}}})}\leq C,\quad [B_{\ubar{i}j}]_{\mathscr{H}(\Omega_T; \bm{L^{\bar{q}}})}\leq C[A]_{\mathscr{H}(\Omega_T; \bm{L^\infty})} \quad \textrm{for }1\leq \ubar{i}\leq d+1,\\\textrm{and further } \|\nabla_yB_{(d+1)j}\|_{L^\infty(\Omega_T; \bm{L^{\bar{q}}})}\leq C,\quad [\nabla_yB_{(d+1)j}]_{\mathscr{H}(\Omega_T; \bm{L^{\bar{q}}})}\leq C[A]_{\mathscr{H}(\Omega_T; \bm{L^\infty})}.\end{gather*}
\begin{lemma}\label{parabolic_pre_lem_fkB}
  There exist $\mathfrak{B}^{\alpha\beta}_{\ubar{k}\ubar{i}j}(x, t; y, \tau)$, $1\leq \ubar{i}, \ubar{k}\leq d+1$, $1\leq j\leq d$, $1\leq \alpha, \beta\leq m$, which are $1$-periodic in $(y, \tau)$, such that
  \begin{align*}
    \partial_{y_k} \mathfrak{B}^{\alpha\beta}_{k\ubar{i}j}(x, t; y, \tau)+\partial_\tau \mathfrak{B}^{\alpha\beta}_{(d+1)\ubar{i}j}(x, t; y, \tau)=B^{\alpha\beta}_{\ubar{i}j}(x, t; y, \tau) \quad\textrm{and}\quad \mathfrak{B}^{\alpha\beta}_{\ubar{k}\ubar{i}j}=- \mathfrak{B}^{\alpha\beta}_{\ubar{i}\ubar{k}j}.
  \end{align*}
  Furthermore, there exists a constant $C$, depending only on $\mu$, such that, for $1\leq j\leq d$,  \begin{align*}\|\mathfrak{B}_{\ubar{k}\ubar{i}j}\|_{L^\infty(\Omega_T; \mathbb{B}_1)}\leq C,\quad [\mathfrak{B}_{\ubar{k}\ubar{i}j}]_{\mathscr{H}(\Omega_T; \mathbb{B}_1)}\leq C[A]_{\mathscr{H}(\Omega_T; \bm{L^\infty})},\quad& \textrm{if } 1\leq \ubar{k}, \ubar{i}\leq d,\\    \|\mathfrak{B}_{\ubar{k}\ubar{i}j}\|_{L^\infty(\Omega_T; \mathbb{B}_2)}\leq C,\quad [\mathfrak{B}_{\ubar{k}\ubar{i}j}]_{\mathscr{H}(\Omega_T; \mathbb{B}_2)}\leq C[A]_{\mathscr{H}(\Omega_T; \bm{L^\infty})},\quad& \textrm{if } \ubar{k} \textrm{ or } \ubar{i} = d+1,
  \end{align*}
  where $\mathbb{B}_1$ is given in Lemma \ref{parabolic_pre_lem_chi} and $$\mathbb{B}_2:=L^{\bar{q}}(\mathbb{T}^1; W^{2, \bar{q}}(\mathbb{T}^d))\cap W^{1, \bar{q}}(\mathbb{T}^1; L^{\bar{q}}(\mathbb{T}^d)).$$
\end{lemma}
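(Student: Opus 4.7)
The plan is to build $\mathfrak{B}$ from two scalar potentials that each solve an \emph{elliptic} problem in $y$ on $\mathbb{T}^d$, treating $\tau$ (and the macroscopic variables $(x,t)$) only as parameters; this is what the author means by the ``parabolic manner'', since $\tau$ is kept at lower order in accordance with the parabolic scaling rather than being promoted to an extra spatial coordinate as in the elliptic construction of \cite{Geng2017_Convergence}. I would begin by extracting from \eqref{parabolic_pre_eq_chi} three compatibility facts to be used throughout: (i) $\partial_{y_k}B_{kj}+\partial_\tau B_{(d+1)j}=0$, obtained by applying $\partial_{y_k}$ to the corrector equation and recalling $B_{(d+1)j}=-\chi_j$; (ii) $\int_{\mathbb{T}^d}\chi_j(x,t;y,\tau)\,dy=0$ for a.e.\ $\tau$, by integrating the corrector equation in $y$ to get $\partial_\tau\int\chi_j\,dy=0$ combined with the zero $\tau$-mean; and (iii) $\iint_{\mathbb{T}^{d+1}}B_{\ubar{i}j}\,dy\,d\tau=0$ for $1\le\ubar{i}\le d$, immediate from \eqref{parabolic_pre_def_hatA}.

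Setting $\bar B_{\ubar{i}j}(x,t;\tau):=\int_{\mathbb{T}^d}B_{\ubar{i}j}\,dy$, I would introduce $w_j(x,t;\cdot,\tau)$ as the periodic mean-zero solution on $\mathbb{T}^d$ of $\Delta_y w_j=\chi_j$ (solvable by (ii)) and, for $1\le\ubar{i}\le d$, $G_{\ubar{i}j}(x,t;\cdot,\tau)$ as the periodic mean-zero solution on $\mathbb{T}^d$ of $\Delta_y G_{\ubar{i}j}=B_{\ubar{i}j}-\bar B_{\ubar{i}j}(\tau)$. Then I would define
\begin{align*}
\mathfrak{B}_{k\ubar{i}j}&:=\partial_{y_k}G_{\ubar{i}j}-\partial_{y_i}G_{kj}\quad\text{for }1\le k,\ubar{i}\le d,\\
\mathfrak{B}_{(d+1)\ubar{i}j}&:=\partial_{y_i}w_j+R_{\ubar{i}j}(x,t;\tau)\quad\text{for }1\le\ubar{i}\le d,
\end{align*}
and extend antisymmetrically by $\mathfrak{B}_{\ubar{i}(d+1)j}:=-\mathfrak{B}_{(d+1)\ubar{i}j}$ and $\mathfrak{B}_{(d+1)(d+1)j}:=0$, where $R_{\ubar{i}j}(x,t;\tau):=\int_0^\tau\bar B_{\ubar{i}j}(x,t;s)\,ds$ is $1$-periodic in $\tau$ by (iii). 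Antisymmetry in $(\ubar k,\ubar i)$ is then manifest.

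The divergence identity reduces to a short calculation. For $1\le\ubar{i}\le d$, direct differentiation gives
\begin{align*}
\partial_{y_k}\mathfrak{B}_{k\ubar{i}j}+\partial_\tau\mathfrak{B}_{(d+1)\ubar{i}j}=B_{\ubar{i}j}-\partial_{y_i}\bigl[\partial_{y_k}G_{kj}-\partial_\tau w_j\bigr],
\end{align*}
and it is fact (i) that kills the bracket: $\Delta_y\bigl[\partial_{y_k}G_{kj}-\partial_\tau w_j\bigr]=\partial_{y_k}B_{kj}-\partial_\tau\chi_j=0$, so the bracket is constant in $y$ for each $\tau$ and its $y$-gradient vanishes. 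For $\ubar{i}=d+1$ the identity collapses to $-\Delta_y w_j=-\chi_j=B_{(d+1)j}$ by construction.

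For the quantitative bounds, the $L^{\bar q}_\tau W^{1,\bar q}_y$ part of the $\mathbb{B}_1$-estimate on $\mathfrak{B}_{k\ubar{i}j}$ (with $k,\ubar{i}\le d$) follows at once from $B\in L^{\bar q}(\mathbb{T}^{d+1})$ and the two-derivative elliptic gain $G_{\ubar{i}j}\in L^{\bar q}_\tau W^{2,\bar q}_y$. The delicate $L^\infty_\tau L^{\bar q}_y$ part I would obtain by rewriting $\Delta_y G_{\ubar{i}j}=A_{\ubar{i}k}\partial_{y_k}\chi_j+(\text{terms not involving }\nabla_y\chi)$ and using that $\nabla_y\Delta_y^{-1}\partial_{y_k}$ is a Calder\'on--Zygmund operator bounded on $L^{\bar q}(\mathbb{T}^d)$; this converts a $y$-derivative of $\chi$ into no derivative of $G$, so $\|\nabla_y G_{\ubar{i}j}(\cdot,\tau)\|_{L^{\bar q}_y}\lesssim \|\chi_j(\cdot,\tau)\|_{L^{\bar q}_y}+C$, which is bounded in $\tau$ by the $L^\infty_\tau L^{\bar q}_y$ half of $\chi\in\mathbb{B}_1$ furnished by Lemma \ref{parabolic_pre_lem_chi}. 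For $\mathfrak{B}_{(d+1)\ubar{i}j}$ the $L^{\bar q}_\tau W^{2,\bar q}_y$ half uses $\chi_j\in L^{\bar q}_\tau W^{1,\bar q}_y$ and the three-derivative gain $w_j\in L^{\bar q}_\tau W^{3,\bar q}_y$, while the $W^{1,\bar q}_\tau L^{\bar q}_y$ half comes from $\Delta_y\partial_\tau w_j=\partial_\tau\chi_j=\partial_{y_k}B_{kj}$ (by (i) once more) followed by a further Calder\'on--Zygmund bound; the $R_{\ubar{i}j}$-contribution is trivial. The $L^\infty(\Omega_T;\cdot)$ norms and $\mathscr{H}(\Omega_T;\cdot)$ semi-norms transfer from $\chi$ and $A$ by linearity of the construction together with Lemma \ref{parabolic_pre_lem_chi}. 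I expect the main obstacle to be precisely the $L^\infty_\tau L^{\bar q}_y$ control: a direct elliptic estimate on $G_{\ubar{i}j}$ only delivers the $L^{\bar q}_\tau$ version, and it is the Calder\'on--Zygmund rewriting of the divergence-form data that lets one pass to the essential-sup in $\tau$; without it, the usual antisymmetrisation tricks fail to produce a flux corrector with the same microscopic regularity as $\chi$.
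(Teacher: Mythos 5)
Your construction is genuinely different from the paper's: you lift $B$ by \emph{elliptic} solves in $y$ at each frozen $\tau$ (i.e.\ $\Delta_y G_{\ubar{i}j}=B_{\ubar{i}j}-\bar B_{\ubar{i}j}$, $\Delta_y w_j=\chi_j$, plus the $\tau$-primitive $R_{\ubar{i}j}$), whereas the paper solves the cell heat equation $\partial_\tau f_{\ubar{i}j}-\Delta_y f_{\ubar{i}j}=B_{\ubar{i}j}$ on $\mathbb{T}^{d+1}$ and builds $\mathfrak{B}$ from $f$; this is what the author calls the ``parabolic manner'', and your scheme is essentially the space-only lifting of \cite{Niu2018_parabolic} that the introduction explicitly says is not adequate here. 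Your algebra is fine: facts (i)--(iii) are correct, the antisymmetry is manifest, and the divergence identity follows since the bracket $\partial_{y_k}G_{kj}-\partial_\tau w_j$ is $y$-harmonic and periodic, hence constant. The $\mathbb{B}_2$ bounds for the components with $\ubar k$ or $\ubar i=d+1$ also go through, because there the relation $\partial_\tau\chi_j=\partial_{y_k}B_{kj}$ supplies genuine divergence-form data and $\partial_{y_{\ubar i}}\Delta_y^{-1}\partial_{y_k}$ is a legitimate Calder\'on--Zygmund operator.

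The gap is exactly where you suspected it, and your proposed remedy does not close it. For $1\le \ubar k,\ubar i\le d$ you need $\mathfrak{B}_{\ubar k\ubar ij}=\partial_{y_{\ubar k}}G_{\ubar i j}-\partial_{y_{\ubar i}}G_{\ubar k j}\in L^\infty(\mathbb{T}^1;L^{\bar q}(\mathbb{T}^d))$, but the data $B_{\ubar ij}(\cdot,\tau)=A_{\ubar ij}+A_{\ubar ik}\partial_{y_k}\chi_j-\widehat A_{\ubar ij}$ is controlled only in an $L^{\bar q}_\tau$ sense (Lemma \ref{parabolic_pre_lem_chi} gives $\nabla_y\chi\in L^{\bar q}_\tau L^{\bar q}_y$, not $L^\infty_\tau L^{\bar q}_y$), and a fixed-$\tau$ elliptic solve cannot upgrade integrability in $\tau$. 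Your Calder\'on--Zygmund rewriting would require treating $A_{\ubar ik}\partial_{y_k}\chi_j$ as the $y$-divergence $\partial_{y_k}(A_{\ubar ik}\chi_j)$ so that $\nabla_y\Delta_y^{-1}\partial_{y_k}$ can absorb the derivative; but under \eqref{parabolic_intro_cond_elliptic}--\eqref{parabolic_intro_cond_AC} the coefficient $A$ is merely bounded measurable in the fast variables $(y,\tau)$, so $\partial_{y_k}A_{\ubar ik}$ does not exist and this commutation is not available (nor is multiplication by an $L^\infty$ function bounded on $W^{-1,\bar q}_y$, so a duality bound of $\|\nabla_yG(\cdot,\tau)\|_{L^{\bar q}_y}$ by $\|\chi(\cdot,\tau)\|_{L^{\bar q}_y}$ also fails). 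This is precisely the ``no regularity in $\tau$'' defect of the elliptic-in-$y$ construction that motivates the paper's approach: solving the parabolic cell problem yields $f_{\ubar ij}\in L^{\bar q}_\tau W^{2,\bar q}_y\cap W^{1,\bar q}_\tau L^{\bar q}_y$ (Meyers-type estimates), and it is this time regularity of $f$, obtained for free from the heat flow, that gives the $L^\infty_\tau L^{\bar q}_y$ control of $\nabla_y f$ and hence of $\mathfrak{B}_{\ubar k\ubar ij}$. To repair your argument you would essentially have to replace the frozen-$\tau$ solves by the parabolic lift, which is the paper's proof.
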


\begin{proof}
  To keep presentations simple, we suppress parameters $x, t$ and superscripts $\alpha, \beta$.

  For fixed $\ubar{i}, j$, observing that $B_{\ubar{i}j}(y, \tau)\in L^2(\mathbb{T}^{d+1})$ and $\int_{\mathbb{T}^{d+1}} B_{\ubar{i}j}(y, \tau) dyd\tau=0$, there exists $f_{\ubar{i}j}\in L^2(\mathbb{T}^1; H^2(\mathbb{T}^d))\cap H^1(\mathbb{T}^1; L^2(\mathbb{T}^d))$, solving the following parabolic problem in cell
  \begin{equation}\label{parabolic_flux-corr_eq_f}
    \begin{cases}
      \partial_\tau f_{\ubar{i}j}-\Delta_y f_{\ubar{i}j}=B_{\ubar{i}j}\quad \mathrm{in}~\mathbb{R}^{d+1},\\
     f_{\ubar{i}j} \textrm{ is 1-periodic in } (y, \tau),\textrm{ and }\int_{\mathbb{T}^{d+1}} f_{\ubar{i}j}=0,
    \end{cases}
  \end{equation}
where $\Delta_y=\sum_{i=1}^d\partial_{y_i}^2$. According to equations \eqref{parabolic_pre_eq_chi} and \eqref{parabolic_flux-corr_eq_f}, we know
\begin{align*}
  (\partial_\tau-\Delta_y)(\partial_if_{ij}+\partial_\tau f_{(d+1)j})=\partial_iB_{ij}+\partial_\tau B_{(d+1)j}=0\quad \mathrm{in~}\mathbb{R}^{d+1}.
\end{align*}
Therefore, $w=\partial_if_{ij}+\partial_\tau f_{(d+1)j}$ solves the parabolic problem
\begin{align*}
  \begin{cases}
    \partial_\tau w-\Delta_yw=0\quad\mathrm{in~}\mathbb{R}^{d+1},\\
  w \textrm{ is }1\textrm{-periodic in }(y, \tau) \textrm{ and }\int_{\mathbb{T}^{d+1}} w=0,
  \end{cases}
\end{align*}
which yields that
\begin{align}\label{parabolic_flux-corr_iden_f}
  \partial_if_{ij}+\partial_\tau f_{(d+1)j}=0.
\end{align}
For $1\leq j\leq d$, define
\begin{align*}\mathfrak{B}_{\ubar{k}\ubar{i}j}=
  \begin{cases}
    \partial_{\ubar{i}} f_{\ubar{k}j}-\partial_{\ubar{k}}f_{\ubar{i}j}, &\mathrm{if}~1\leq \ubar{k}, \ubar{i}\leq d,\\
    f_{\ubar{i}j}+\partial_{\ubar{i}}f_{(d+1)j}, &\mathrm{if}~\ubar{k}=d+1, 1\leq \ubar{i}\leq d,\\
    -f_{\ubar{k}j}-\partial_{\ubar{k}}f_{(d+1)j}, &\mathrm{if}~1\leq \ubar{k}\leq d, \ubar{i}=d+1,\\
    0, &\mathrm{if}~\ubar{k}=\ubar{i}=d+1.
  \end{cases}
\end{align*}
Obviously, $\mathfrak{B}_{\ubar{k}\ubar{i}j}=-\mathfrak{B}_{\ubar{i}\ubar{k}j}$.
Moreover, for $1\leq i, j\leq d$, by \eqref{parabolic_pre_eq_chi} and \eqref{parabolic_flux-corr_iden_f}, it is easy to verify that
\begin{align*} \partial_k\mathfrak{B}_{kij}+\partial_\tau\mathfrak{B}_{(d+1)ij}=\partial_k\partial_if_{kj}-\Delta_yf_{ij}+\partial_\tau f_{ij}+\partial_\tau\partial_i f_{(d+1)j}=B_{ij},
\end{align*}
and
\begin{align*}
  \partial_k\mathfrak{B}_{k(d+1)j}+\partial_\tau\mathfrak{B}_{(d+1)(d+1)j}=-\partial_kf_{kj}-\Delta_yf_{(d+1)j}=\partial_\tau f_{(d+1)j}-\Delta_yf_{(d+1)j}=B_{(d+1)j}.
\end{align*}

Lastly, the regularity estimates on $\mathfrak{B}$ follow from Meyers-type estimate for parabolic systems \cite{Campanato1980_parabolic,Armstrong2018_parabolic} as well as Lemma \ref{parabolic_pre_lem_chi} and Corollary \ref{parabolic_pre_coro_hatA}. We omit the details.
\end{proof}

\section{Smoothing operators and boundary layers}
\label{parabolic_sec_macr-smooth}

In this section, we establish several estimates for the macroscopic smoothing operators $S_\varepsilon$ in order to deal with the oscillating functions of the form $\phi(x, t; x/\varepsilon, t/\varepsilon^2)$ derived from multi-variable function $\phi(x, t; y, \tau)$ which is $1$-periodic in $(y, \tau)$. We also provide a result on temporal boundary layers. 

\subsection{Macroscopic smoothing operators for multi-variable functions}\label{parabolic_subsec_smoothing}

Suppose $\varphi_1\in C_0^\infty((-1/2, 1/2)), \varphi_2\in C_0^\infty(B(0, 1/2))$, where $B(0, 1/2)$ is the ball in $\mathbb{R}^d$ of radius $1/2$ centered at the origin $0$, such that
\begin{align}\label{parabolic_pre_smooth_cond_unit}\varphi_i\geq 0,~i=1,2\quad \mathrm{and}\quad \int_{\mathbb{R}}\varphi_1(s)~ds=1,\quad \int_{\mathbb{R}^d}\varphi_2(z)~dz=1.\end{align}For $\varepsilon>0$, let $\varphi_{1, \varepsilon}(s)=\varepsilon^{-2}\varphi_1(s/\varepsilon^2)$, $\varphi_{2, \varepsilon}(z)=\varepsilon^{-d}\varphi_2(z/\varepsilon)$. For a multi-variable function $\phi(x, t; y, \tau)$ on $\mathbb{R}^{d+1}\times \mathbb{R}^{d+1}$, set
\begin{align}
  \label{parabolic_pre_def_Si}
  \begin{split}
    S^t_\varepsilon(\phi)(x, t; y, \tau)&:=\int_{\mathbb{R}}\phi(x, t-s; y, \tau)\varphi_{1, \varepsilon}(s) ds,\\
S^x_\varepsilon(\phi)(x, t; y, \tau)&:=\int_{\mathbb{R}^d}\phi(x-z, t; y, \tau)\varphi_{2, \varepsilon}(z) dz,
  \end{split}
\end{align}
which are smoothing operators w.r.t. $t$ and $x$, respectively. In the following, we use the abbreviation
\begin{align*}
S_\varepsilon:=S^t_\varepsilon\circ S^x_\varepsilon=S^x_\varepsilon\circ S^t_\varepsilon,
\end{align*}
and more briefly,
\begin{align}\label{parabolic_conver_def_tilde}\widetilde{\phi}:=S_\varepsilon(\phi).\end{align}
Furthermore, we may omit the subscript $\varepsilon$ when $\varepsilon=1$. We remark that $S_\varepsilon$ is commutative with all the partial derivatives w.r.t. $x$, $t$, $y$ and $\tau$. 

Before stating the properties of $S_\varepsilon$, we introduce the notations
\begin{align*}
  \Omega_\delta:=\{x\in \Omega: \textrm{dist}(x, \partial\Omega)<\delta\}
\end{align*}
for the spatial boundary layer, and
\begin{align*}
    \Omega_{T}^{k, \varepsilon}:=[\Omega_{k\varepsilon}\times(0, T)]\cup[\Omega\times(0, k\varepsilon^2)]\cup[\Omega\times(T-k\varepsilon^2, T)]
\end{align*}
for the spatiotemporal boundary layer in $\Omega_T$, where $\delta, k>0$.

\begin{lemma}\label{parabolic_pre_smooth_lem_1}
  Let $T>0$ and $h(\tau): \mathbb{R}\rightarrow \mathbb{R}$ be $1$-periodic. Then for any $\varepsilon\leq \sqrt{T}$,
    \begin{align}\label{parabolic_conver_es_smoothing_h}
    \int_0^T|h(\frac{t}{\varepsilon^2})|dt\leq CT\int_{\mathbb{T}^1}|h(\tau)|d\tau,
  \end{align}
  where $C$ is a constant.

  Moreover, suppose $g(t; \tau)\in L^{q_1}(\mathbb{R}; L^{s_1}(\mathbb{T}^1))$, $f(t, \tau)\in L^{q_2}(\mathbb{R}; L^{s_2}(\mathbb{T}^1))$ with $1\leq q<\infty$, $\frac{1}{q_1}+\frac{1}{q_2}=\frac{1}{s_1}+\frac{1}{s_2}=\frac{1}{q}$. We have for $\varepsilon>0$,
  \begin{align}
    \label{parabolic_conver_es_smoothing_gf}
    \|[S^t_\varepsilon(g)\cdot S^t_\varepsilon(f)]^{\varepsilon}\|_{L^q(\varepsilon^2, T-\varepsilon^2)}\leq C\|g\|_{L^{q_1}(0, T; L^{s_1}(\mathbb{T}^1))}\|f\|_{L^{q_2}(0, T; L^{s_2}(\mathbb{T}^1))}.
  \end{align}
\end{lemma}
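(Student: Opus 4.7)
For part (1), change variables $\tau = t/\varepsilon^2$ to rewrite $\int_0^T|h(t/\varepsilon^2)|\,dt = \varepsilon^2\int_0^{T/\varepsilon^2}|h(\tau)|\,d\tau$. The hypothesis $\varepsilon \le \sqrt{T}$ forces $T/\varepsilon^2 \geq 1$, so setting $N = \lfloor T/\varepsilon^2\rfloor \geq 1$ and using $1$-periodicity yields $\int_0^{T/\varepsilon^2}|h|\,d\tau \leq (N+1)\int_{\mathbb{T}^1}|h| \leq 2N\int_{\mathbb{T}^1}|h|$; multiplying by $\varepsilon^2$ gives \eqref{parabolic_conver_es_smoothing_h} with $C=2$.

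For part (2), the strategy is to partition $(\varepsilon^2,T-\varepsilon^2)$ into consecutive subintervals $J_k = (k\varepsilon^2,(k+1)\varepsilon^2)$ of length $\varepsilon^2$, which is precisely the scale on which $\tau = t/\varepsilon^2$ traces a full period of $\mathbb{T}^1$ and on which $\varphi_{1,\varepsilon}$ averages. On each $J_k$, the bounds $\|\varphi_{1,\varepsilon}\|_\infty \leq C\varepsilon^{-2}$ and $\mathrm{supp}(\varphi_{1,\varepsilon})\subset(-\varepsilon^2/2,\varepsilon^2/2)$ give
\[
|S^t_\varepsilon(g)(t;\tau)| \leq C\varepsilon^{-2}\int_{I_k}|g(u;\tau)|\,du, \qquad I_k := ((k-\tfrac12)\varepsilon^2,(k+\tfrac32)\varepsilon^2),
\]
and similarly for $f$, uniformly in $t\in J_k$ and in $\tau$.

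Next, I would change variables $t = (k+\sigma)\varepsilon^2$ so that $\tau = t/\varepsilon^2 \equiv \sigma \pmod 1$, reducing $\int_{J_k}|\,\cdot\,|^q\,dt$ to an integral in $\sigma\in(0,1)$, and then chain three H\"older-type inequalities in a carefully chosen order. First, H\"older in $\sigma$ with conjugate exponents $s_1/q$ and $s_2/q$ (legitimate because $1/s_1+1/s_2=1/q$ forces $s_1,s_2\geq q$) separates the $g$- and $f$-factors into $(\int_0^1 A^{s_1}\,d\sigma)^{q/s_1}(\int_0^1 B^{s_2}\,d\sigma)^{q/s_2}$, where $A(\sigma)=\int_{I_k}|g(u;\sigma)|\,du$ and $B(\sigma)=\int_{I_k}|f(u;\sigma)|\,du$. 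Second, Minkowski's integral inequality swaps the inner $u$-integral with the $L^{s_1}_\sigma$-norm, producing $\int_{I_k}\|g(u;\cdot)\|_{L^{s_1}(\mathbb{T}^1)}\,du$ and likewise for $f$. Third, H\"older in $u$ with exponent $q_1$ (respectively $q_2$) lifts this to $\varepsilon^{2(1-1/q_1)}\|g\|_{L^{q_1}(I_k;L^{s_1})}$ (respectively $\varepsilon^{2(1-1/q_2)}\|f\|_{L^{q_2}(I_k;L^{s_2})}$); the identity $1/q_1+1/q_2 = 1/q$ makes all accumulated powers of $\varepsilon$ cancel exactly.

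Summing over $k$ is a fourth H\"older inequality for sums with exponents $q_1/q$ and $q_2/q$, combined with the fact that the enlarged intervals $I_k$ have finite overlap, which converts the discrete sum back into $\|g\|^q_{L^{q_1}(0,T;L^{s_1})}\|f\|^q_{L^{q_2}(0,T;L^{s_2})}$ and delivers \eqref{parabolic_conver_es_smoothing_gf} after taking $q$-th roots. The main obstacle is keeping the four H\"older/Minkowski steps in the correct order: doing the $u$-H\"older before Minkowski's swap, for example, would yield $L^{q_i}(\mathbb{T}^1)$ norms in $\tau$ rather than the desired $L^{s_i}(\mathbb{T}^1)$ norms, and the symmetric assumption $1/q_1+1/q_2 = 1/s_1+1/s_2 = 1/q$ is precisely what makes the four-exponent bookkeeping close up cleanly.
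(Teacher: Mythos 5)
Your proof is correct, and it reaches the estimate by a different organization than the paper. The paper first reduces to $\varepsilon=1$ by rescaling (after multiplying $g,f$ by indicators of $(0,T)$ so it can work on all of $\mathbb{R}$), then tensorizes the two mollifications into a single operator acting on $G(t_1,t_2;\tau)=g(t_1;\tau)f(t_2;\tau)$, applies Jensen's inequality to bring the $q$-th power inside the convolution kernels, invokes \eqref{parabolic_conver_es_smoothing_h} to convert the diagonal $t$-integration over unit intervals into the $L^q(\mathbb{T}^1)$ norm in the periodic slot, and finishes with H\"older in $\tau$ (exponents $s_1,s_2$) and in $(\varsigma_1,\varsigma_2)$ (exponents $q_1,q_2$). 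You instead stay at scale $\varepsilon$, decompose $(\varepsilon^2,T-\varepsilon^2)$ into blocks of length $\varepsilon^2$, replace each mollification by the crude bound $C\varepsilon^{-2}\int_{I_k}|g(u;\tau)|\,du$, and then chain H\"older in $\sigma$, Minkowski, H\"older in $u$, and H\"older for the sum over $k$ using the finite overlap of the $I_k$; your power count $-4q+2+2q(1-1/q_1)+2q(1-1/q_2)=0$ is exactly right. The underlying mechanism is the same in both arguments --- the mollification scale $\varepsilon^2$ matches the period of the fast variable, and the identities $\frac{1}{q_1}+\frac{1}{q_2}=\frac{1}{s_1}+\frac{1}{s_2}=\frac{1}{q}$ close the bookkeeping --- so they prove the same estimate with comparable effort; the paper's tensorization is slightly slicker (no block summation, and it recycles part one of the lemma as a black box), while your block decomposition is more elementary and makes the cancellation of $\varepsilon$-powers completely explicit. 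The only items to tidy are routine boundary details you gloss over: the last block of $(\varepsilon^2,T-\varepsilon^2)$ may be shorter than $\varepsilon^2$, and one should observe that for the admissible $k$ the enlarged intervals $I_k$ remain inside $(0,T)$ (indeed $(k+\tfrac32)\varepsilon^2\le T-\tfrac12\varepsilon^2$ and $(k-\tfrac12)\varepsilon^2\ge\tfrac12\varepsilon^2$), which is what justifies ending with norms over $(0,T)$.
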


\begin{proof}
  Estimate \eqref{parabolic_conver_es_smoothing_h} follows from the change of variables. For \eqref{parabolic_conver_es_smoothing_gf}, by considering $g\cdot \mathbf{1}_{(0, T)\times \mathbb{R}}$ and $f\cdot \mathbf{1}_{(0, T)\times \mathbb{R}}$ instead, it is sufficient to prove the whole space case. By rescaling, we also suppose that $\varepsilon=1$ and omit the subscript $\varepsilon$.

Set $G(t_1, t_2; \tau)=g(t_1; \tau)\cdot f(t_2; \tau)$ and denote
  \begin{align}\label{notation_tense}
    S^{t_1}\otimes S^{t_2}(G)(t, \tau):=\int_{\mathbb{R}}\int_{\mathbb{R}}G(\varsigma_1, \varsigma_2; \tau)\varphi_1(t-\varsigma_1)\varphi_1(t-\varsigma_2)d\varsigma_2 d\varsigma_1. 
  \end{align}
  Then $S^t(g)(t; \tau)\cdot S^t(f)(t; \tau)=S^{t_1}\otimes S^{t_2}(G)(t; \tau)$. By Fubini's theorem and H\"{o}lder's inequality,
  \begin{align}\label{parabolic_conver_smoothing_es_H}
    \begin{split}
    \int_{\mathbb{R}}|S^{t_1}\otimes S^{t_2}(G)(t; t)|^qdt&\leq C\int_{\mathbb{R}}\fint_{|\varsigma_1-t|\leq \frac{1}{2}}\fint_{|\varsigma_2-t|\leq \frac{1}{2}}|G(\varsigma_1, \varsigma_2; t)|^qd\varsigma_2 d\varsigma_1 dt\\&\leq C\int_{\mathbb{R}}\int_{|\varsigma_1-\varsigma_2|\leq 1}\int_{|\varsigma_2-t|\leq \frac{1}{2}}|G(\varsigma_1, \varsigma_2; t)|^qdtd\varsigma_1 d\varsigma_2\\&\leq C\int_{\mathbb{R}}\int_{|\varsigma_1-\varsigma_2|\leq 1}\|G(\varsigma_1, \varsigma_2; \cdot)\|_{L^{q}(\mathbb{T}^1)}^qd\varsigma_1d\varsigma_2\\&\leq C\|g\|_{L^{q_1}(\mathbb{R}; L^{s_1}(\mathbb{T}^1))}^q\|f\|_{L^{q_2}(\mathbb{R}; L^{s_2}(\mathbb{T}^1))}^q, 
  \end{split}
  \end{align}
  where we have used \eqref{parabolic_conver_es_smoothing_h} in the third inequality and H\"{o}lder's inequality in the last step. The proof is completed. 
  \end{proof}

\begin{lemma}\label{parabolic_pre_smooth_lem_2}
  Let $\Omega$ be a Lipschitz domain in $\mathbb{R}^d$, $T>0$ and $1\leq p, q<\infty$. Suppose $g\in L^{q_1, p_1}(\mathbb{R}^{d+1}; \bm{L^{s_1, r_1}})$, $f\in L^{q_2, p_2}(\mathbb{R}^{d+1}; \bm{L^{s_2, r_2}})$ with $\frac{1}{p_1}+\frac{1}{p_2}=\frac{1}{r_1}+\frac{1}{r_2}=\frac{1}{p}$, $\frac{1}{q_1}+\frac{1}{q_2}=\frac{1}{q}$. Let $\frac{1}{s}=\frac{1}{s_1}+\frac{1}{s_2}$. If $s\geq \max\{p, q\}$, then for $\varepsilon>0$,
  $$\|[S_\varepsilon(g)\cdot S_{\varepsilon}(f)]^\varepsilon\|_{L^{q, p}(\Omega_T\setminus\Omega_T^{1, \varepsilon})}\leq C\|g\|_{L^{q_1, p_1}(\Omega_T; \bm{L^{s_1, r_1}})}\|f\|_{L^{q_2, p_2}(\Omega_T; \bm{L^{s_2, r_2}})},$$
  where $C$ depends only on $d$.

  Specially, if $f\equiv 1$ on $\Omega_T$ and $s\geq \max\{p, q\}$, we have
  $$\|[S_\varepsilon(g)]^\varepsilon\|_{L^{q, p}(\Omega_T\setminus\Omega_T^{1, \varepsilon})}\leq C\|g\|_{L^{q, p}(\Omega_T; \bm{L^{s, p}})}.$$
\end{lemma}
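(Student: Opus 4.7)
The plan is to mimic and extend the tensor-operator argument of \eqref{parabolic_conver_smoothing_es_H} from Lemma \ref{parabolic_pre_smooth_lem_1} to the full spatio-temporal setting with mixed $L^{q, p}$-norms.

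First, I would reduce to $\varepsilon = 1$ by the rescaling $(x, t) \mapsto (\varepsilon x, \varepsilon^2 t)$ and extend $g, f$ by zero outside $\Omega_T$; the condition $(x, t) \in \Omega_T \setminus \Omega_T^{1, \varepsilon}$ keeps the supports of $\varphi_{2, \varepsilon}(x - \cdot)\varphi_{1, \varepsilon}(t - \cdot)$ inside $\Omega_T$, so this extension does not affect $S_\varepsilon(g)$ or $S_\varepsilon(f)$ on the region of interest. I would then set $G(\omega_1, \varsigma_1, \omega_2, \varsigma_2; y, \tau) := g(\omega_1, \varsigma_1; y, \tau)\, f(\omega_2, \varsigma_2; y, \tau)$ and
\[
\Phi := \varphi_2(x - \omega_1)\varphi_1(t - \varsigma_1)\varphi_2(x - \omega_2)\varphi_1(t - \varsigma_2),
\]
which is a probability density supported in $|\omega_i - x| \leq 1/2$ and $|\varsigma_i - t| \leq 1/2$, so that $(Sg)(x, t; y, \tau)(Sf)(x, t; y, \tau) = \iiiint G\,\Phi\, d\omega\, d\varsigma$.

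Next, I would apply Jensen's inequality with exponent $p$ and evaluate at $(y, \tau) = (x, t)$, giving $|H(x, t)|^p \leq \iiiint |g(\omega_1, \varsigma_1; x, t)|^p |f(\omega_2, \varsigma_2; x, t)|^p\,\Phi\, d\omega\, d\varsigma$. Integrating in $x$, the supports of $\varphi_2(x - \omega_i)$ force $|\omega_1 - \omega_2| \leq 1$ and restrict $x$ to a unit ball; combining periodicity in the $y$-slot (which coincides with the integration variable $x$) with Hölder's inequality $p/r_1 + p/r_2 = 1$ bounds the $x$-integral by
\[
C\,\mathbf{1}_{|\omega_1 - \omega_2| \leq 1}\|g(\omega_1, \varsigma_1; \cdot, t)\|_{L^{r_1}(\mathbb{T}^d)}^p \|f(\omega_2, \varsigma_2; \cdot, t)\|_{L^{r_2}(\mathbb{T}^d)}^p.
\]
The analogous procedure in $t$ — raise to the power $q/p$, apply Jensen again to absorb this exponent into the probability measures $\varphi_1(t-\varsigma_i)\,d\varsigma_i$, then reorder by Fubini so that the $t$-integration for fixed $\varsigma_i$ runs over a unit interval of a function periodic in the $\tau$-slot — produces an $L^q(\mathbb{T}^1)$-norm of the product in $\tau$. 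Hölder with $1/s_1 + 1/s_2 = 1/s$, together with the embeddings $L^s(\mathbb{T}^1) \hookrightarrow L^q(\mathbb{T}^1)$ and $L^s(\mathbb{T}^d) \hookrightarrow L^p(\mathbb{T}^d)$ (using $|\mathbb{T}^d| = |\mathbb{T}^1| = 1$, which is exactly where the hypothesis $s \geq \max\{p, q\}$ enters), then splits this as $\|g(\omega_1, \varsigma_1; \cdot, \cdot)\|_{\bm{L^{s_1, r_1}}}^q \|f(\omega_2, \varsigma_2; \cdot, \cdot)\|_{\bm{L^{s_2, r_2}}}^q$.

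Finally, I would close the estimate by applying Young's convolution inequality (against the kernel $\mathbf{1}_{|\omega_1 - \omega_2| \leq 1}\mathbf{1}_{|\varsigma_1 - \varsigma_2| \leq 1}$) together with Hölder in $\omega$ using $1/p_1 + 1/p_2 = 1/p$ and in $\varsigma$ using $1/q_1 + 1/q_2 = 1/q$, obtaining the bound by $\|g\|_{L^{q_1, p_1}(\Omega_T; \bm{L^{s_1, r_1}})}\|f\|_{L^{q_2, p_2}(\Omega_T; \bm{L^{s_2, r_2}})}$. The special case $f \equiv 1$ then follows by taking $p_2 = q_2 = r_2 = s_2 = \infty$. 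The main obstacle I expect is the delicate bookkeeping of the four mixed norms $L^{q_i}_t L^{p_i}_\omega L^{s_i}_\tau L^{r_i}_y$: the argument above naturally outputs them in a permuted order, and the rearrangement into the target order requires Minkowski's integral inequality applied only in those slots where the exponent comparisons permit it — it is precisely the hypothesis $s \geq \max\{p, q\}$ that makes the embeddings $L^s \hookrightarrow L^p$ and $L^s \hookrightarrow L^q$ on the unit torus available and thus allows the periodic Hölder separation to go through.
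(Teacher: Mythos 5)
Your overall architecture --- Jensen's inequality over the mollifier averages evaluated on the diagonal, periodicity plus H\"{o}lder in the torus slots, and a final Young/H\"{o}lder step in the mollification variables $\omega_i, \varsigma_i$ --- is essentially the paper's, but one step as you describe it fails on part of the stated exponent range. In the temporal direction you ``raise to the power $q/p$ and apply Jensen again to absorb this exponent into the probability measures $\varphi_1(t-\varsigma_i)\,d\varsigma_i$''; Jensen for the power $z\mapsto z^{q/p}$ requires $q\geq p$, and the lemma assumes no relation between $p$ and $q$ (the paper later uses it precisely with $q<p$, e.g.\ for the $L^{q', 2}(\Omega_T; \bm{L^2})$ and $L^{4/3, 2}(\Omega_T; \bm{L^2})$ norms in the estimates of $I_{51}$ and $Q_{33}$, where $q'\leq 2$). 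The paper avoids this by a different ordering: after the spatial step it applies Minkowski's integral inequality to pull the temporal averages out of the $(\,\cdot\,)^{1/p}$-expression, bounding $\|[S(g)S(f)](\cdot, t; \cdot, t)\|_{L^p_x(\mathbb{R}^d)}$ by $S^{t_1}\otimes S^{t_2}(G)(t; t)$ with $G$ the spatial integral of the $y$-torus norms, and only then takes the $L^q_t$-norm exactly as in \eqref{parabolic_conver_smoothing_es_H}, where the Jensen exponent is just $q\geq 1$. Your argument is repaired by replacing the $q/p$-Jensen with this Minkowski step; as written, it only proves the case $q\geq p$.

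A second, smaller point concerns your accounting of where $s\geq\max\{p, q\}$ enters. No embedding $L^s(\mathbb{T}^d)\hookrightarrow L^p(\mathbb{T}^d)$ occurs: the spatial torus norms carry the exponents $r_1, r_2$ throughout. The condition $s\geq q$ is used as the embedding $L^s(\mathbb{T}^1)\hookrightarrow L^q(\mathbb{T}^1)$ in the $\tau$-slot, while $s\geq p$ is what licenses Minkowski's inequality to interchange the $L^{s/p}_\tau(\mathbb{T}^1)$-norm of the $p$-th powers with the spatial $\omega,\xi$-integrals, so that the $\tau$-norm ends up inside the $x$-norm as required by the target norms $L^{p_i}_x L^{s_i}_\tau L^{r_i}_y$; H\"{o}lder in $\tau$ with $1/s=1/s_1+1/s_2$ then splits the product. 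You do gesture at Minkowski in your closing remark, but attributing $s\geq p$ to a spatial-torus embedding is not a step that can be carried out; spelling out the Minkowski interchange is what actually closes the rearrangement of the mixed norms.
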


\begin{proof}
  By rescaling and introducing invisible characteristic functions, it is sufficient to prove the case where $\varepsilon=1$ and $\Omega_T=\mathbb{R}^{d+1}$. For each $t$ fixed, it follows from Fubini's theorem and H\"{o}lder's inequality that
  \begin{align*}
    \|[S(g)S(f)](x, t; x, t)\|_{L^p_x(\mathbb{R}^d)}&\leq C\Big(\int_{\mathbb{R}^d}\int_{|\omega-\xi|\leq 1}\int_{|\xi-x|\leq \frac{1}{2}}|S^t(g)(\omega, t; x, t)\cdot S^t(f)(\xi, t; x, t)|^pdxd\omega d\xi\Big)^{\frac{1}{p}}\\&\leq C\Big(\int_{\mathbb{R}^d}\int_{|\omega-\xi|\leq 1}\|S^t(g)\|_{L^{r_1}_y}^p(\omega, t; t)d\omega\cdot \|S^t(f)\|_{L^{r_2}_y}^p(\xi, t; t)d\xi\Big)^{\frac{1}{p}}\\&\leq CS^{t_1}\otimes S^{t_2}(G)(t; t),
\end{align*}
where we have applied Minkowski's inequality to take the integrals of $S^t$ outside in the last inequality and notations \eqref{parabolic_pre_notation_multiple} and \eqref{notation_tense} were used with $$G(\varsigma_1, \varsigma_2; \tau)=\Big(\int_{\mathbb{R}^d}\int_{|\omega-\xi|\leq 1}\|g\|_{L^{r_1}_y}^p(\omega, \varsigma_1; \tau)d\omega\cdot \|f\|_{L^{r_2}_y}^p(\xi, \varsigma_2; \tau) d\xi\Big)^{1/p}.$$
Then calculations similar to \eqref{parabolic_conver_smoothing_es_H} imply that
\begin{align}\label{parabolic_macr-smooth_es_gf_1}
  \begin{split}
    \|[S(g)S(f)]^1\|_{L^{q, p}(\mathbb{R}^{d+1})}&\leq C\|S^{t_1}\otimes S^{t_2}(G)(t; t)\|_{L^q(\mathbb{R})}\\&\leq C\Big(\int_{\mathbb{R}}\int_{|\varsigma_1-\varsigma_2|\leq 1}\|G(\varsigma_1, \varsigma_2; \cdot)\|_{L^q(\mathbb{T}^1)}^qd\varsigma_1 d\varsigma_2\Big)^{1/q}.
    \end{split}
  \end{align}
By Minkowski's inequality and H\"{o}lder's inequality, it is not hard to see that for $s\geq \max\{p, q\}$
  \begin{align*}
    \|G(\varsigma_1, \varsigma_2; \cdot)\|_{L^{q}(\mathbb{T}^1)}\leq \|G(\varsigma_1, \varsigma_2; \cdot)\|_{L^{s}(\mathbb{T}^1)}\leq \|g\|_{L^{p_1, s_1, r_1}_{x, \tau, y}(\mathbb{R}^d)}(\varsigma_1)\cdot \|f\|_{L^{p_2, s_2, r_2}_{x, \tau, y}(\mathbb{R}^d)}(\varsigma_2).
  \end{align*}
Substituting the last inequality into \eqref{parabolic_macr-smooth_es_gf_1} and applying H\"{o}lder's inequality, we complete the proof.
\end{proof}

\begin{remark}\label{parabolic_pre_remark_1}
  Lemma \ref{parabolic_pre_smooth_lem_2} continues to hold even if $\varphi_1, \varphi_2$ do not satisfy \eqref{parabolic_pre_smooth_cond_unit}. In particular, we have for $s\geq \max\{p, q\}$
  \begin{align*}
    \|[\nabla_x S_\varepsilon(h)]^\varepsilon\|_{L^{q, p}(\Omega_T\setminus\Omega_T^{1, \varepsilon})}=\varepsilon^{-1}\|[(\nabla\varphi_2)_\varepsilon* S^t_\varepsilon(h)]^\varepsilon\|_{L^{q, p}(\Omega_T\setminus\Omega_T^{1, \varepsilon})}\leq C\varepsilon^{-1}\|h\|_{L^{q, p}(\Omega_T; \bm{L^{s, p}})},
  \end{align*}
  where $(\nabla\varphi_2)_\varepsilon(x)=\varepsilon^{-d}(\nabla\varphi_2)(x/\varepsilon)$.
\end{remark}

\begin{lemma}\label{parabolic_pre_smooth_lem_3}
  Let $\Omega$ be a Lipschitz domain in $\mathbb{R}^d$, $T>0$ and $1\leq p, q<\infty$. Suppose that $\nabla_xg\in L^{q_1, p_1}(\Omega_T; \bm{L^\infty})$, $g\in W^{\frac{1}{2}, \check{q}_1}(0, T; L^{\check{p}_1}(\Omega; \bm{L^\infty}))$, $f\in L^{q_2, p_2}(\Omega_T; \bm{L^{s_2, r_2}})\cap L^{\check{q}_2, \check{p}_2}(\Omega_T; \bm{L^{s_2, r_2}})$, $h\in L^{q_3, p_3}(\Omega_T; \bm{L^{s_3, r_3}})\cap L^{\check{q}_3, \check{p}_3}(\Omega_T; \bm{L^{s_3, r_3}})$, where $p_i, \check{p}_i, q_i, \check{q}_i$ satisfy $\frac{1}{p}=\frac{1}{p_1}+\frac{1}{p_2}+\frac{1}{p_3}=\frac{1}{\check{p}_1}+\frac{1}{\check{p}_2}+\frac{1}{\check{p}_3}$, $\frac{1}{q}=\frac{1}{q_1}+\frac{1}{q_2}+\frac{1}{q_3}=\frac{1}{\check{q}_1}+\frac{1}{\check{q}_2}+\frac{1}{\check{q}_3}$. Let $\frac{1}{r}=\frac{1}{r_2}+\frac{1}{r_3}$, $\frac{1}{s}=\frac{1}{s_2}+\frac{1}{s_3}$. If $r\geq p$ and $s\geq \max\{p, q\}$, then for $\varepsilon>0$,
  \begin{align*}
    \begin{split}
      &\quad\|\{[S_\varepsilon(g)\cdot S_\varepsilon(f)-S_\varepsilon(g\cdot f)]\cdot S_\varepsilon(h)\}^\varepsilon\|_{L^{q, p}(\Omega_T\setminus\Omega_T^{1, \varepsilon})}\\&\leq C\varepsilon\{\|\nabla_xg\|_{L^{q_1, p_1}(\Omega_T; \bm{L^\infty})}\cdot\|f\|_{L^{q_2, p_2}(\Omega_T; \bm{L^{s_2, r_2}})}\cdot\|h\|_{L^{q_3, p_3}(\Omega_T; \bm{L^{s_3, r_3}})}\\&\qquad+[g]_{W^{\frac{1}{2}, \check{q}_1}(0, T; L^{\check{p}_1}(\Omega; \bm{L^\infty}))}\cdot\|f\|_{L^{\check{q}_2, \check{p}_2}(\Omega_T; \bm{L^{s_2, r_2}})}\cdot\|h\|_{L^{\check{q}_3, \check{p}_3}(\Omega_T; \bm{L^{s_3, r_3}})}\},
    \end{split}
\end{align*}
  where $[g]_{W^{\frac{1}{2}, \check{q}_1}(0, T; L^{\check{p}_1}(\Omega; \bm{L^\infty}))}$ is given by \eqref{parabolic_pre_def_seminorm} and $C$ depends only on $d$.
  \end{lemma}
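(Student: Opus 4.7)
The plan is to write the commutator $S_\varepsilon(g)S_\varepsilon(f)-S_\varepsilon(gf)$ as a double-convolution integral against two copies of the smoothing kernel, and then to split the resulting increment of $g$ into a spatial piece controlled by $\nabla_x g$ and a temporal piece controlled by the $W^{1/2,\check q_1}$-seminorm. Writing $\varphi_\varepsilon(z,s):=\varphi_{2,\varepsilon}(z)\varphi_{1,\varepsilon}(s)$ and using $\iint\varphi_\varepsilon=1$ twice, one has
\begin{align*}
  S_\varepsilon(g)S_\varepsilon(f)-S_\varepsilon(gf)=\iiiint\bigl[g(x-z_1,t-s_1;\cdot)-g(x-z_2,t-s_2;\cdot)\bigr]\cdot f(x-z_2,t-s_2;\cdot)\,\varphi_\varepsilon(z_1,s_1)\varphi_\varepsilon(z_2,s_2)\,dz_1 ds_1 dz_2 ds_2,
\end{align*}
and the increment of $g$ decomposes as $D^x+D^t$ with
\begin{align*}
  D^x:=g(x-z_1,t-s_1;\cdot)-g(x-z_2,t-s_1;\cdot),\qquad D^t:=g(x-z_2,t-s_1;\cdot)-g(x-z_2,t-s_2;\cdot).
\end{align*}
Each piece will produce one of the two terms on the right-hand side of the claim.

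For the spatial contribution, the support bound $|z_1-z_2|\leq\varepsilon$ and the fundamental theorem of calculus yield $\|D^x\|_{\bm{L^\infty}}\leq\varepsilon\int_0^1\|\nabla_x g(x-z_2-\theta(z_1-z_2),t-s_1;\cdot)\|_{\bm{L^\infty}}\,d\theta$. Absorbing the $\theta$-average and the $(z_1,z_2,s_1,s_2)$-integrals into modified smoothing operators with bounded (but not necessarily unit-mass) kernels, the spatial part is pointwise dominated by $\varepsilon$ times a product of (modified) $S_\varepsilon$-averages of $|\nabla_x g|$ and $|f|$. Multiplying by $S_\varepsilon(h)$, evaluating at $(y,\tau)=(x/\varepsilon,t/\varepsilon^2)$, and invoking a trilinear version of Lemma \ref{parabolic_pre_smooth_lem_2} (an immediate consequence of the Fubini/H\"older scheme in its proof, together with Remark \ref{parabolic_pre_remark_1}) with exponents $(q_i,p_i,s_i,r_i)$ produces the first term of the bound.

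For the temporal contribution the spatial averaging factors out cleanly. After pulling the $\bm{L^\infty}$-norm in $(y,\tau)$ inside the $ds_1 ds_2$-integration, one is reduced to estimating in $L^{q,p}$ the quantity
\begin{align*}
  \varepsilon^{-4}\iint_{|s_1|,|s_2|\leq\varepsilon^2/2}|g(x-z_2,t-s_1;\cdot)-g(x-z_2,t-s_2;\cdot)|\,ds_1 ds_2
\end{align*}
multiplied by $|f|$ and $|S_\varepsilon(h)|$. Since $|s_1-s_2|\leq\varepsilon^2$ on the support, the substitution $h=s_1-s_2$ together with the Slobodeckij identity
\begin{align*}
  \int_{|h|\leq\varepsilon^2}\|D_h g\|^{\check q_1}_{L^{\check q_1}(0,T;L^{\check p_1}(\Omega;\bm{L^\infty}))}\,dh\,\leq\,C\,\varepsilon^{2+\check q_1}\,[g]^{\check q_1}_{W^{1/2,\check q_1}(0,T;L^{\check p_1}(\Omega;\bm{L^\infty}))}
\end{align*}
converts the $\varepsilon^{-4}$ prefactor into a clean $\varepsilon^{\check q_1}$ gain after raising to the power $\check q_1$ and integrating in $(x,t)$. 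H\"older's inequality with exponents $(\check q_i,\check p_i)$ in $(t,x)$ combined with the trilinear version of Lemma \ref{parabolic_pre_smooth_lem_2} in the microscopic variables then yields the second term.

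The principal obstacle is the sharpness of the temporal estimate: a naive Cauchy--Schwarz in $(s_1,s_2)$ against the Slobodeckij seminorm produces only $\varepsilon^{1/2}$, and one must redistribute the size $|s_1-s_2|\leq\varepsilon^2$ carefully between the weight $|s_1-s_2|^{-1-\check q_1/2}$ implicit in $[g]_{W^{1/2,\check q_1}}$ and the normalization of $\varphi_{1,\varepsilon}\otimes\varphi_{1,\varepsilon}$ in order to extract the full $\varepsilon$. A secondary technical point is the careful bookkeeping of the microscopic norms $\bm{L^{s_i,r_i}}$ through the four-variable product structure under the hypothesis $s\geq\max\{p,q\}$; this is handled by repeating the Fubini/H\"older argument of Lemma \ref{parabolic_pre_smooth_lem_2} with minor modifications.
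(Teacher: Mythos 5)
Your proposal is correct and follows essentially the same route as the paper: the paper likewise splits the commutator into a spatial-increment piece (handled there via the Poincar\'e inequality \eqref{parabolic_ineq_poincare} and duality rather than your FTC-along-segments with a pushforward kernel) and a temporal-increment piece dominated by a smoothed mean oscillation of $g$ in time, both reassembled through the Fubini--H\"older scheme of Lemma \ref{parabolic_pre_smooth_lem_2}. Your Slobodeckij redistribution yielding the full factor $\varepsilon$ is exactly the paper's H\"older step, performed there at scale $1$ after rescaling.
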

  \begin{proof}
    To keep the process simple, we prove the case where $\Omega_T=\mathbb{R}^{d+1}$ and $h\equiv 1$. The proof of the general case is similar. Without loss of generality, we also suppose that $\varepsilon=1$ by rescaling and $g$ is independent of $(y, \tau)$. Observe that
    \begin{align*}
      \|[S(g)\cdot S(f)-S(g\cdot f)]^1\|_{L^{q, p}(\mathbb{R}^{d+1})}&\leq \|[S^x(S^t(g))\cdot S^x(S^t(f))-S^x(S^t(g)\cdot S^t(f))]^1\|_{L^{q, p}(\mathbb{R}^{d+1})}\\&\qquad+\|[S^x(S^t(g)\cdot S^t(f)-S^t(g\cdot f))]^1\|_{L^{q, p}(\mathbb{R}^{d+1})},
    \end{align*}
where the superscript $1$ has the same meaning as \eqref{notation_sup_ve}. 
    
  For the first term, let $H\in L^{p'}(\mathbb{R}^{d})$. By using the Poincar\'{e}'s inequality
  \begin{align}\label{parabolic_ineq_poincare}
    \int_{B(x, r)}|u(y)-u(z)|dy\leq Cr^{d}\int_{B(x, r)}|\nabla u(y)||y-z|^{1-d}dy
  \end{align}
for any $u\in C^1(B(x, r))$ and $z\in B(x, r)$, one can obtain
\begin{align*}
  &\quad\Big|\int_{\mathbb{R}^{d}}[S^x(S^t(g))\cdot S^x(S^t(f))-S^x(S^t(g)\cdot S^t(f))](x, t; x, t)\cdot H(x) dx\Big|\\&\leq C\int_{\mathbb{R}^d}\int_{|\xi-x|\leq \frac{1}{2}}|S^t(f)(\xi, t; x, t)|\int_{|\omega-x|\leq \frac{1}{2}}|S^t(\nabla_x g)(\omega, t)||\omega-\xi|^{1-d}d\omega d\xi\cdot |H(x)| dx\\&\leq C\int_{\mathbb{R}^d}\int_{|\omega-\xi|\leq 1}|S^t(\nabla_x g)(\omega, t)||\omega-\xi|^{1-d}d\omega\cdot \int_{|\xi-x|\leq \frac{1}{2}}|S^t(f)(\xi, t; x, t)||H(x)|dx d\xi\\&\leq C\int_{\mathbb{R}^d}\int_{\mathbb{R}^d}|S^t(\nabla_x g)(\omega, t)|\phi(\omega-\xi)d\omega\cdot\|S^t(f)\|_{L^p_y}(\xi, t; t)\|H\|_{L^{p'}(B(\xi, \frac{1}{2}))} d\xi\\&\leq C\Big(\int_{\mathbb{R}^d}\int_{\mathbb{R}^d}|S^t(\nabla_x g)(\omega, t)|^p\phi(\omega-\xi)\|S^t(f)\|_{L^p_y}^p(\xi, t; t)d\omega d\xi\Big)^{1/p}\|H\|_{L^{p'}(\mathbb{R}^d)},
\end{align*}
where Fubini's theorem as well as H\"{o}lder's inequality was used and $\phi(z)=|z|^{1-d}\mathbf{1}_{\{|z|\leq 1\}}$. Since $H\in L^{p'}(\mathbb{R}^d)$ is arbitrary, this implies that
\begin{align*}
  &\quad\|[S^x(S^t(g))\cdot S^x(S^t(f))-S^x(S^t(g)\cdot S^t(f))](x, t; x, t)\|_{L^p_x(\mathbb{R}^d)}\\&\leq C\Big(\int_{\mathbb{R}^d}\int_{\mathbb{R}^d}|S^t(\nabla_x g)(\omega, t)|^p\phi(\omega-\xi)\|S^t(f)\|_{L^p_y}^p(\xi, t; t)d\omega d\xi\Big)^{1/p}\\&\leq CS^{t_1}\otimes S^{t_2}(G)(t; t),
\end{align*}
where we have applied Minkowski's inequality to take the integrals of $S^t$ outside and
\begin{align*}
  G(\varsigma_1, \varsigma_2; \tau)=\Big(\int_{\mathbb{R}^d}\int_{\mathbb{R}^d}|\nabla_x g(\omega, \varsigma_1)|^p\phi(\omega-\xi)\|f\|_{L^p_y}^p(\xi, \varsigma_2; \tau)d\omega d\xi\Big)^{1/p}. 
\end{align*}
Similar to the arguments of Lemma \ref{parabolic_pre_smooth_lem_2}, we obtain
\begin{align*}
     \|[S^x(S^t(g)\cdot S^t(f)-S^t(g\cdot f))]^1\|_{L^{q, p}(\mathbb{R}^{d+1})}&\leq C\|S^{t_1}\otimes S^{t_2}(G)(t; t)\|_{L^q(\mathbb{R})}\\&\leq C\|\nabla_x g\|_{L^{q_1, p_1}_{t, x}(\mathbb{R}^{d+1})}\|f\|_{L^{q_2, p_2, s, r}_{t, x, \tau, y}(\mathbb{R}^{d+1})}
\end{align*}
for $s\geq \max\{p, q\}$, where Young's inequality is used in the second step and $C$ depends only on $d$.

To deal with the second term, note that
    \begin{align*}
      &\quad|S^t(g)\cdot S^t(f)-S^t(g\cdot f)|(x, t; y, \tau)\\&\leq C\int_{\mathbb{R}}|f(x, \varsigma_1; y, \tau)|\cdot\varphi_{1}(t-\varsigma_1)\cdot\fint_{|\varsigma_2-t|\leq \frac{1}{2}}|g(x, \varsigma_1)-g(x, \varsigma_2)|d\varsigma_2 d\varsigma_1\\&\leq C\int_{\mathbb{R}}|f(x, \varsigma_1; y, \tau)|\cdot\varphi_{1}(t-\varsigma_1)\cdot\fint_{|\varsigma_2-\varsigma_1|\leq 1}|g(x, \varsigma_1)-g(x, \varsigma_2)|d\varsigma_2 d\varsigma_1\\&= CS^t(|f|\cdot G),
    \end{align*}
where $$G(x, t)=\fint_{|\varsigma-t|\leq 1}|g(x, t)-g(x, \varsigma)|d\varsigma.$$ Therefore, it follows from Lemma \ref{parabolic_pre_smooth_lem_2} that, for $s\geq \max\{p, q\}$,
\begin{align*}
      \|[S^x(S^t(g)\cdot S^t(f)-S^t(g\cdot f))]^1\|_{L^{q, p}(\mathbb{R}^{d+1})}&\leq C\|[S(|f|\cdot G)]^1\|_{L^{q, p}(\mathbb{R}^{d+1})}\\&\leq C\|G\|_{L^{\check{q}_1, \check{p}_1}(\mathbb{R}^{d+1})}\|f\|_{L^{\check{q}_2, \check{p}_2}(\mathbb{R}^{d+1}; \bm{L^{s, p}})}.
\end{align*}
Moreover, by H\"{o}lder's inequality,
\begin{align*}
  \|G\|_{L^{\check{q}_1, \check{p}_1}(\mathbb{R}^{d+1})}&\leq\Big(\int_{\mathbb{R}}\fint_{|\varsigma-t|\leq 1}\|g(\cdot, t)-g(\cdot, \varsigma)\|_{L^{\check{p}_1}(\mathbb{R}^d)}^{\check{q}_1}d\varsigma dt\Big)^{1/\check{q}_1}\\&\leq \Big(\int_{\mathbb{R}}\int_{|\varsigma-t|\leq 1}\frac{\|g(\cdot, t)-g(\cdot, \varsigma)\|_{L^{\check{p}_1}(\mathbb{R}^d)}^{\check{q}_1}}{|t-\varsigma|^{1+{\check{q}_1}/2}}d\varsigma dt\Big)^{1/\check{q}_1}\\&\leq [g]_{W^{\frac{1}{2}, \check{q}_1}(\mathbb{R}; L^{\check{p}_1}(\mathbb{R}^d))},
\end{align*}
which implies that
\begin{align*}
  \|[S^x(S^t(g)\cdot S^t(f)-S^t(g\cdot f))]^1\|_{L^{q, p}(\mathbb{R}^{d+1})}\leq C[g]_{W^{\frac{1}{2}, \check{q}_1}(\mathbb{R}; L^{\check{p}_1}(\mathbb{R}^d))}\|f\|_{L^{\check{q}_2, \check{p}_2}(\mathbb{R}^{d+1}; \bm{L^{s, r}})}.
\end{align*}

By combining the estimates of these two terms above, we accomplish the proof.
\end{proof}

\begin{lemma}\label{parabolic_pre_smooth_lem_4}
  Let the assumptions of Lemma \ref{parabolic_pre_smooth_lem_3} hold. Suppose further $p_1>1$, $\check{p}_1=\infty$, $r\geq p$ and $\frac{1}{r}\leq \frac{1}{d}+\frac{1}{p_2}+\frac{1}{p_3}$, $s\geq \max\{p, q\}$ and $\frac{1}{s}<\frac{1}{2}+\frac{1}{\check{q}_2}+\frac{1}{\check{q}_3}$. Then for $\varepsilon>0$,
  \begin{align*}&\quad\|[(g-S_\varepsilon(g))\cdot S_\varepsilon(f)\cdot S_\varepsilon(h)]^\varepsilon\|_{L^{q, p}(\Omega_T\setminus\Omega_T^{1, \varepsilon})}\\&\leq C\varepsilon\{\|\nabla_xg\|_{L^{q_1, p_1}(\Omega_T; \bm{L^\infty})}\cdot\|f\|_{L^{q_2, p_2}(\Omega_T; \bm{L^{s_2, r_2}})}\cdot\|h\|_{L^{q_3, p_3}(\Omega_T; \bm{L^{s_3, r_3}})}\\&\qquad+[g]_{W^{\frac{1}{2}, \check{q}_1}(0, T; L^{\infty}(\Omega; \bm{L^\infty}))}\cdot\|f\|_{L^{\check{q}_2, \check{p}_2}(\Omega_T; \bm{L^{s_2, r_2}})}\cdot\|h\|_{L^{\check{q}_3, \check{p}_3}(\Omega_T; \bm{L^{s_3, r_3}})}\},\end{align*}
  where $C$ depends only on $d, q, p_1, \check{q}_1, s$.
\end{lemma}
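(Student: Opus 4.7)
The plan is to split the smoothing defect according to $S_\varepsilon=S^x_\varepsilon\circ S^t_\varepsilon$,
\begin{equation*}
g-S_\varepsilon(g)=\bigl(g-S^t_\varepsilon(g)\bigr)+\bigl(S^t_\varepsilon(g)-S^x_\varepsilon S^t_\varepsilon(g)\bigr),
\end{equation*}
and handle the two pieces separately, with the spatial part paying for the first term on the right-hand side of the claim and the temporal part paying for the second. The argument extends that of Lemma \ref{parabolic_pre_smooth_lem_3}, but the indices have to be tracked more carefully since the singular kernel $|z|^{1-d}$ (spatial step) and the fractional seminorm $W^{\frac{1}{2},\check{q}_1}$ (temporal step) interact with the mixed $L^{q,p,s,r}_{t,x,\tau,y}$ scale through Hardy--Littlewood--Sobolev in space and a fractional-integration-type bound in time, respectively. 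As in the earlier lemmas, I would reduce to $\varepsilon=1$ and $\Omega_T=\mathbb{R}^{d+1}$ by rescaling and introducing characteristic functions.

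For the spatial part, I would apply Poincaré's inequality \eqref{parabolic_ineq_poincare} on the ball $B(x,\varepsilon/2)$ pointwise in $(t;y,\tau)$ to the function $S^t_\varepsilon(g)(\cdot,t;y,\tau)$ to obtain
\begin{equation*}
\bigl|(S^t_\varepsilon(g)-S^x_\varepsilon S^t_\varepsilon(g))(x,t;y,\tau)\bigr|\leq C\varepsilon\int_{B(x,\varepsilon/2)}|S^t_\varepsilon(\nabla_xg)(\omega,t;y,\tau)|\,|\omega-x|^{1-d}\,d\omega.
\end{equation*}
After multiplying by $S_\varepsilon(f)\cdot S_\varepsilon(h)$, evaluating at $(x,t;x/\varepsilon,t/\varepsilon^2)$ and pairing in duality with an arbitrary $H\in L^{p'}(\mathbb{R}^d)$, Fubini's theorem and Hölder's inequality collapse the $x$-integration to a convolution against $\phi(z)=|z|^{1-d}\mathbf{1}_{\{|z|\leq 1\}}$, exactly as in the first half of the proof of Lemma \ref{parabolic_pre_smooth_lem_3}. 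The hypothesis $p_1>1$ together with $\frac{1}{r}\leq\frac{1}{d}+\frac{1}{p_2}+\frac{1}{p_3}$ is what lets one absorb this Riesz-type kernel by a Young/HLS convolution estimate. The remaining $t$-integration is then handled by the $S^{t_1}\otimes S^{t_2}(G)(t;t)$ device of Lemmas \ref{parabolic_pre_smooth_lem_1}--\ref{parabolic_pre_smooth_lem_2}, which yields the first summand on the right-hand side.

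For the temporal part, I would start from
\begin{equation*}
|g-S^t_\varepsilon(g)|(x,t;y,\tau)\leq C\fint_{|t'-t|\leq\varepsilon^2/2}|g(x,t;y,\tau)-g(x,t';y,\tau)|\,dt',
\end{equation*}
take the $\bm{L^\infty}$-norm in $(y,\tau)$ and, since $\check{p}_1=\infty$, also the $L^\infty$-norm in $x$. Raising to the power $\check{q}_1$, integrating in $t$, and comparing the averaging kernel $\mathbf{1}_{\{|t-t'|\leq\varepsilon^2/2\}}/\varepsilon^2$ against the fractional-seminorm weight $|t-t'|^{-1-\check{q}_1/2}$ on the same interval produces the gain $\varepsilon^{\check{q}_1}$ (this is the $\tfrac{1}{2}$-order-in-time scaling), so
\begin{equation*}
\bigl\|\|g-S^t_\varepsilon(g)\|_{\bm{L^\infty}}\bigr\|_{L^{\check{q}_1}(0,T;L^\infty(\Omega))}\leq C\varepsilon\,[g]_{W^{\frac{1}{2},\check{q}_1}(0,T;L^\infty(\Omega;\bm{L^\infty}))}.
\end{equation*}
A three-factor Hölder inequality in $(t,x)$ combined with Lemma \ref{parabolic_pre_smooth_lem_2} applied to the smoothed product $[S_\varepsilon(f)\cdot S_\varepsilon(h)]^\varepsilon$ then delivers the second summand. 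The strict inequality $\frac{1}{s}<\frac{1}{2}+\frac{1}{\check{q}_2}+\frac{1}{\check{q}_3}$ is what secures the Lemma \ref{parabolic_pre_smooth_lem_2} hypothesis $s\geq\max\{p,q'\}$, where $q'>q$ is the enlarged time integrability produced after peeling off the $g-S^t_\varepsilon(g)$ factor.

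The main obstacle is the index bookkeeping in the mixed $L^{q,p,s,r}_{t,x,\tau,y}$ norms rather than any single estimate: one must verify that the one spatial derivative (respectively half a temporal derivative) is distributed so that, after the Riesz-kernel convolution in $x$ (respectively the fractional-type estimate in $t$), the remaining two factors $S_\varepsilon(f)$ and $S_\varepsilon(h)$ close in exactly the norms given in the hypotheses. The inequalities $\frac{1}{r}\leq\frac{1}{d}+\frac{1}{p_2}+\frac{1}{p_3}$ and $\frac{1}{s}<\frac{1}{2}+\frac{1}{\check{q}_2}+\frac{1}{\check{q}_3}$ are the precise quantitative translations of these two gains, and matching each to the correct classical estimate (Young's inequality for $\phi\in L^1\cap L^{d',\infty}$, HLS, or the fractional-seminorm comparison in time) is the main technical nuance.
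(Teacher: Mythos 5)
Your spatial half is essentially the paper's argument: the same splitting $g-S_\varepsilon(g)=(g-S^t_\varepsilon(g))+S^t_\varepsilon(g-S^x_\varepsilon(g))$, Poincar\'e's inequality \eqref{parabolic_ineq_poincare} producing the kernel $|z|^{1-d}\mathbf{1}_{\{|z|\leq\varepsilon\}}$, duality with $H\in L^{p'}$, Hardy--Littlewood--Sobolev (this is where $p_1>1$ and $\frac1r\leq\frac1d+\frac1{p_2}+\frac1{p_3}$ enter), and the $S^{t_1}\otimes S^{t_2}(G)(t;t)$ device to close in time. That part is fine.

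The temporal half has a genuine gap. You bound $\|g-S^t_\varepsilon(g)\|_{L^{\check q_1}(0,T;L^\infty(\Omega;\bm{L^\infty}))}\leq C\varepsilon[g]_{W^{\frac12,\check q_1}}$ (which is correct, and is the same computation the paper uses for $\|G\|$ in Lemma \ref{parabolic_pre_smooth_lem_3}), and then peel this factor off by a three-factor H\"older inequality in $t$, applying Lemma \ref{parabolic_pre_smooth_lem_2} to $[S_\varepsilon(f)S_\varepsilon(h)]^\varepsilon$ in the remaining norm $L^{Q,p}$ with $\frac1Q=\frac1{\check q_2}+\frac1{\check q_3}$. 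But Lemma \ref{parabolic_pre_smooth_lem_2} then requires $s\geq\max\{p,Q\}$, and since $Q>q$ this is strictly stronger than the assumed $s\geq\max\{p,q\}$; the strict inequality $\frac1s<\frac12+\frac1{\check q_2}+\frac1{\check q_3}$ does \emph{not} imply $s\geq Q$ (take, e.g., $\check q_2=\check q_3=\infty$, so $Q=\infty$, with $s$ finite: all hypotheses of the lemma hold, but your H\"older step would demand an $L^\infty_t$ bound on the oscillating product that the $L^s_\tau$ norms cannot supply). In other words, the crude separation discards exactly the interaction between the slowly mollified factor $g-S^t_\varepsilon(g)$ and the fast oscillation in $t/\varepsilon^2$ that the hypothesis is tailored to. The paper's proof keeps this interaction: it writes $g-S^t(g)=-\int_0^1\int_{\mathbb R}\partial_tg(\xi)\varPhi_\theta(t-\xi)\,d\xi\,d\theta$ with $\varPhi(\varsigma)=\varsigma\varphi_1(\varsigma)$, integrates by parts to put $\partial_t$ on $\varPhi_\theta$, subtracts local averages of $g$ over intervals of length $\theta$ to convert the derivative into the $W^{\frac12,\check q_1}$ oscillation, and closes with Young's inequality in time with exponents satisfying $\frac1{\check q_1}+\frac1P+\frac1s+\frac1{q'}=2$; the integrability in $\theta$ of $\theta^{-\frac32+\frac1P}$ forces $P<2$, which is precisely the condition $\frac1s<\frac12+\frac1{\check q_2}+\frac1{\check q_3}$. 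You should replace your H\"older-plus-Lemma \ref{parabolic_pre_smooth_lem_2} step by an argument of this type (or otherwise strengthen the hypothesis to $s\geq Q$, which would prove a weaker lemma than stated).
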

\begin{proof}
  As in the proof of Lemma \ref{parabolic_pre_smooth_lem_3}, we prove the case where $\Omega_T=\mathbb{R}^{d+1}$ and $h\equiv 1$, in which case $p_3=\check{p}_3=q_3=\check{q}_3=\infty$ and $\frac{1}{p}=\frac{1}{p_1}+\frac{1}{p_2}=\frac{1}{\check{p}_2}$, $\frac{1}{q}=\frac{1}{q_1}+\frac{1}{q_2}=\frac{1}{\check{q}_1}+\frac{1}{\check{q}_2}$. Without loss of generality, we also suppose that $\varepsilon=1$ and $g$ is independent of $(y, \tau)$. Note that
  \begin{align}\label{parabolic_ineq_triangle}
    \begin{split}
          &\|[(g-S(g))\cdot S(f)]^1\|_{L^{q, p}(\mathbb{R}^{d+1})}\leq \|[(g-S^t(g))\cdot S(f)]^1\|_{L^{q, p}(\mathbb{R}^{d+1})}\\&\qquad\qquad+\|[S^t(g-S^x(g))\cdot S(f)]^1\|_{L^{q, p}(\mathbb{R}^{d+1})}.
    \end{split}
  \end{align}

  For the second term, we focus on the case $p\leq d$, as the case $p>d$ is simple (see \cite{Xu2019_stratified2}). Let $H\in L^{p'}(\mathbb{R}^{d})$, where $p'=\frac{p}{p-1}$. For each $t$ fixed, it follows from inequality \eqref{parabolic_ineq_poincare} and Fubini's theorem
  \begin{align*}
    &\quad\Big|\int_{\mathbb{R}^{d}}S^t(g-S^x(g))(x, t)\cdot S(f)(x, t; x, t)\cdot H(x)dx\Big|\\&\leq C\int_{\mathbb{R}^d}\int_{|\xi-x|\leq \frac{1}{2}}\int_{|\omega-x|\leq \frac{1}{2}}|S^t(\nabla_xg)(\omega, t)||\omega-x|^{1-d}d\omega\cdot |S^t(f)(\xi, t; x, t)H(x)| d\xi dx\\&\leq C\int_{\mathbb{R}^d}d\xi\cdot\int_{\mathbb{R}^d}\int_{\mathbb{R}^d}|S^t(\nabla_xg)(\omega, t)|\mathbf{1}_{|\omega-\xi|\leq 1}\cdot|\omega-x|^{1-d}\cdot|S^t(f)(\xi, t; x, t)H(x)|\mathbf{1}_{|x-\xi|\leq 1} dx d\omega\\&\leq C\int_{\mathbb{R}^d}\|S^t(\nabla_xg)(\cdot, t)\|_{L^{p_1}(B(\xi, 1))}\|S^t(f)(\xi, t; \cdot, t)H(\cdot)\|_{L^P(B(\xi, 1))}d\xi\\&\leq C\int_{\mathbb{R}^d}\|S^t(\nabla_xg)(\cdot, t)\|_{L^{p_1}(B(\xi, 1))}\cdot\|S^t(f)\|_{L^r_y}(\xi, t; t)\cdot\|H\|_{L^{p'}(B(\xi, 2))}d\xi\\&\leq C\Big(\int_{\mathbb{R}^d}\|S^t(\nabla_xg)(\cdot, t)\|_{L^{p_1}(B(\xi, 1))}^p\cdot\|S^t(f)\|_{L^r_y}^p(\xi, t; t)d\xi\Big)^{1/p}\cdot\|H\|_{L^{p'}(\mathbb{R}^d)},
\end{align*}
where $\frac{1}{P}=\frac{1}{d}+\frac{1}{p_1'}=\frac{1}{d}+\frac{1}{p_2}+\frac{1}{p'}\geq\frac{1}{r}+\frac{1}{p'}$ and we have used the Hardy-Littlewood-Sobolev inequality as well as H\"{o}lder's inequality. By duality, this leads to
\begin{align*}
  \|[S^t(g-S^x(g))\cdot S(f)]^1\|_{L^p_x(\mathbb{R}^{d})}&\leq C\Big(\int_{\mathbb{R}^d}\|S^t(\nabla_xg)(\cdot, t)\|_{L^{p_1}(B(\xi, 1))}^p\cdot\|S^t(f)\|_{L^r_y}^p(\xi, t; t)d\xi\Big)^{1/p}\\&\leq CS^{t_1}\otimes S^{t_2}(G)(t; t),
\end{align*}
where
\begin{align*}
  G(\varsigma_1, \varsigma_2; \tau)=\Big(\int_{\mathbb{R}^d}\|\nabla_xg(\cdot, \varsigma_1)\|_{L^{p_1}(B(\xi, 1))}^p\cdot\|f\|_{L^r_y}^p(\xi, \varsigma_2; \tau)d\xi\Big)^{1/p}.
\end{align*}
Following the arguments of Lemma \ref{parabolic_pre_smooth_lem_2}, we conclude
\begin{align}\label{parabolic_conver_smoothing_es_term1}
  \begin{split}
     \|[S^t(g-S^x(g))\cdot S(f)]^1\|_{L^{q, p}(\mathbb{R}^{d+1})}&\leq C\|S^{t_1}\otimes S^{t_2}(G)(t; t)\|_{L^q(\mathbb{R})}\\&\leq C\|\nabla_x g\|_{L^{q_1, p_1}_{t, x}(\mathbb{R}^{d+1})}\|f\|_{L^{q_2, p_2, s, r}_{t, x, \tau, y}(\mathbb{R}^{d+1})}
  \end{split}
\end{align}
for $s\geq \max\{p, q\}$. This gives the estimate for the second term.

To deal with the first term in the r.h.s. of \eqref{parabolic_ineq_triangle}, we prove a special case of the desired estimate, that is,
\begin{align}\label{parabolic_es_special}
  \|(g-S^t(g))\cdot [S^t(f)]^1\|_{L^q(\mathbb{R})}\leq C[g]_{W^{\frac{1}{2}, \check{q}_1}(\mathbb{R})}\|f\|_{L^{\check{q}_2}(\mathbb{R}; L^s(\mathbb{T}))},
\end{align}
where $g$ is independent of $x, y, \tau$ and $f$ is independent of $x, y$. The vector-valued case can be proved in the same manner. Since $g$ can be approximated by smooth functions, we just suppose $g$ itself is smooth, in which case we write
\begin{align*} g(t)-S^t(g)(t)&=\int_{\mathbb{R}}(g(t)-g(t-\varsigma))\varphi_1(\varsigma)d\varsigma=\int_{\mathbb{R}}\int_0^1\partial_tg(t-\theta \varsigma)\cdot\varsigma d\theta \varphi_1(\varsigma)d\varsigma\\&=-\int_0^1\int_{\mathbb{R}}\partial_tg(\xi)\cdot\varPhi_\theta(t-\xi)d\xi d\theta,
\end{align*}
where $\varPhi(\varsigma)=\varsigma\varphi_1(\varsigma)$ and $\varPhi_\theta(\cdot)=\frac{1}{\theta}\varPhi(\frac{\cdot}{\theta})$. For $H\in L^{q'}(\mathbb{R})$, it holds that
\begin{align*}
  &\quad\int_{\mathbb{R}}(g-S^t(g))\cdot S^t(f)(t; t)\cdot H(t)dt\\&=-\int_0^1\iiint_{\mathbb{R}\times\mathbb{R}\times\mathbb{R}}\partial_tg(\xi)\cdot\varPhi_\theta(t-\xi)d\xi f(\varsigma; t)H(t)\varphi_1(t-\varsigma)d\varsigma dtd\theta\\&=-\int_0^1\iiint_{\mathbb{R}\times\mathbb{R}\times\mathbb{R}}g(\xi)\cdot\partial_t\varPhi_\theta(t-\xi)d\xi f(\varsigma; t)H(t)\varphi_1(t-\varsigma)dtd\varsigma d\theta.
\end{align*}
For each $\theta$ and $\varsigma$ fixed, we can cover the interval $(\varsigma-\frac{1}{2}, \varsigma+\frac{1}{2})$ by finite intervals $[t_i, t_{i+1}]$ of length $\theta$ and obtain
\begin{align*}
  &\quad\Big|\int_{\mathbb{R}}\int_{\mathbb{R}}g(\xi)\partial_t\varPhi_\theta(t-\xi)f(\varsigma; t)H(t)\varphi_1(t-\varsigma)d\xi dt\Big|\\&=\Big|\sum_{i}\int_{t_i}^{t_{i+1}}\int_{\mathbb{R}}(g(\xi)-\fint_{t_i}^{t_{i+1}}g)\partial_t\varPhi_\theta(t-\xi)f(\varsigma; t)H(t)\varphi_1(t-\varsigma)d\xi dt\Big|\\&\leq \sum_{i}\int_{\mathbb{R}}\int_{t_i}^{t_{i+1}}\fint_{t_i}^{t_{i+1}}\frac{|g(\xi)-g(\zeta)|}{|\xi-\zeta|^{1/2+1/\check{q}_1}}d\zeta\cdot\theta^{\frac{1}{2}+\frac{1}{\check{q}_1}}\cdot|\partial_t\varPhi_\theta(t-\xi)|\cdot|f(\varsigma; t)H(t)|dtd\xi\\&\leq \theta^{-\frac{1}{2}}\sum_{i}\int_{\mathbb{R}}\int_{t_i}^{t_{i+1}}\Big(\int_{t_i}^{t_{i+1}}\frac{|g(\xi)-g(\zeta)|^{\check{q}_1}}{|\xi-\zeta|^{\check{q}_1/2+1}}d\zeta\Big)^{1/\check{q}_1}\cdot|(\partial_t\varPhi)_\theta(t-\xi)|\cdot|f(\varsigma; t)H(t)|dtd\xi\\&\leq \theta^{-\frac{1}{2}}\int_{\mathbb{R}}\int_{\varsigma-1}^{\varsigma+1}\Big(\int_{\varsigma-1}^{\varsigma+1}\frac{|g(\xi)-g(\zeta)|^{\check{q}_1}}{|\xi-\zeta|^{\check{q}_1/2+1}}d\zeta\Big)^{1/\check{q}_1}\cdot|(\partial_t\varPhi)_\theta(t-\xi)|\cdot|f(\varsigma; t)H(t)|dtd\xi\\&\leq \theta^{-\frac{1}{2}}\Big(\int_{\mathbb{R}}\int_{\varsigma-1}^{\varsigma+1}\frac{|g(\xi)-g(\zeta)|^{\check{q}_1}}{|\xi-\zeta|^{\check{q}_1/2+1}}d\zeta d\xi\Big)^{1/\check{q}_1}\|(\partial_t\varPhi)_\theta\|_{L^P}\|f(\varsigma; \cdot)H(\cdot)\|_{L^{\frac{sq'}{s+q'}}(\varsigma-1, \varsigma+1)}\\&\leq C\theta^{-\frac{1}{2}+\frac{1}{P}-1}\Big(\int_{\mathbb{R}}\int_{\varsigma-1}^{\varsigma+1}\frac{|g(\xi)-g(\zeta)|^{\check{q}_1}}{|\xi-\zeta|^{\check{q}_1/2+1}}d\zeta d\xi\Big)^{1/\check{q}_1}\cdot\|f(\varsigma; \cdot)\|_{L^s(\mathbb{T})}\cdot\|H\|_{L^{q'}(\varsigma-1, \varsigma+1)},
\end{align*}
where $(\partial_t\varPhi)_\theta(\cdot)=\frac{1}{\theta}\partial_t\varPhi(\frac{\cdot}{\theta})$ and we have used Young's inequality with $\frac{1}{\check{q}_1}+\frac{1}{P}+\frac{1}{s}+\frac{1}{q'}=2$ in the fifth step. Note that $P<2$. Thus, integrating w.r.t. $\theta$ and $\varsigma$ and applying H\"{o}lder's inequality, we conclude
\begin{align*}
  \Big|\int_{\mathbb{R}}(g-S^t(g))\cdot S^t(f)(t; t)\cdot H(t)dt\Big|\leq C[g]_{W^{\frac{1}{2}, \check{q}_1}(\mathbb{R})}\|f\|_{L^{\check{q}_2}(\mathbb{R} ;L^s(\mathbb{T}))}\|H\|_{L^{q'}(\mathbb{R})},
\end{align*}
which gives exactly \eqref{parabolic_es_special}.

The desired result follows from \eqref{parabolic_ineq_triangle} and \eqref{parabolic_conver_smoothing_es_term1}--\eqref{parabolic_es_special}.
\end{proof}

\begin{corollary}\label{parabolic_coro_smoothing_main}
  Suppose the assumptions of Lemma \ref{parabolic_pre_smooth_lem_4} hold and $\psi\in C^\infty_c(\Omega_T)$. Then for $\varepsilon>0$,
  \begin{itemize}
  \item [1).] if $1\leq p\leq 2\leq q\leq \infty$, $s>2$,
    \begin{align}\label{parabolic_conver_smoothing_coro_es_1}
      \begin{split}
              &\quad\|[g\cdot S_\varepsilon(f)-S_\varepsilon(g\cdot f)]^\varepsilon\cdot\psi\|_{L^1(\Omega_T\setminus\Omega_T^{1, \varepsilon})}\leq C\varepsilon[g]_{\mathscr{H}(\Omega_T; \bm{L^\infty})}\\&\qquad\cdot\{\|f\|_{L^{2, p^*}(\Omega_T; \bm{L^{s, 2}})}\|\psi\|_{L^{2, p'}(\Omega_T)}+\|f\|_{L^{\frac{2q}{q-2}, 2}(\Omega_T; \bm{L^{s, 2}})}\|\psi\|_{L^{q, 2}(\Omega_T)}\},
      \end{split}
    \end{align}
where $[\,\cdot\,]_{\mathscr{H}(\Omega_T; \bm{L^\infty})}$ is given as \eqref{parabolic_pre_def_C_seminorm} and $C$ depends only on $d, q, s$;
\item [2).] if $1\leq p\leq 2\leq q\leq \infty$, $s>2$,
    \begin{align}\label{parabolic_conver_smoothing_coro_es_3}
          \begin{split}
              &\quad\|\{S_\varepsilon([g\cdot S_\varepsilon(h)-S_\varepsilon(g\cdot h)]\cdot f)\}^\varepsilon\cdot\psi\|_{L^1(\Omega_T\setminus\Omega_T^{1, \varepsilon})}\leq C\varepsilon[g]_{\mathscr{H}(\Omega_T; \bm{L^\infty})}\\&\qquad\cdot\|h\|_{L^\infty(\Omega_T; \bm{L^{s, 2}})}\cdot \{\|f\|_{L^{2, p^*}(\Omega_T; \bm{L^\infty})}\|\psi\|_{L^{2, p'}(\Omega_T)}+\|f\|_{L^{\frac{2q}{q-2}, 2}(\Omega_T; \bm{L^{\infty}})}\|\psi\|_{L^{q, 2}(\Omega_T)}\},
      \end{split}
    \end{align}
where $C$ depends only on $d, q, s$;
\item [3).] for $p_0=\frac{2d}{d-1}$, $q_0=\frac{2d}{d+1}$, if $s>1$, $r\geq 1$,
  \begin{align}\label{parabolic_conver_smoothing_coro_es_4}
    \begin{split}
      &\quad\|[\{g\cdot S_\varepsilon(f)-S_\varepsilon(g\cdot f)\}\cdot S_\varepsilon(h)]^\varepsilon\|_{L^1(\Omega_T\setminus\Omega_T^{1, \varepsilon})}\leq C\varepsilon[g]_{\mathscr{H}(\Omega_T; \bm{L^\infty})}\\&\cdot\{\|f\|_{L^{2, q_0^*}(\Omega_T; \bm{L^{s_2, r_2}})}\|h\|_{L^{2, p_0}(\Omega_T; \bm{L^{s_3, r_3}})}+\|f\|_{L^{4, 2}(\Omega_T; \bm{L^{s_2, r_2}})}\|h\|_{L^{4, 2}(\Omega_T; \bm{L^{s_3, r_3}})}\},
    \end{split}
  \end{align}
  where $C$ depends only on $d, s$.
  \end{itemize}
\end{corollary}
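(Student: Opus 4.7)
The plan is to deduce all three estimates of Corollary~\ref{parabolic_coro_smoothing_main} from Lemmas~\ref{parabolic_pre_smooth_lem_3} and~\ref{parabolic_pre_smooth_lem_4} (together with Lemma~\ref{parabolic_pre_smooth_lem_2} for part~2) by choosing exponents carefully and pairing via H\"{o}lder's inequality in mixed norms. The common starting point is the decomposition
\[
g\cdot S_\varepsilon(f)-S_\varepsilon(g\cdot f)=(g-S_\varepsilon(g))\cdot S_\varepsilon(f)+\bigl[S_\varepsilon(g)\cdot S_\varepsilon(f)-S_\varepsilon(g\cdot f)\bigr],
\]
so that Lemma~\ref{parabolic_pre_smooth_lem_4} applies to the first piece and Lemma~\ref{parabolic_pre_smooth_lem_3} to the second. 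Inspecting their proofs, each lemma further splits its left-hand side into a \emph{spatial} part estimated via Poincar\'{e}'s inequality and controlled by $\|\nabla_x g\|_{L^{q_1,p_1}(\bm{L^\infty})}$ (with $q_1=\infty$, $p_1=d$ to match the seminorm) and a \emph{temporal} part estimated via the finite-difference characterisation of $W^{1/2,2}$ and controlled by $[g]_{W^{1/2,\check q_1}(L^{\check p_1}(\bm{L^\infty}))}$ (with $\check q_1=2$, $\check p_1=\infty$); both are jointly majorised by $[g]_{\mathscr{H}(\Omega_T;\bm{L^\infty})}$ via~\eqref{parabolic_pre_def_C_seminorm}.

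For~\eqref{parabolic_conver_smoothing_coro_es_1} I would pair the two pieces with $\psi$ using two different H\"{o}lder inequalities:
\[
\|F^\varepsilon_{\mathrm{sp}}\psi\|_{L^1}\leq\|F^\varepsilon_{\mathrm{sp}}\|_{L^{2,p}(\Omega_T\setminus\Omega_T^{1,\varepsilon})}\|\psi\|_{L^{2,p'}(\Omega_T)},\qquad
\|F^\varepsilon_{\mathrm{tm}}\psi\|_{L^1}\leq\|F^\varepsilon_{\mathrm{tm}}\|_{L^{q/(q-1),2}(\Omega_T\setminus\Omega_T^{1,\varepsilon})}\|\psi\|_{L^{q,2}(\Omega_T)}.
\]
In the first I apply Lemmas~\ref{parabolic_pre_smooth_lem_3}/\ref{parabolic_pre_smooth_lem_4} with $q=2$, $q_2=2$, $p_2=p^*$, producing the first summand on the right-hand side; in the second, $\check q_1=2$, $\check p_1=\infty$ force $\check p_2=2$, $\check q_2=2q/(q-2)$, producing the second summand. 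The hypothesis $s>2$ is exactly what is needed to satisfy $s\geq\max\{p,q\}$ and $1/s<1/2+1/\check q_2$ in the two applications. For~\eqref{parabolic_conver_smoothing_coro_es_4} no $\psi$-pairing is required: I would invoke the full three-factor form of the lemmas with $q=p=1$, taking $q_1=\infty$, $p_1=d$, $q_2=q_3=2$, $p_2=p_3=p_0=q_0^*$ (so $\tfrac{1}{d}+\tfrac{2}{p_0}=1$) for the first summand and $\check q_1=2$, $\check p_1=\infty$, $\check q_2=\check q_3=4$, $\check p_2=\check p_3=2$ for the second.

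Part~2 is the most delicate because of the outer $S_\varepsilon$. Here I would first apply Lemma~\ref{parabolic_pre_smooth_lem_2} (in its $f\equiv 1$ form) to replace $\|[S_\varepsilon(K)]^\varepsilon\|_{L^{q_0,p_0}}$ by $\|K\|_{L^{q_0,p_0}(\Omega_T;\bm{L^{s,p_0}})}$ for $K=[g\cdot S_\varepsilon(h)-S_\varepsilon(g\cdot h)]\cdot f$; then factor out $\|f\|_{\bm{L^\infty}}$ by H\"{o}lder in $(y,\tau)$ and apply H\"{o}lder in $(x,t)$ choosing the conjugate pairs $(\infty,d)\times(2,p^*)$ or $(2,\infty)\times(2q/(q-2),2)$ to match the two summands of~\eqref{parabolic_conver_smoothing_coro_es_3}. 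What remains is a mixed-norm estimate for $g\cdot S_\varepsilon(h)-S_\varepsilon(g\cdot h)$ \emph{without} the oscillating substitution $(y,\tau)=(x/\varepsilon,t/\varepsilon^2)$; the Poincar\'{e}-plus-finite-difference argument used in Lemmas~\ref{parabolic_pre_smooth_lem_3}--\ref{parabolic_pre_smooth_lem_4} applies verbatim up to (but not including) the self-evaluation step $(x,t;x,t)$, producing exactly the required bound, with $\|h\|_{L^\infty(\Omega_T;\bm{L^{s,p_0}})}$ absorbed into $\|h\|_{L^\infty(\Omega_T;\bm{L^{s,2}})}$ through the embedding $L^2(\mathbb{T}^d)\hookrightarrow L^{p_0}(\mathbb{T}^d)$ valid for $p_0\leq 2$ on the unit torus. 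The main obstacle I anticipate is precisely this bookkeeping in part~2: tracking the mediator exponent $\bm{L^{s,p_0}}$ produced by Lemma~\ref{parabolic_pre_smooth_lem_2} through the two H\"{o}lder steps and into the non-$\varepsilon$ commutator estimate, while verifying all admissibility constraints ($s>2$, $p_1>1$, $\check p_1=\infty$, $r\geq p$, etc.) simultaneously.
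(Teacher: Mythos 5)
Your proposal is correct and takes essentially the same route as the paper: the paper likewise reduces all three estimates to the spatial (Poincar\'{e}-based) and temporal ($W^{\frac12,2}$ finite-difference) commutator estimates inside the proofs of Lemmas \ref{parabolic_pre_smooth_lem_3} and \ref{parabolic_pre_smooth_lem_4}, writing $g\cdot S_\varepsilon(f)-S_\varepsilon(g\cdot f)=[g\cdot S_\varepsilon(f)-S^t_\varepsilon(g\cdot S^x_\varepsilon(f))]+[S^t_\varepsilon(g\cdot S^x_\varepsilon(f)-S^x_\varepsilon(g\cdot f))]$ and pairing the temporal and spatial pieces with $\psi$ in $L^{q',2}\times L^{q,2}$ and $L^{2,p}\times L^{2,p'}$ respectively, which is only a cosmetic regrouping of your decomposition $(g-S_\varepsilon(g))S_\varepsilon(f)+[S_\varepsilon(g)S_\varepsilon(f)-S_\varepsilon(gf)]$, and your exponent bookkeeping (including part 3, where the lemma statements apply directly with $q_0^*=p_0$) checks out. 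The only point to clean up is notational: in part 2 you use $(q_0,p_0)$ as generic output exponents although the paper reserves $p_0=\frac{2d}{d-1}>2$; what Lemma \ref{parabolic_pre_smooth_lem_2} actually produces there is the inner norm $\bm{L^{s,p}}$ with the \emph{output} spatial exponent $p\le 2$ (respectively $2$ for the temporal piece), for which the embedding $\bm{L^{s,2}}\hookrightarrow\bm{L^{s,p}}$ you invoke is indeed valid, whereas with the paper's $p_0$ it would fail.
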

\begin{proof}
  To show \eqref{parabolic_conver_smoothing_coro_es_1}, note that
  \begin{align*}
    &\quad\|[g\cdot S_\varepsilon(f)-S_\varepsilon(g\cdot f)]^\varepsilon\cdot\psi\|_{L^1}\\&\leq\|[g\cdot S_\varepsilon(f)-S^t_\varepsilon(g\cdot S_\varepsilon^x(f))]^\varepsilon\cdot\psi\|_{L^1}+\|[S^t_\varepsilon(g\cdot S^x_\varepsilon(f)-S^x_\varepsilon(g\cdot f))]^\varepsilon\cdot\psi\|_{L^1}\\&\leq \|[g\cdot S_\varepsilon(f)-S^t_\varepsilon(g\cdot S_\varepsilon^x(f))]^\varepsilon\|_{L^{q', 2}}\|\psi\|_{L^{q, 2}}+\|[S^t_\varepsilon(g\cdot S^x_\varepsilon(f)-S^x_\varepsilon(g\cdot f))]^\varepsilon\|_{L^{2, p}}\|\psi\|_{L^{2, p'}},
  \end{align*}
both of which can be handled in the same way as in the proof of Lemmas \ref{parabolic_pre_smooth_lem_3} and \ref{parabolic_pre_smooth_lem_4}. We omit the details. Estimates \eqref{parabolic_conver_smoothing_coro_es_3} and \eqref{parabolic_conver_smoothing_coro_es_4} can be derived similarly. 
\end{proof}

\begin{lemma}
  \label{parabolic_pre_smooth_lem_partt}
  Suppose $X$ is a Banach space and $g\in W^{\frac{1}{2}, r}(0, T; X)$, $1<r<\infty$. Then
  \begin{align*}
    \|\partial_t S^t_\varepsilon(g)\|_{L^r(\frac{\varepsilon^2}{2}, T-\frac{\varepsilon^2}{2}; X)}\leq C\varepsilon^{-1}[g]_{W^{\frac{1}{2}, r}(0, T; X)},
  \end{align*}
  where $S^t_\varepsilon(g)$ is defined in the sense of Bochner integral and $C$ depends only on $r$.
\end{lemma}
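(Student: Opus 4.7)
The plan is to exploit that $\int_{\mathbb{R}}\partial_t\varphi_{1,\varepsilon}(t-\varsigma)\,d\varsigma=0$, which lets us insert the difference $g(\varsigma)-g(t)$ and thereby convert the derivative on the mollifier into the fractional $1/2$-seminorm of $g$. Concretely, since $\varphi_{1,\varepsilon}(s)=\varepsilon^{-2}\varphi_1(s/\varepsilon^2)$ is supported in $(-\varepsilon^2/2,\varepsilon^2/2)$, the Bochner derivative of $S^t_\varepsilon(g)$ exists for $t\in(\varepsilon^2/2, T-\varepsilon^2/2)$ and satisfies
\begin{align*}
\partial_tS^t_\varepsilon(g)(t)=\int_{|t-\varsigma|\leq \varepsilon^2/2}\bigl[g(\varsigma)-g(t)\bigr]\,\partial_t\varphi_{1,\varepsilon}(t-\varsigma)\,d\varsigma.
\end{align*}

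Using the pointwise bound $|\partial_t\varphi_{1,\varepsilon}(t-\varsigma)|\leq C\varepsilon^{-4}\mathbf{1}_{|t-\varsigma|\leq\varepsilon^2/2}$, I would then apply H\"older's inequality in $\varsigma$ with the weight $|t-\varsigma|^{-(1+r/2)/r}$ paired to $|t-\varsigma|^{(1+r/2)/r}$, so as to match the kernel of the Sobolev--Slobodeckij seminorm \eqref{parabolic_pre_def_seminorm}. This yields
\begin{align*}
\|\partial_tS^t_\varepsilon(g)(t)\|_X\leq C\varepsilon^{-4}\biggl(\int_{|t-\varsigma|\leq\varepsilon^2/2}\frac{\|g(\varsigma)-g(t)\|_X^r}{|t-\varsigma|^{1+r/2}}d\varsigma\biggr)^{1/r}\biggl(\int_{|s|\leq\varepsilon^2/2}|s|^{(1+r/2)/(r-1)}ds\biggr)^{1/r'}.
\end{align*}
A direct computation of the second factor gives $C\varepsilon^{3}$ (the exponent $(1+r/2)/(r-1)$ adds to $1$ and is then divided by $r$, producing total power $3$), so the $\varepsilon$-powers collapse to $\varepsilon^{-4}\cdot\varepsilon^{3}=\varepsilon^{-1}$.

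Finally, raising to the $r$-th power, integrating over $t\in(\varepsilon^2/2,T-\varepsilon^2/2)$, and applying Fubini's theorem gives
\begin{align*}
\|\partial_tS^t_\varepsilon(g)\|_{L^r(\varepsilon^2/2,T-\varepsilon^2/2;X)}^r\leq C\varepsilon^{-r}\int_0^T\!\!\int_{|t-\varsigma|\leq\varepsilon^2/2}\frac{\|g(\varsigma)-g(t)\|_X^r}{|t-\varsigma|^{1+r/2}}\,d\varsigma\,dt\leq C\varepsilon^{-r}[g]_{W^{1/2,r}(0,T;X)}^r,
\end{align*}
where the restriction $t\in(\varepsilon^2/2,T-\varepsilon^2/2)$ ensures the $\varsigma$-support lies inside $(0,T)$.

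The proof is essentially routine once the right manipulation is identified; the only subtle point is the book-keeping of $\varepsilon$-powers, which I would verify carefully since it is exactly the cancellation $\varepsilon^{-4}\cdot\varepsilon^{3}=\varepsilon^{-1}$ that produces the claimed bound. The usual technical caveat about strong measurability of $\partial_tS^t_\varepsilon(g)$ in the Bochner sense poses no real difficulty here because $g\in L^r(0,T;X)$ and $\partial_t\varphi_{1,\varepsilon}\in C_c^\infty(\mathbb{R})$, so differentiation under the integral sign is justified.
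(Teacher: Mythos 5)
Your proposal is correct and follows essentially the same route as the paper: insert $g(\varsigma)-g(t)$ using $\int_{\mathbb{R}}\partial_t\varphi_{1,\varepsilon}(t-\varsigma)\,d\varsigma=0$, apply H\"older in $\varsigma$ with the weight $|t-\varsigma|^{\pm(\frac{1}{2}+\frac{1}{r})}$ to match the Sobolev--Slobodeckij kernel, and check that the $\varepsilon$-powers combine to $\varepsilon^{-1}$ before integrating in $t$. The only cosmetic difference is that you bound the kernel by $C\varepsilon^{-4}\mathbf{1}_{|t-\varsigma|\leq\varepsilon^2/2}$ before H\"older, whereas the paper keeps $|(\partial_t\varphi_1)_\varepsilon|^{r'}$ inside the conjugate factor; the bookkeeping ($\varepsilon^{-4}\cdot\varepsilon^{3}=\varepsilon^{-1}$) is the same.
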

\begin{proof}
  For each $t\in (\varepsilon^2/2, T-\varepsilon^2/2)$,
  \begin{align*}
    \partial_tS^t_\varepsilon(g)(t)=\varepsilon^{-2}\int_{\mathbb{R}}(\partial_t\varphi_1)_\varepsilon(t-\varsigma)\cdot g(\varsigma)d\varsigma=\varepsilon^{-2}\int_{\mathbb{R}}(\partial_t\varphi_1)_\varepsilon(t-\varsigma)\cdot\{g(\varsigma)-g(t)\}d\varsigma,
  \end{align*}
where $(\partial_t\varphi_1)_\varepsilon(\varsigma)=\varepsilon^{-2}\partial_t\varphi_1(\varsigma/\varepsilon^2)$ and we have used the fact that $\int_{\mathbb{R}}(\partial_t\varphi_1)_\varepsilon =0$. This, together with the fact $\mathrm{supp}(\partial_t\varphi_1)_\varepsilon\subset (-\varepsilon^2/2, \varepsilon^2/2)$, implies that for $t\in (\varepsilon^2/2, T-\varepsilon^2/2)$
  \begin{align*}
    \|\partial_tS^t_\varepsilon(g)(t)\|_X&\leq \varepsilon^{-2}\int_0^T|(\partial_t\varphi_1)_\varepsilon(t-\varsigma)|\cdot\|g(\varsigma)-g(t)\|_Xd\varsigma\\&\leq \varepsilon^{-2}\Big(\int_0^T|(\partial_t\varphi_1)_\varepsilon(t-\varsigma)|^{r'}|t-\varsigma|^{(\frac{1}{2}+\frac{1}{r})r'}d\varsigma\Big)^{\frac{1}{r'}}\cdot \Big(\int_0^T\frac{\|g(\varsigma)-g(t)\|_X^r}{|\varsigma-t|^{1+r/2}}d\varsigma\Big)^{\frac{1}{r}}\\&\leq C\varepsilon^{-1}\Big(\int_0^T\frac{\|g(\varsigma)-g(t)\|_X^r}{|\varsigma-t|^{1+r/2}}d\varsigma\Big)^{\frac{1}{r}}.
  \end{align*}
The desired result follows directly.
\end{proof}

\begin{lemma}\label{parabolic_conver_pre_lem_1}
  (i). Suppose $f(x, t)\in L^q(\mathbb{R}; L^p(\mathbb{R}^d))$, $1\leq p, q\leq\infty$. Then for any $p\leq p_1\leq\infty$,
  \begin{align}\label{parabolic_pre_es_smoothing}
    \|S^x_\varepsilon(f)\|_{L^q(\mathbb{R}; L^{p_1}(\mathbb{R}^d))}\leq C\varepsilon^{\frac{d}{p_1}-\frac{d}{p}}\|f\|_{L^q(\mathbb{R}; L^p(\mathbb{R}^d))},
  \end{align}
  where $C$ depends only on $d, p, p_1$, and for any $q\leq q_1\leq \infty$,
  \begin{align}\label{parabolic_pre_es_smoothing_t}
    \|S^t_\varepsilon(f)\|_{L^{q_1}(\mathbb{R}; L^{p}(\mathbb{R}^d))}\leq C\varepsilon^{\frac{2}{q_1}-\frac{2}{q}}\|f\|_{L^q(\mathbb{R}; L^p(\mathbb{R}^d))},
  \end{align}
where $C$ depends only on $d, q, q_1$.
  
(ii). Suppose $f\in L^2(\mathbb{R}; W^{2, q}(\mathbb{R}^d))$ and $\partial_t f\in L^2(\mathbb{R}; L^q(\mathbb{R}^d))$. If $1\leq p<\infty$ and $\max\{1, \frac{dp}{d+p}\}\leq q\leq p$, or $p=\infty$ and $d(=\frac{dp}{d+p})<q\leq \infty$, then
\begin{align}\label{parabolic_pre_es_smoothing_2}
  \|\nabla f-S_\varepsilon(\nabla f)\|_{L^2(\mathbb{R}; L^p(\mathbb{R}^d))}\leq C\varepsilon^{1+\frac{d}{p}-\frac{d}{q}}\{\|\nabla^2 f\|_{L^2(\mathbb{R}; L^q(\mathbb{R}^d))}+\|\partial_tf\|_{L^2(\mathbb{R}; L^q(\mathbb{R}^d))}\},
\end{align}
where $C$ depends only on $d, p, q$.
\end{lemma}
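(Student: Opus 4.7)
My plan is to treat part (i) as an immediate corollary of Young's convolution inequality, and to reduce part (ii) to two pieces by splitting $\nabla f-S_\varepsilon(\nabla f)=(\nabla f-S^x_\varepsilon(\nabla f))+S^x_\varepsilon(\nabla f-S^t_\varepsilon(\nabla f))$, handling the spatial and temporal errors separately.

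For part (i), $S^x_\varepsilon(f)(\cdot,t)=\varphi_{2,\varepsilon}\ast f(\cdot,t)$ is a convolution in $x$ alone, so Young's inequality gives $\|S^x_\varepsilon(f)(\cdot,t)\|_{L^{p_1}(\mathbb{R}^d)}\leq\|\varphi_{2,\varepsilon}\|_{L^r(\mathbb{R}^d)}\|f(\cdot,t)\|_{L^p(\mathbb{R}^d)}$ with $1+1/p_1=1/r+1/p$. A direct computation gives $\|\varphi_{2,\varepsilon}\|_{L^r}=C\varepsilon^{d/r-d}=C\varepsilon^{d/p_1-d/p}$, and taking the $L^q$-norm in $t$ proves \eqref{parabolic_pre_es_smoothing}. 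Estimate \eqref{parabolic_pre_es_smoothing_t} is obtained in the same way, using convolution in $t$ with $\varphi_{1,\varepsilon}(s)=\varepsilon^{-2}\varphi_1(s/\varepsilon^2)$, whose $L^r$-norm scales as $\varepsilon^{2/r-2}$ because of the parabolic dilation in time.

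For the spatial error in part (ii), since $\int\varphi_{2,\varepsilon}=1$ and $\mathrm{supp}\,\varphi_{2,\varepsilon}\subset B(0,\varepsilon/2)$, I apply the Poincar\'e inequality \eqref{parabolic_ineq_poincare} with $z=x$ and $r=\varepsilon/2$ to obtain the pointwise bound
\begin{align*}
|\nabla f(x,t)-S^x_\varepsilon(\nabla f)(x,t)|\leq C\int_{B(x,\varepsilon/2)}|\nabla^2 f(y,t)||y-x|^{1-d}dy.
\end{align*}
The truncated Riesz-type kernel $|y|^{1-d}\mathbf{1}_{\{|y|\leq\varepsilon/2\}}$ belongs to $L^a(\mathbb{R}^d)$ with norm $C\varepsilon^{1-d+d/a}$ provided $a<d/(d-1)$, so Young's inequality with $1+1/p=1/a+1/q$ (and the Hardy--Littlewood--Sobolev inequality at the endpoint $q=dp/(d+p)$, or Morrey's embedding in the limit case $p=\infty$, $q>d$) yields
\begin{align*}
\|\nabla f-S^x_\varepsilon(\nabla f)\|_{L^2(\mathbb{R};L^p(\mathbb{R}^d))}\leq C\varepsilon^{1+d/p-d/q}\|\nabla^2 f\|_{L^2(\mathbb{R};L^q(\mathbb{R}^d))}.
\end{align*}

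For the temporal error I use that $S^x_\varepsilon$ commutes with $\nabla_x$ to rewrite $S^x_\varepsilon(\nabla f-S^t_\varepsilon(\nabla f))=\nabla S^x_\varepsilon(f-S^t_\varepsilon(f))$. Young's inequality with the kernel $(\nabla\varphi_2)_\varepsilon$, whose $L^r$-norm equals $C\varepsilon^{d/r-d-1}$, gives $\|\nabla S^x_\varepsilon(g)\|_{L^p}\leq C\varepsilon^{d/p-d/q-1}\|g\|_{L^q}$; meanwhile the fundamental theorem of calculus together with $\int\varphi_{1,\varepsilon}=1$ and $\mathrm{supp}\,\varphi_{1,\varepsilon}\subset(-\varepsilon^2/2,\varepsilon^2/2)$ yields $\|f-S^t_\varepsilon(f)\|_{L^2(\mathbb{R};L^q)}\leq C\varepsilon^2\|\partial_t f\|_{L^2(\mathbb{R};L^q)}$. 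Multiplying these two bounds produces a temporal contribution of the same size $C\varepsilon^{1+d/p-d/q}\|\partial_t f\|_{L^2(\mathbb{R};L^q)}$, and summing with the spatial one gives \eqref{parabolic_pre_es_smoothing_2}. The main delicacy is tracking the admissible range of exponents so that the convolution estimates go through; this is exactly the role of the assumption $\max\{1,dp/(d+p)\}\leq q\leq p$ (resp.\ $d<q$ when $p=\infty$), which simultaneously guarantees $1\leq a\leq\infty$ in Young's inequality and the local integrability of the truncated Riesz kernel.
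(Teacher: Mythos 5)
Your argument is correct and essentially reproduces the paper's proof: part (i) is Young's inequality, and for part (ii) the paper uses the very same decomposition into a spatial error (handled via the Poincar\'e inequality \eqref{parabolic_ineq_poincare} and the truncated kernel $|z|^{1-d}\mathbf{1}_{\{|z|\le \varepsilon/2\}}$, as in the cited reference) plus a temporal error in which the spatial gradient is shifted onto $(\nabla\varphi_2)_\varepsilon$ at cost $\varepsilon^{-1}$ while the factor $\varepsilon^{2}$ is gained from the time mollification acting on $\partial_t f$, exactly as you do. One small caveat: at the corner $q=1$, $p=\tfrac{d}{d-1}$ (which the statement allows) the Hardy--Littlewood--Sobolev inequality you invoke is not available, but the spatial bound still holds there because $\|\nabla f-S^x_\varepsilon(\nabla f)\|_{L^{d/(d-1)}}\le 2\inf_c\|\nabla f-c\|_{L^{d/(d-1)}}\le C\|\nabla^2 f\|_{L^1}$ by the $L^1$ Sobolev inequality, the exponent of $\varepsilon$ being $0$ in that case.
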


\begin{proof}
  Estimates \eqref{parabolic_pre_es_smoothing} and \eqref{parabolic_pre_es_smoothing_t} are direct results of Young's inequality. We emphasize that \eqref{parabolic_pre_es_smoothing} is valid for any $\varphi_2\in C_0^\infty(B(0, 1/2))$, even if assumption \eqref{parabolic_pre_smooth_cond_unit} is not satisfied. To show \eqref{parabolic_pre_es_smoothing_2}, we write
  \begin{align*}
    \|\nabla f-S_\varepsilon(\nabla f)\|_{L^2(\mathbb{R}; L^p(\mathbb{R}^d))}\leq \|\nabla f-S^x_\varepsilon(\nabla f)\|_{L^2(\mathbb{R}; L^p(\mathbb{R}^d))}+\|S^x_\varepsilon(\nabla f)-S_\varepsilon(\nabla f)\|_{L^2(\mathbb{R}; L^p(\mathbb{R}^d))}.
  \end{align*}
  For the first term, by rescaling and Young's inequality, one can show that (\cite{Xu2019_stratified2})
  \begin{align}
    \label{parabolic_es_F_first}
    \|\nabla f-S^x_\varepsilon(\nabla f)\|_{L^2(\mathbb{R}; L^p(\mathbb{R}^d))}\leq C\varepsilon^{1+\frac{d}{p}-\frac{d}{q}}\|\nabla^2 f\|_{L^2(\mathbb{R}; L^q(\mathbb{R}^d))}.
  \end{align}
  On the other hand, it follows from inequality \eqref{parabolic_ineq_poincare} that
  \begin{align*}
    \begin{split}
          |S^x_\varepsilon(\nabla f)(t)-S_{\varepsilon}(\nabla f)(t)|&\leq C\fint_{\{|\varsigma-t|\leq \frac{\varepsilon^2}{2}\}}|S^x_\varepsilon(\nabla f)(t)-S^x_\varepsilon(\nabla f)(\varsigma)|d\varsigma\\&\leq C\varepsilon^2\fint_{\{|\varsigma-t|\leq \frac{\varepsilon^2}{2}\}}|\partial_t S^x_\varepsilon(\nabla f)(\varsigma)|d\varsigma,
    \end{split}
  \end{align*}
  which, together with \eqref{parabolic_pre_es_smoothing}, yields
  \begin{align*}
    \|S^x_\varepsilon(\nabla f)-S_\varepsilon(\nabla f)\|_{L^2(\mathbb{R}; L^p(\mathbb{R}^d))}&\leq C\varepsilon\Big\|\fint_{\{|\varsigma-t|\leq \frac{\varepsilon^2}{2}\}}| (\nabla \varphi_2)_\varepsilon*(\partial_tf)(\varsigma)|d\varsigma\Big\|_{L^2(\mathbb{R}; L^p(\mathbb{R}^d))}\\&\leq C\varepsilon^{1+\frac{d}{p}-\frac{d}{q}}\|\partial_tf\|_{L^2(\mathbb{R}; L^q(\mathbb{R}^d))}.
  \end{align*}
This completes the proof.
\end{proof}

In particular, for $q_0= \frac{2d}{d+1}$, we have the following estimates
  \begin{align}\label{parabolic_es_Sf}
    \begin{split}
      \|S^x_\varepsilon(f)\|_{L^2(\mathbb{R}^{d+1})}&\leq C\varepsilon^{-1/2}\|f\|_{L^2(\mathbb{R}; L^{q_0}(\mathbb{R}^d))}, \\
    \|\nabla f-S_\varepsilon(\nabla f)\|_{L^2(\mathbb{R}^{d+1})}&\leq C\varepsilon^{1/2}\{\|\nabla^2 f\|_{L^2(\mathbb{R}; L^{q_0}(\mathbb{R}^d))}+\|\partial_tf\|_{L^2(\mathbb{R}; L^{q_0}(\mathbb{R}^d))}\}.
    \end{split}
  \end{align}

\begin{lemma}\label{parabolic_conver_lem_u0-Su0}
  Let $g\in L^\infty(\Omega_T; \bm{L^2})$, $\partial_t f\in L^2(0, T; L^{q_0}(\Omega))$, $\nabla f\in L^2(0, T; \dot{W}^{1, q_0}(\Omega))$, and $h\in L^2(0, T; L^{p_0}(\Omega))$. Suppose further $\mathrm{supp}(h) \subset\Omega_T\setminus\Omega_T^{2, \varepsilon}$. Then for $\varepsilon>0$,
  \begin{align*}&\quad\Big|\iint_{\Omega_T}(\nabla f-S_\varepsilon(\nabla f))\cdot [S_\varepsilon(g\cdot h)]^\varepsilon dxdt\Big|\\&\leq C\varepsilon\{\|\nabla^2f\|_{L^{2, q_0}(\Omega_T)}+\|\partial_tf\|_{L^{2, q_0}(\Omega_T)}\}\cdot\|g\|_{L^\infty(\Omega_T; \bm{L^2})}\cdot\|h\|_{L^{2, p_0}(\Omega_T)},\end{align*}
  where $C$ depends only on $d$.
\end{lemma}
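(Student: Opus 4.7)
The plan is to use Fubini's theorem to swap the outer and inner integrations, reducing the estimate to an $L^{2,q_0}$-bound on a scalar function, and then to exploit the splitting of $g$ into its $(y,\tau)$-mean and oscillating parts.

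By Fubini's theorem, one writes
\[I=\iint_{\Omega_T}h(w,\sigma)\,\tilde J(w,\sigma)\,dw\,d\sigma,\qquad \tilde J(w,\sigma):=\iint F(x,t)\,g(w,\sigma;x/\varepsilon,t/\varepsilon^2)\varphi_{2,\varepsilon}(x-w)\varphi_{1,\varepsilon}(t-\sigma)\,dx\,dt,\]
where $F:=\nabla f-S_\varepsilon(\nabla f)$. Since $1/p_0+1/q_0=1$, H\"older's inequality gives $|I|\leq \|h\|_{L^{2,p_0}(\Omega_T)}\|\tilde J\|_{L^{2,q_0}(\Omega_T)}$, so the task reduces to proving
\[\|\tilde J\|_{L^{2,q_0}}\leq C\varepsilon\,\|g\|_{L^\infty(\bm{L^2})
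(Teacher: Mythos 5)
Your opening reduction is fine as far as it goes: the Fubini swap is legitimate, and H\"older in $(w,\sigma)$ with exponents $(p_0,q_0)$ correctly reduces the lemma to the bound $\|\tilde J\|_{L^{2,q_0}}\leq C\varepsilon\|g\|_{L^\infty(\Omega_T;\bm{L^2})}\{\|\nabla^2 f\|_{L^{2,q_0}}+\|\partial_t f\|_{L^{2,q_0}}\}$. But the proposal stops exactly there (it is cut off mid-formula), and the part you gesture at — ``exploit the splitting of $g$ into its $(y,\tau)$-mean and oscillating parts'' — is where the whole difficulty of the lemma lives, and the splitting alone does not deliver it. The mean part is indeed easy: it gives $g_2\cdot$(a mollification of $F:=\nabla f-S_\varepsilon(\nabla f)$), and $\|F\|_{L^{2,q_0}}\le C\varepsilon\{\cdots\}$ by Lemma \ref{parabolic_conver_pre_lem_1}(ii) finishes it. For the mean-zero part, however, the only information you have kept on $g$ is $L^\infty$ in $(x,t)$ and $L^2$ in the cell variables, and the natural estimate (Cauchy--Schwarz in $(x,t)$ against the kernel) gives $|\tilde J_1(w,\sigma)|\le C\|g\|_{L^\infty(\Omega_T;\bm{L^2})}\big(S_\varepsilon(|F|^2)(w,\sigma)\big)^{1/2}$ up to kernel reflection; you would then need $\|(S_\varepsilon(|F|^2))^{1/2}\|_{L^{2,q_0}}\lesssim\|F\|_{L^{2,q_0}}$, i.e.\ boundedness of mollification on $L^{q_0/2}$ with $q_0/2=\frac{d}{d+1}<1$, which is false (Minkowski/Young and maximal-function bounds fail below exponent $1$). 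Worse, viewed as a bilinear estimate in $(F,g_1)$ with only $F\in L^{2,q_0}$ and $g_1\in L^\infty(\Omega_T;\bm{L^2})$, the desired bound on $\tilde J_1$ is simply not true (take $g_1$ with an $L^2$ but not $L^{p_0}$ singularity in the cell and $F$ a shrinking bump located on that singularity), so no amount of H\"older juggling at this stage can close the argument; you must use the specific structure of $F=\nabla f-S_\varepsilon(\nabla f)$ beyond its $L^{2,q_0}$ smallness.

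This is precisely why the paper's proof does more than split off the mean: it solves the auxiliary cell problem $\partial_\tau u-\Delta_y u=g_1$ to write $g_1=\partial_{y_k}G_k+\partial_\tau G_{d+1}$, uses parabolic regularity plus interpolation to upgrade the cell integrability to $G_k,\nabla_yG_{d+1}\in L^\infty(\Omega_T;\bm{L^{4,p_0}})$, and then integrates by parts in $(x,t)$ (via \eqref{parabolic_intro_iden_composition}), transferring the gained factors $\varepsilon$, $\varepsilon^2$ either onto derivatives of $f$ (controlled by $\|\nabla^2f\|_{L^{2,q_0}}$, $\|\partial_t f\|_{L^{2,q_0}}$) or onto derivatives of the mollifying kernels, and invoking Lemma \ref{parabolic_pre_smooth_lem_2} — whose hypotheses require exactly the lifted cell integrability, not just $\bm{L^2}$. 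If you want to salvage your route, you would have to import this same lifting and integration-by-parts machinery inside your $\tilde J$ (with $(w,\sigma)$ frozen), at which point you are reproducing the paper's argument; as written, the proposal asserts the key estimate without a workable mechanism for the oscillating part, so it has a genuine gap.
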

\begin{proof}
  We apply a regularity lifting argument of parabolic type to $g$. Write $g=g_1+g_2$, where
  \begin{align*}
    g_2(x, t)=\int_{\mathbb{T}^{d+1}}g(x, t; y, \tau)dyd\tau,
  \end{align*}
such that, $\int_{\mathbb{T}^{d+1}}g_1(x, t; \cdot, \cdot)=0$ for each $(x, t)$. Let $u(x, t; y, \tau)$ be the solution to
  \begin{equation*}
    \begin{cases}
      \partial_\tau u-\Delta_y u=g_1\quad \mathrm{in}~\mathbb{R}^{d+1},\\
     u \textrm{ is 1-periodic in } (y, \tau),\textrm{ and }\int_{\mathbb{T}^{d+1}} u(x, t; \cdot, \cdot)=0,
    \end{cases}
  \end{equation*}
which satisfies that, for each $(x, t)$
\begin{align*}
 \|u(x, t; \cdot, \cdot)\|_{L^2(\mathbb{T}^1; H^2(\mathbb{T}^d))\cap H^1(\mathbb{T}^1; L^2(\mathbb{T}^d))}\leq C\|g_1(x, t; \cdot, \cdot)\|_{L^2(\mathbb{T}^{d+1})} \leq C\|g(x, t; \cdot, \cdot)\|_{L^2(\mathbb{T}^{d+1})}.
\end{align*}
Set $G_k=-\partial_{y_k}u$ for $1\leq k\leq d$, and $G_{d+1}=u$. Then $\sum_{1\leq k\leq d}\partial_{y_k} G_k+\partial_\tau G_{d+1}=g_1$ and for each $(x, t)$
\begin{align*}
    \sum_{1\leq k\leq d}\|G_k\|_{L^2(\mathbb{T}^1; H^1(\mathbb{T}^d))\cap \bm{L^{\infty, 2}}}+\|G_{d+1}\|_{L^2(\mathbb{T}^1; H^2(\mathbb{T}^d))\cap H^1(\mathbb{T}^1; L^2(\mathbb{T}^d))}\leq C\|g\|_{L^2(\mathbb{T}^{d+1})},
\end{align*}
  which, by interpolation, yields that
  \begin{align}\label{parabolic_es_interpolation}
    \sum_{1\leq k\leq d}\|G_k\|_{L^\infty(\Omega_T; \bm{L^{4, p_0}})}+\|\nabla_yG_{d+1}\|_{L^\infty(\Omega_T; \bm{L^{4, p_0}})}\leq C\|g\|_{L^\infty(\Omega_T; \bm{L^2})},
  \end{align}
where $2\leq p_0\leq 4$ as $d\geq 2$. Therefore, with this expression of $g$, we have
\begin{align*}
    &\quad\Big|\iint_{\Omega_T}(\nabla f-S_\varepsilon(\nabla f))\cdot [S_\varepsilon(g\cdot h)]^\varepsilon dxdt\Big|\leq\Big|\iint_{\Omega_T}(\nabla f-S_\varepsilon(\nabla f))\cdot [S_\varepsilon(\partial_{y_k}G_k\cdot h)]^\varepsilon dxdt\Big|\\&
    +\Big|\iint_{\Omega_T}(\nabla f-S_\varepsilon(\nabla f))\cdot [S_\varepsilon(\partial_\tau G_{d+1}\cdot h)]^\varepsilon dxdt\Big|+\Big|\iint_{\Omega_T}(\nabla f-S_\varepsilon(\nabla f))\cdot [S_\varepsilon(g_2\cdot h)]^\varepsilon dxdt\Big|\\&\doteq K_1+K_2+K_3.
\end{align*}
Here we have omitted the summation w.r.t. $k$ from $1$ to $d$.

For $K_1$, it follows from \eqref{parabolic_intro_iden_composition} and integration by parts that
\begin{align*}
  K_1&=\varepsilon\Big|\iint_{\Omega_T}(\nabla f-S_\varepsilon(\nabla f))\cdot \{\partial_k[S_\varepsilon(G_k\cdot h)]^\varepsilon-[\partial_{x_k}S_\varepsilon(G_k\cdot h)]^\varepsilon\} dxdt\Big|\\&\leq \varepsilon\Big|\iint_{\Omega_T}\partial_k(\nabla f-S_\varepsilon(\nabla f))\cdot [S_\varepsilon(G_k\cdot h)]^\varepsilon dxdt\Big|\\&\qquad+\varepsilon\Big|\iint_{\Omega_T}(\nabla f-S_\varepsilon(\nabla f))\cdot [\partial_{x_k}S_\varepsilon(G_k\cdot h)]^\varepsilon dxdt\Big|\\&\leq C\varepsilon\|\nabla^2f\|_{L^{2, q_0}(\Omega_T)}\|[S_\varepsilon(G_k\cdot h)]^\varepsilon\|_{L^{2, p_0}(\Omega_T)}\\&\qquad+\|\nabla f-S_\varepsilon(\nabla f)\|_{L^{2, q_0}(\Omega_T\setminus\Omega_T^{1, \varepsilon})}\cdot \|[(\partial_k\varphi_2)_\varepsilon *S^t_\varepsilon(G_k\cdot h)]^\varepsilon\|_{L^{2, p_0}(\Omega_T)}\\&\leq C\varepsilon\{\|\nabla^2f\|_{L^{2, q_0}(\Omega_T)}+\|\partial_tf\|_{L^{2, q_0}(\Omega_T)}\}\cdot\|G_k\cdot h\|_{L^{2, p_0}(\Omega_T; \bm{L^{p_0}})}\\&\leq C\varepsilon\{\|\nabla^2f\|_{L^{2, q_0}(\Omega_T)}+\|\partial_tf\|_{L^{2, q_0}(\Omega_T)}\}\cdot\|g\|_{L^\infty(\Omega_T; \bm{L^2})}\cdot\|h\|_{L^{2, p_0}(\Omega_T)},
\end{align*}
where we have applied Lemma \ref{parabolic_pre_smooth_lem_2} and \eqref{parabolic_pre_es_smoothing_2} in the fourth step as well as \eqref{parabolic_es_interpolation} in the last one. Similarly,
\begin{align*}
  K_2&=\varepsilon^2\Big|\iint_{\Omega_T}(\nabla f-S_\varepsilon(\nabla f))\cdot \{\partial_t[S_\varepsilon(G_{d+1}\cdot h)]^\varepsilon-[\partial_tS_\varepsilon(G_{d+1}\cdot h)]^\varepsilon\} dxdt\Big|\\&\leq \varepsilon^2\Big|\iint_{\Omega_T}\partial_t(f-S_\varepsilon(f))\cdot \nabla[S_\varepsilon(G_{d+1}\cdot h)]^\varepsilon dxdt\Big|\\&\qquad+\varepsilon^2\Big|\iint_{\Omega_T}(\nabla f-S_\varepsilon(\nabla f))\cdot [\partial_tS_\varepsilon(G_{d+1}\cdot h)]^\varepsilon dxdt\Big|\\&\leq \varepsilon\iint_{\Omega_T}|\partial_t(f-S_\varepsilon(f))|\cdot |[(\nabla_x\varphi_2)_\varepsilon*S^t_\varepsilon(G_{d+1}\cdot h)]^\varepsilon+[S_{\varepsilon}(\nabla_yG_{d+1}\cdot h)]^\varepsilon| dxdt\\&\qquad+\iint_{\Omega_T}|\nabla f-S_\varepsilon(\nabla f)|\cdot |[(\partial_t\varphi_1)_\varepsilon*S^x_\varepsilon(G_{d+1}\cdot h)]^\varepsilon| dxdt\\&\leq C\varepsilon\{\|\partial_tf\|_{L^{2, q_0}(\Omega_T)}+\|\nabla^2f\|_{L^{2, q_0}(\Omega_T)}\}\cdot\|g\|_{L^\infty(\Omega_T; \bm{L^2})}\cdot\|h\|_{L^{2, p_0}(\Omega_T)},
\end{align*}
where $(\partial_t\varphi_1)_\varepsilon(t)=\varepsilon^{-2}(\partial_t\varphi_1)(t/\varepsilon^2)$. Lastly, it is easy to see from H\"{o}lder's inequality and \eqref{parabolic_pre_es_smoothing_2}
\begin{align*}
  K_3&\leq C\varepsilon\{\|\nabla^2f\|_{L^{2, q_0}(\Omega_T)}+\|\partial_tf\|_{L^{2, q_0}(\Omega_T)}\}\cdot\|g_2\|_{L^\infty(\Omega_T)}\cdot\|h\|_{L^{2, p_0}(\Omega_T)}\\&\leq C\varepsilon\{\|\nabla^2f\|_{L^{2, q_0}(\Omega_T)}+\|\partial_tf\|_{L^{2, q_0}(\Omega_T)}\}\cdot\|g\|_{L^\infty(\Omega_T; \bm{L^2})}\cdot\|h\|_{L^{2, p_0}(\Omega_T)}.
\end{align*}
By combining the estimates of $K_1$-$K_3$, we obtain the desired result.
\end{proof}

\subsection{Embeddings and boundary layers}
\label{parabolic_sec_boundary-layers}
In this subsection, we establish some embedding results for functions on $\Omega_T$ and derive the estimates of boundary layers.

\begin{lemma}[\cite{Shen2018_book, Xu2019_stratified2}]\label{parabolic_conver_boundary_lem_spatial}
   Suppose $u\in L^2(0, T; \dot{W}^{1, p}(\Omega))$, $\frac{2d}{d+2}\leq p\leq q_0=\frac{2d}{d+1}$. Then
  \begin{align*}
    \|u\|_{L^2(\Omega_\delta\times (0, T))}\leq C \delta^{1+\frac{d}{2}-\frac{d}{p}}\|u\|_{L^2(0, T; \dot{W}^{1, p}(\Omega))},
  \end{align*}
  where $C$ depends only on the Lipschitz character of $\Omega$.
\end{lemma}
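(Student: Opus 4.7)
The plan is to reduce the two-variable claim to a pointwise-in-$t$ spatial estimate and then to combine the Sobolev embedding $\dot W^{1,p}(\Omega)\hookrightarrow L^{p^*}(\Omega)$ with the sharp Sobolev trace theorem. By Fubini,
\[
\|u\|_{L^2(\Omega_\delta\times(0,T))}^2=\int_0^T\|u(\cdot,t)\|_{L^2(\Omega_\delta)}^2\,dt,
\]
so it suffices to establish, for each $v\in\dot W^{1,p}(\Omega)$ with $p\in[2d/(d+2),q_0]$, the spatial bound
\[
\|v\|_{L^2(\Omega_\delta)}\leq C\delta^{1+d/2-d/p}\|v\|_{\dot W^{1,p}(\Omega)},
\]
and then square and integrate in $t$.

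For the spatial estimate I would treat the two endpoints of the admissible $p$-range separately and interpolate for intermediate values. At the lower endpoint $p=2d/(d+2)$ we have $p^*=2$ and the claimed exponent is $0$, so the bound reduces to the trivial chain $\|v\|_{L^2(\Omega_\delta)}\leq\|v\|_{L^{p^*}(\Omega)}\leq C\|v\|_{\dot W^{1,p}(\Omega)}$. At the upper endpoint $p=q_0$ the trace exponent $(d-1)p/(d-p)$ equals exactly $2$, so the Sobolev trace theorem gives $\|v\|_{L^2(\partial\Omega)}\leq C\|v\|_{\dot W^{1,q_0}(\Omega)}$. Parametrizing $\Omega_\delta$ near $\partial\Omega$ via normal coordinates $x=\sigma+s\nu(\sigma)$, $\sigma\in\partial\Omega$, $s\in(0,\delta)$ (valid once $\delta$ is smaller than the reach of $\partial\Omega$), and applying the trace inequality on each level surface $\Gamma_s=\{\mathrm{dist}(\cdot,\partial\Omega)=s\}$ with constants uniform in $s$, I obtain
\[
\|v\|_{L^2(\Omega_\delta)}^2\lesssim\int_0^\delta\|v\|_{L^2(\Gamma_s)}^2\,ds\lesssim\delta\,\|v\|_{\dot W^{1,q_0}(\Omega)}^2,
\]
which matches the exponent $1/2=1+d/2-d/q_0$.

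For $p$ strictly between the two endpoints I would interpolate. Complex interpolation of the homogeneous Sobolev scale yields $[\dot W^{1,2d/(d+2)},\dot W^{1,q_0}]_\theta=\dot W^{1,p_\theta}$ with $1/p_\theta=(d+2-\theta)/(2d)$, and a Riesz--Thorin-type argument applied to the restriction operator $v\mapsto v|_{\Omega_\delta}$ combines the endpoint norm bounds into $\delta^{\theta/2}$, which equals $\delta^{1+d/2-d/p_\theta}$ exactly. The main obstacle I anticipate is the rigorous justification of this interpolation step, since the norm of the restriction operator depends on $\delta$ and $\dot W^{1,p}$ is a sum-type norm of $L^{p^*}$ and $\|\nabla\cdot\|_{L^p}$; this is the technical point addressed in \cite{Shen2018_book} and \cite{Xu2019_stratified2}, which I would invoke directly. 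An alternative that avoids interpolation is to argue directly in local charts after flattening $\partial\Omega$, writing $v(\sigma+s\nu)=v(\sigma)+\int_0^s\partial_\nu v(\sigma+t\nu)\,dt$ and estimating the trace term via the Sobolev trace inequality and the integral term by H\"older in $s$ combined with Minkowski in the tangential directions, then summing over a boundary covering.
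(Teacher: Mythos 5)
The paper itself does not prove this lemma (it is quoted from \cite{Shen2018_book,Xu2019_stratified2}), so I am judging your argument on its own merits. Your reduction to a fixed time slice and your exponent bookkeeping are correct: at $p=\frac{2d}{d+2}$ one has $p^*=2$ and the exponent $0$, at $p=q_0$ the trace exponent $\frac{(d-1)p}{d-p}$ is exactly $2$ and the exponent is $\frac12$, and your interpolation arithmetic $\theta/2=1+\frac d2-\frac d{p_\theta}$ checks out. Two points, however, are genuine gaps. First, the lemma is stated for a Lipschitz domain with a constant depending only on the Lipschitz character: normal coordinates $x=\sigma+s\nu(\sigma)$ and a positive reach are not available, and level sets of the distance function need not be uniformly Lipschitz, so ``trace constants uniform in $s$'' on $\Gamma_s$ is not automatic. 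The standard fix is the one you only mention in passing: work in local graph charts and use vertical translates of the boundary graph as the parallel surfaces; also the trace inequality must be used in its scale-invariant homogeneous form $\|v\|_{L^2(\partial\Omega)}\le C\{\|\nabla v\|_{L^{q_0}(\Omega)}+\|v\|_{L^{q_0^*}(\Omega)}\}$ so that no dependence on $\operatorname{diam}\Omega$ enters.

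The more serious gap is the intermediate range $\frac{2d}{d+2}<p<q_0$. The identification $[\dot W^{1,p_1}(\Omega),\dot W^{1,p_2}(\Omega)]_\theta=\dot W^{1,p_\theta}(\Omega)$ with constants independent of $\delta$ and depending only on the Lipschitz character is exactly the nontrivial content of your plan, and it cannot simply be ``invoked directly'' from \cite{Shen2018_book,Xu2019_stratified2}: those references supply boundary-layer estimates of this type, not an interpolation theorem for these homogeneous-type spaces, and citing them for the statement itself would make the proof circular. Your fallback ``alternative'' does not close this gap either: after writing $v(\sigma+s\nu)=v(\sigma)+\int_0^s\partial_\nu v\,dt$, H\"older in $s$ controls the gradient term only by the mixed norm $\bigl\|\,\|\nabla v(\sigma+\cdot\,\nu)\|_{L^{p}_t}\bigr\|_{L^2_\sigma}$, and since $p<2$ Minkowski's inequality runs in the wrong direction to dominate this by $\|\nabla v\|_{L^p(\Omega)}$; the upgrade from $L^p_\sigma$ to $L^2_\sigma$ in the tangential variables must come from a Sobolev/trace inequality on the parallel surfaces, which is available only at $p=q_0$ (for $p<q_0$ the trace lands merely in $L^{(d-1)p/(d-p)}\subsetneq L^2(\partial\Omega)$). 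A workable interpolation-free route for $p<q_0$ is, for instance, to extend $u$ to $\mathbb{R}^d$, use $\dot W^{1,p}(\mathbb{R}^d)\hookrightarrow\dot H^{s}(\mathbb{R}^d)$ with $s=1+\frac d2-\frac dp<\frac12$, and bound $\|u\|_{L^2(\Omega_\delta)}\le\delta^{s}\bigl\||u|\,\mathrm{dist}(\cdot,\partial\Omega)^{-s}\bigr\|_{L^2}\le C_s\delta^{s}\|u\|_{\dot H^s(\mathbb{R}^d)}$ via the fractional Hardy inequality valid for $0<s<\frac12$ (the blow-up of $C_s$ as $s\to\frac12$ is consistent with the separate trace argument needed at $p=q_0$). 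As written, the interpolation step is a missing ingredient rather than a routine citation.
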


\begin{lemma}\label{parabolic_conver_boundary_lem_embedding}
  Let $\Omega$ be a bounded Lipschitz domain in $\mathbb{R}^d$, $d\geq 2$. Suppose $\nabla u\in L^2(0, T; \dot{W}^{1, p}(\Omega))$ and $\partial_tu\in L^2(0, T; L^p(\Omega))$, where $\frac{2d}{d+2}\leq p<2$, $p>1$. Then
  \begin{align*}
    \|\nabla u\|_{L^q(0, T; L^2(\Omega))}\leq C\{\|\nabla u\|_{L^2(0, T; \dot{W}^{1, p}(\Omega))}+\|\partial_t u\|_{L^2(0, T; L^p(\Omega))}\},
  \end{align*}
  where $\frac{2}{q}=\frac{d}{p}-\frac{d}{2}$ and $C$ depends only on $d, p$ and the Lipschitz character of $\Omega$.
\end{lemma}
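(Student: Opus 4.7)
The plan is to combine a spatial Sobolev embedding with a parabolic trace estimate and then Hölder-interpolate in both space and time. Specifically, I will show (i) $\nabla u \in L^2(0,T; L^{p^*}(\Omega))$ from the Sobolev embedding $\dot W^{1,p} \hookrightarrow L^{p^*}$ (valid since $p < 2 \leq d$), which gives directly
$$\|\nabla u\|_{L^2(0,T; L^{p^*}(\Omega))} \leq C\|\nabla u\|_{L^2(0,T; \dot W^{1,p}(\Omega))};$$
and (ii) an $L^\infty_t L^p_x$ bound for $\nabla u$ via a parabolic trace theorem applied to $u$ itself.

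For step (ii), I would first normalize by replacing $u$ with $u - c(t)$, where $c(t) = \fint_\Omega u(x,t)\,dx$; this leaves $\nabla u$ unchanged, and since $c'(t) = \fint_\Omega \partial_t u(x,t)\,dx$ belongs to $L^2(0,T)$ (by boundedness of $\Omega$), the hypothesis $\partial_t(u-c) \in L^2(0,T; L^p(\Omega))$ is preserved. Poincaré's inequality now puts $u - c$ in $L^2(0,T; W^{1,p}(\Omega))$, and combined with $\nabla^2 u \in L^2(0,T; L^p(\Omega))$ we have $u - c \in L^2(0,T; W^{2,p}(\Omega))$. Applying a Calderón/Stein extension pointwise in $t$ to reduce to the whole space and invoking the parabolic trace embedding
$$L^2(\mathbb{R}; W^{2,p}(\mathbb{R}^d)) \cap H^1(\mathbb{R}; L^p(\mathbb{R}^d)) \hookrightarrow C(\mathbb{R}; W^{1,p}(\mathbb{R}^d)),$$
which rests on the complex-interpolation identity $[W^{2,p}, L^p]_{1/2} = W^{1,p}$, and then restricting back to $\Omega$ yields
$$\|\nabla u\|_{L^\infty(0,T; L^p(\Omega))} \leq C\bigl\{\|\nabla u\|_{L^2(0,T; \dot W^{1,p}(\Omega))} + \|\partial_t u\|_{L^2(0,T; L^p(\Omega))}\bigr\}.$$

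Finally, since $\frac{2d}{d+2} \leq p \leq 2 \leq p^*$, Hölder interpolation between $L^p$ and $L^{p^*}$ gives, at each $t$,
$$\|\nabla u(t)\|_{L^2(\Omega)} \leq \|\nabla u(t)\|_{L^p(\Omega)}^{1-\theta}\,\|\nabla u(t)\|_{L^{p^*}(\Omega)}^{\theta}, \qquad \theta = \frac{d}{p}-\frac{d}{2} = \frac{2}{q}.$$
Raising to the $q$-th power, integrating in $t$, and using $q\theta = 2$ and $q(1-\theta) = q-2$, one gets
$$\|\nabla u\|_{L^q(0,T; L^2(\Omega))}^{q} \leq \|\nabla u\|_{L^\infty(0,T; L^p(\Omega))}^{q-2}\,\|\nabla u\|_{L^2(0,T; L^{p^*}(\Omega))}^{2},$$
which combined with the bounds in (i) and (ii) gives the conclusion. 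The main obstacle is step (ii): on a Lipschitz $\Omega$ with $p < 2$, the standard real-interpolation Lions–Magenes trace theorem only lands in the larger Besov space $B^0_{p,2}$, so one must upgrade via the complex-interpolation identity above, which in turn requires an extension operator simultaneously bounded on $L^p$ and $W^{2,p}$ and compatible with the $t$-variable; the spatial mean normalization is needed because only $\nabla u$, not $u$, is controlled by hypothesis.
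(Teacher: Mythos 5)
There is a genuine gap at your step (ii), and it is not a technicality: the claimed embedding $L^2(\mathbb{R}; W^{2,p})\cap H^1(\mathbb{R}; L^p)\hookrightarrow C(\mathbb{R}; W^{1,p})$ is false precisely in the range $p<2$ that the lemma requires. The temporal trace space of $\{v\in L^2(\mathbb{R};X_1),\ \partial_t v\in L^2(\mathbb{R};X_0)\}$ is the \emph{real} interpolation space $(X_0,X_1)_{1/2,2}$, and with $X_1=W^{2,p}$, $X_0=L^p$ this is the Besov space $B^1_{p,2}$, which for $p<2$ strictly contains $W^{1,p}$ (one has $W^{1,p}\hookrightarrow B^1_{p,2}$ but not conversely). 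Since the trace map is \emph{onto} $B^1_{p,2}$, there exist functions with exactly the hypothesized regularity whose time slices are not in $W^{1,p}$; the complex-interpolation identity $[L^p,W^{2,p}]_{1/2}=W^{1,p}$ is true but irrelevant, because an $L^2$-in-time setting intrinsically produces the $(\cdot,\cdot)_{1/2,2}$ space and there is no mechanism to ``upgrade'' it. Concretely, take $u(t)=e^{t\Delta}u_0$ with $u_0\in \dot B^1_{p,2}$ but $\nabla u_0\notin L^p$: then $\nabla^2 u,\ \partial_t u\in L^2_t L^p_x$ by the heat-semigroup characterization of $\dot B^1_{p,2}$, yet $\|\nabla u(t)\|_{L^p}$ is unbounded as $t\to 0^+$ (boundedness plus reflexivity of $L^p$, $p>1$, would force $\nabla u_0\in L^p$). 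So your asserted bound $\|\nabla u\|_{L^\infty(0,T;L^p)}\leq C\{\cdots\}$ fails, even though the lemma's conclusion is true; the rest of your argument (the spatial Sobolev embedding and the Hölder interpolation with $\theta=\frac dp-\frac d2=\frac 2q$) is fine but rests on this false ingredient.

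The skeleton can be repaired, and the repair essentially reproduces the paper's route: what the trace method really gives (after extension, applied to $v=\nabla u$ with $X_1=\dot W^{1,p}$, $X_0=\dot W^{-1,p}$) is $v\in C([0,T];\dot B^0_{p,2})$, and one must interpolate with this Besov norm in place of $\|\cdot\|_{L^p}$, e.g.\ via blockwise Hölder on Littlewood--Paley pieces ($\|\Delta_j f\|_{L^2}\leq \|\Delta_j f\|_{L^p}^{1-\theta}\|\Delta_j f\|_{L^{p^*}}^{\theta}$ summed in $\ell^2$). The paper instead extends $v=\nabla u$ to $\mathbb{R}^d$, writes $\partial_t v-\Delta v=F\in L^2_t\dot W^{-1,p}$, and splits $v=v_1+v_2$: the Duhamel part $v_1$ is estimated in $L^q_tL^2_x$ by heat-kernel bounds and maximal regularity, while the caloric part $v_2$ has initial data in $\dot B^0_{p,2}$ by the trace method and is handled by Littlewood--Paley decomposition together with the $L^p\to L^2$ smoothing of the heat semigroup. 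Either way, the correct intermediate object is $\dot B^0_{p,2}$, not $L^\infty_tL^p_x$.
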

\begin{proof}
  By setting $v=\nabla u$, it is sufficient to show
  \begin{align*}
    \|v\|_{L^q(0, T; L^2(\Omega))}\leq C\{\|v\|_{L^2(0, T; \dot{W}^{1, p}(\Omega))}+\|\partial_t v\|_{L^2(0, T; \dot{W}^{-1, p}(\Omega))}\}.
  \end{align*}
  Since $\partial\Omega$ is Lipschitz, there exists a linear extension operator $P$, such that, $P$ is bounded from $\dot{W}^{1, p}(\Omega)$ into $\dot{W}^{1, p}(\mathbb{R}^d)$ with compact support and, at the same time, bounded from $\dot{W}^{-1, p}(\Omega)$ into $\dot{W}^{-1, p}(\mathbb{R}^d)$. Denoting $P(v)$ still by $v$, we have
  \begin{align*}
    \|v\|_{L^2(0, T; \dot{W}^{1, p}(\mathbb{R}^d))}+\|\partial_tv\|_{L^2(0, T; \dot{W}^{-1, p}(\mathbb{R}^d))}\leq C\{\|v\|_{L^2(0, T; \dot{W}^{1, p}(\Omega))}+\|\partial_t v\|_{L^2(0, T; \dot{W}^{-1, p}(\Omega))}\}.
  \end{align*}
  Therefore, it is sufficient to prove the case where $\Omega=\mathbb{R}^d$.

  To do this, denote $F=\partial_tv-\Delta v\in L^2(0, T; \dot{W}^{-1, p}(\mathbb{R}^d))$ and write it into $F=\partial_{x_j}f_j$, where $f_j\in L^2(0, T; L^p(\mathbb{R}^d))$. Set $$v_1(t)=\int_0^t e^{(t-s)\Delta}\partial_{x_j}f_j(s)ds,$$ which solves
\begin{align*}
  \partial_tv_1-\Delta v_1=F, \quad v_1(0)=0.
\end{align*}
By the calculation of heat kernel and the maximal regularity, we have
\begin{align*}
  \|v_1\|_{L^q(0, T; L^2(\mathbb{R}^d))}+\|\partial_tv_1\|_{L^2(0, T; \dot{W}^{-1, p}(\mathbb{R}^d))}+\|v_1\|_{L^2(0, T; \dot{W}^{1, p}(\mathbb{R}^d))}\leq C\|F\|_{L^2(0, T; \dot{W}^{-1, p}(\mathbb{R}^d))}.
\end{align*}
For the remaining part $v_2=v-v_1$, it holds that $\partial_tv_2-\Delta v_2=0$ and
\begin{align*}
  \|\partial_tv_2\|_{L^2(0, T; \dot{W}^{-1, p}(\mathbb{R}^d))}+\|v_2\|_{L^2(0, T; \dot{W}^{1, p}(\mathbb{R}^d))}\leq C\{\|\partial_tv\|_{L^2(0, T; \dot{W}^{-1, p}(\mathbb{R}^d))}+\|v\|_{L^2(0, T; \dot{W}^{1, p}(\mathbb{R}^d))}\}. 
\end{align*}
By the trace method of interpolation, we know
\begin{align*}
  \|v_2\|_{C([0, T]; X)}\leq C\{\|\partial_tv\|_{L^2(0, T; \dot{W}^{-1, p}(\mathbb{R}^d))}+\|v\|_{L^2(0, T; \dot{W}^{1, p}(\mathbb{R}^d))}\},
\end{align*}
where $X=(\dot{W}^{1, p}(\mathbb{R}^d), \dot{W}^{-1, p}(\mathbb{R}^d))_{\frac{1}{2}, 2}=\dot{B}^0_{p, 2}(\mathbb{R}^d)$ and $\dot{B}^0_{p, 2}(\mathbb{R}^d)$ is the homogeneous Besov space. Denote $w_j=\dot{\Delta}_jv_2$, where $\dot{\Delta}_j$ is the standard homogeneous dyadic blocks of Littlewood-Paley decomposition. Then $v_2=\sum_{j\in\mathbb{Z}}w_j$, $\|v_2(0)\|_{\dot{B}^0_{p, 2}}=\|(\|w_j(0)\|_{L^p})_{j\in\mathbb{Z}}\|_{l^2}$ and
\begin{align*}
  \partial_tw_j-\Delta w_j=0,\quad w_j(0)\in L^p(\mathbb{R}^d), 
\end{align*}
which, by the calculation of heat kernel, gives that
\begin{align*}
  \|w_j\|_{L^q(0, T; L^2(\mathbb{R}^d))}\leq C\|w_j(0)\|_{L^p(\mathbb{R}^d)}.
\end{align*}
Noticing that $q\geq 2$, this implies that
\begin{align*}
  \|v_2\|_{L^q(0, T;L^2)}&=\|\hat{v_2}\|_{L^q(0, T;L^2)}=\|\sum\nolimits_j\hat{w_j}\|_{L^q(0, T;L^2)}\leq \|\Big(\sum\nolimits_j|\hat{w_j}|^2\Big)^{1/2}\|_{L^q(0, T;L^2)}\\&\leq \Big(\sum\nolimits_j\|\hat{w_j}\|_{L^q(0, T;L^2)}^2\Big)^{1/2}\leq C\|\|w_j(0)\|_{L^p}\|_{l^2}=C\|v_2(0)\|_{\dot{B}^0_{p, 2}}\\&\leq C\{\|\partial_tv\|_{L^2(0, T; \dot{W}^{-1, p}(\mathbb{R}^d))}+\|v\|_{L^2(0, T; \dot{W}^{1, p}(\mathbb{R}^d))}\},
\end{align*}
where we have used the fact that $\mathrm{supp}\hat{w_j}\cap \mathrm{supp}\hat{w_{j'}}=\emptyset$ if $|j-j'|\geq 2$. Combining the estimates of $v_1$ and $v_2$, we obtain the desired result for $v$.
\end{proof}

By H\"{o}lder's inequality, Lemma \ref{parabolic_conver_boundary_lem_embedding} implies immediately the following estimate for temporal boundary layers.
\begin{corollary}\label{parabolic_conver_boundary_coro_time}
  Under the assumptions of Lemma \ref{parabolic_conver_boundary_lem_embedding}, we have for any $s\in [0, T-\delta^2]$
  \begin{align*}
    \|\nabla u\|_{L^2(\Omega\times (s, s+\delta^2))}\leq C\delta^{1+\frac{d}{2}-\frac{d}{p}}\{\|\nabla u\|_{L^2(0, T; \dot{W}^{1, p}(\Omega))}+\|\partial_t u\|_{L^2(0, T; L^p(\Omega))}\},
  \end{align*}
where $C$ depends only on $d, p$ and the Lipschitz character of $\Omega$.
\end{corollary}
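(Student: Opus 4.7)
The plan is to obtain the corollary as a direct consequence of Lemma \ref{parabolic_conver_boundary_lem_embedding} by applying Hölder's inequality in the time variable, since the Lebesgue measure of the interval $(s, s+\delta^2)$ is $\delta^2$ and the exponent $q$ provided by Lemma \ref{parabolic_conver_boundary_lem_embedding} satisfies $q\ge 2$ (as $\frac{2}{q}=\frac{d}{p}-\frac{d}{2}\le 1$ when $p\ge \frac{2d}{d+2}$).

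Concretely, I would first invoke Lemma \ref{parabolic_conver_boundary_lem_embedding} to conclude that $\nabla u\in L^q(0,T;L^2(\Omega))$ with
\[
  \|\nabla u\|_{L^q(0,T;L^2(\Omega))}\leq C\{\|\nabla u\|_{L^2(0,T;\dot{W}^{1,p}(\Omega))}+\|\partial_t u\|_{L^2(0,T;L^p(\Omega))}\},
\]
where $\frac{2}{q}=\frac{d}{p}-\frac{d}{2}$. Then for any $s\in[0,T-\delta^2]$, I would split the $L^2(\Omega\times(s,s+\delta^2))$ norm as a mixed-norm $L^2_tL^2_x$ integral and apply Hölder's inequality with exponents $q/2$ and $q/(q-2)$ in the $t$ variable:
\[
  \int_s^{s+\delta^2}\|\nabla u(\cdot,t)\|_{L^2(\Omega)}^2\,dt\leq \Big(\int_s^{s+\delta^2}\|\nabla u(\cdot,t)\|_{L^2(\Omega)}^q\,dt\Big)^{2/q}\cdot \delta^{2(1-2/q)}.
\]
Taking the square root and bounding the $L^q_t L^2_x$ norm on $(s,s+\delta^2)$ by the same norm on $(0,T)$, I obtain the factor $\delta^{1-2/q}$. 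A direct computation gives $1-\frac{2}{q}=1+\frac{d}{2}-\frac{d}{p}$, which is exactly the exponent required in the statement.

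There is no real obstacle: the only point to verify is that $q\ge 2$ (so that Hölder goes in the expected direction), which is guaranteed by the hypothesis $p\ge \frac{2d}{d+2}$, and that the resulting bound is independent of $s$, which is immediate since we dominate the integral over $(s,s+\delta^2)$ by the one over $(0,T)$. Combining this inequality with the bound from Lemma \ref{parabolic_conver_boundary_lem_embedding} yields the claimed estimate with a constant $C$ depending only on $d$, $p$, and the Lipschitz character of $\Omega$.
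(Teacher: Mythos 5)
Your proposal is correct and follows exactly the paper's route: the corollary is deduced from Lemma \ref{parabolic_conver_boundary_lem_embedding} by H\"{o}lder's inequality in the time variable over the interval $(s, s+\delta^2)$, with the exponent identity $1-\frac{2}{q}=1+\frac{d}{2}-\frac{d}{p}$ giving the stated power of $\delta$. Nothing further is needed.
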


\section{Convergence rates}
\label{parabolic_sec_conv-rates}

This section is devoted to establishing the sharp convergence rate for problem \eqref{parabolic_intro_eq1}. We always assume that $\partial\Omega\in C^{1, 1}$ henceforward.

Suppose $\eta_\varepsilon:=\eta_{1, \varepsilon}\cdot\eta_{2, \varepsilon}$, where $\eta_{1, \varepsilon}$ and $\eta_{2, \varepsilon}$ are two smooth cut-off functions on $(0, T)$ and $\Omega$, respectively, such that, $0\leq \eta_{1, \varepsilon}, \eta_{2, \varepsilon}\leq 1$ and
\begin{align}\label{parabolic_def_cutoff}
  \begin{split}
  &\mathrm{supp}(\eta_{1, \varepsilon})\subset(4\varepsilon^2, T-4\varepsilon^2),\quad \eta_{1, \varepsilon}=1 \textrm{  in } (5\varepsilon^2, T-5\varepsilon^2),\quad |(\eta_{1, \varepsilon})'|\leq C/\varepsilon^2,\\
  &\eta_{2, \varepsilon}=0 \textrm{  on } \Omega_{4\varepsilon},\quad \eta_{2, \varepsilon}=1 \textrm{  on } \Omega\setminus\Omega_{5\varepsilon}, \quad |\nabla \eta_{2, \varepsilon}|\leq C/\varepsilon.
\end{split}
\end{align}

Let $u_\varepsilon$ and $u_0$ be the solutions to problems \eqref{parabolic_intro_eq1} and \eqref{parabolic_intro_eq_u0} respectively. For the sake of simplicity, we extend $u_0$ onto $\mathbb{R}^d\times(0, T)$, such that, $$\|\partial_t u_0\|_{L^2(0, T; L^{q_0}(\mathbb{R}^d))}+\|u_0\|_{L^2(0, T; W^{2, q_0}(\mathbb{R}^d))}\leq C\|\partial_t u_0\|_{L^2(0, T; L^{q_0}(\Omega))}+\|u_0\|_{L^2(0, T; W^{2, q_0}(\Omega))}.$$By equations \eqref{parabolic_intro_eq1} and \eqref{parabolic_intro_eq_u0}, we calculate that
\begin{align*} &\quad(\partial_t+\mathcal{L}_\varepsilon)(u_\varepsilon-u_0)=-\textrm{div}\{(\widehat{A}-A^\varepsilon)\nabla u_0\}\\&=-\textrm{div}\{\widehat{A}\nabla u_0-A^\varepsilon \nabla u_0-A^\varepsilon[S_\varepsilon(\nabla_y\widetilde{\chi} K_\varepsilon(\nabla u_0))]^\varepsilon\}-\mathrm{div}\{A^\varepsilon[S_\varepsilon(\nabla_y\widetilde{\chi} K_\varepsilon(\nabla u_0))]^\varepsilon\},
\end{align*}
where $K_\varepsilon(\cdot):=S_\varepsilon(\cdot)\eta_\varepsilon$ with $S_\varepsilon$ defined in Section \ref{parabolic_subsec_smoothing} and notation \eqref{parabolic_conver_def_tilde} was used in $\widetilde{\chi}$. Note that we have regarded $\chi$ as functions on $\mathbb{R}^{d+1}\times \mathbb{R}^{d+1}$ having value $0$ outside of $\Omega_T$, due to the cut-off effect of $K_\varepsilon$ (see \eqref{parabolic_def_cutoff}). Recalling that $\nabla_y$ is commutative with the smoothing operator $S_\varepsilon$, we deduce from \eqref{parabolic_intro_iden_composition} that
\begin{align*}
  -\mathrm{div}\{A^\varepsilon[S_\varepsilon(\nabla_y\widetilde{\chi} K_\varepsilon(\nabla u_0))]^\varepsilon\}&=-\mathrm{div}\{A^\varepsilon[\nabla_yS_\varepsilon(\widetilde{\chi} K_\varepsilon(\nabla u_0))]^\varepsilon\}\\&= \mathcal{L}_\varepsilon(\varepsilon [S_\varepsilon(\widetilde{\chi} K_\varepsilon(\nabla u_0))]^\varepsilon)+\mathrm{div}\{A^\varepsilon\varepsilon[\nabla_x S_\varepsilon(\widetilde{\chi} K_\varepsilon(\nabla u_0))]^\varepsilon\}.
\end{align*}
Thus, by setting $\mathbf{B}_\varepsilon=(\mathbf{B}_{\varepsilon, i}^\alpha)$, $1\leq i\leq d, 1\leq \alpha\leq m$, where \begin{align*}
 \mathbf{B}^\alpha_{\varepsilon, i}(x, t; y, \tau):=A^{\alpha\beta}_{ij}\partial_{j} u^\beta_0+A^{\alpha\gamma}_{ik}S_\varepsilon(\partial_{y_k}\widetilde{\chi}^{\gamma\beta}_j K_\varepsilon(\partial_{j} u^\beta_0))-\widehat{A}^{\alpha\beta}_{ij}\partial_{j} u^\beta_0,\end{align*}
it follows that
\begin{align}\label{parabolic_conver_iden_Lw_1} \begin{split}
    &\quad(\partial_t+\mathcal{L}_\varepsilon)(u_\varepsilon-u_0-\varepsilon [S_\varepsilon(\widetilde{\chi} K_\varepsilon(\nabla u_0))]^\varepsilon)\\&=\mathrm{div}\{(\mathbf{B}_\varepsilon)^\varepsilon\}+\mathrm{div}\{A^\varepsilon\varepsilon[\nabla_x S_\varepsilon(\widetilde{\chi} K_\varepsilon(\nabla u_0))]^\varepsilon\}-\varepsilon\partial_t([S_\varepsilon(\widetilde{\chi} K_\varepsilon(\nabla u_0))]^\varepsilon)\\&=\partial_i\{(\mathbf{B}_{\varepsilon, i})^\varepsilon-[S_\varepsilon(\widetilde{B}_{ij}K_\varepsilon(\partial_j u_0))]^\varepsilon\}+\partial_i\{[S_\varepsilon(\widetilde{B}_{ij}K_\varepsilon(\partial_j u_0))]^\varepsilon\}\\&\quad+\mathrm{div}\{A^\varepsilon\varepsilon[\nabla_x S_\varepsilon(\widetilde{\chi} K_\varepsilon(\nabla u_0))]^\varepsilon\}-\varepsilon\partial_t([S_\varepsilon(\widetilde{\chi} K_\varepsilon(\nabla u_0))]^\varepsilon),
    \end{split}
\end{align}
where $(B_{ij}^{\alpha\beta})$ is defined by \eqref{parabolic_def_B}. We emphasize that
\begin{align}\label{parabolic_c_iden_Bdifference}
  \begin{split}
    &\mathbf{B}_{\varepsilon, i}-S_\varepsilon(\widetilde{B}_{ij} K_\varepsilon(\partial_j u_0))=A_{ij}\partial_{j} u_0-S_\varepsilon(\widetilde{A}_{ij}K_\varepsilon(\partial_j u_0))+A_{ik}S_\varepsilon(\partial_{y_k}\widetilde{\chi}_j K_\varepsilon(\partial_{j} u_0))\\&\qquad-S_\varepsilon[S_\varepsilon(A_{ik}\partial_{y_k}\chi_j) K_\varepsilon(\partial_j u_0)]-\widehat{A}_{ij}\partial_{j} u_0+S_\varepsilon(S_\varepsilon(\widehat{A}_{ij})K_\varepsilon(\partial_j u_0)),
  \end{split}
\end{align}
where the main difference between $\mathbf{B}_\varepsilon$ and $S_\varepsilon(\widetilde{B}K_\varepsilon(\nabla u_0))$ focuses on the smoothing acts of $S_\varepsilon$.

Noticing that $\chi_j=-B_{(d+1)j}$, by Lemma \ref{parabolic_pre_lem_fkB}, we write
\begin{align*}
  &\quad\partial_i\{[S_\varepsilon(\widetilde{B}_{ij}K_\varepsilon(\partial_j u_0))]^\varepsilon\}-\varepsilon\partial_t([S_\varepsilon(\widetilde{\chi} K_\varepsilon(\nabla u_0))]^\varepsilon)\\&=\partial_i\{[S_\varepsilon([\partial_{y_k}\widetilde{\mathfrak{B}}_{kij}+\partial_\tau\widetilde{\mathfrak{B}}_{(d+1)ij}]K_\varepsilon(\partial_{j} u_0))]^\varepsilon\}+\partial_t\{\varepsilon[S_\varepsilon(\partial_{y_k}\widetilde{\mathfrak{B}}_{k(d+1)j}K_\varepsilon(\partial_j u_0))]^\varepsilon\}\\&=\partial_i\{[\partial_{y_k}S_\varepsilon(\widetilde{\mathfrak{B}}_{kij}K_\varepsilon(\partial_{j} u_0))]^\varepsilon\}+\partial_i\{[\partial_\tau S_\varepsilon(\widetilde{\mathfrak{B}}_{(d+1)ij}K_\varepsilon(\partial_j u_0))]^\varepsilon\}\\&\quad+\partial_t\{\varepsilon[\partial_{y_k}S_\varepsilon(\widetilde{\mathfrak{B}}_{k(d+1)j}K_\varepsilon(\partial_j u_0))]^\varepsilon\},
\end{align*}
where we have also used the fact that $\mathfrak{B}_{(d+1)(d+1)j}=0$ in the first equality, as well as the commutativity between $S_\varepsilon$ and the partial derivatives w.r.t. $(y, \tau)$ in the second one. In view of \eqref{parabolic_intro_iden_composition}, together with the skew-symmetry of $\mathfrak{B}$ in Lemma \ref{parabolic_pre_lem_fkB}, this yields that
\begin{align}\label{parabolic_conver_iden_Lw_2}
  \begin{split}
  &\quad\partial_i\{[S_\varepsilon(\widetilde{B}_{ij}K_\varepsilon(\partial_j u_0))]^\varepsilon\}-\varepsilon\partial_t([S_\varepsilon(\widetilde{\chi} K_\varepsilon(\nabla u_0))]^\varepsilon)\\&=\partial_i\{\varepsilon\partial_{k}([S_\varepsilon(\widetilde{\mathfrak{B}}_{kij}K_\varepsilon(\partial_{j} u_0))]^\varepsilon)-\varepsilon[\partial_{x_k}S_\varepsilon(\widetilde{\mathfrak{B}}_{kij}K_\varepsilon(\partial_{j} u_0))]^\varepsilon\}\\&\quad+\partial_i\{\varepsilon^2\partial_t([S_\varepsilon(\widetilde{\mathfrak{B}}_{(d+1)ij}K_\varepsilon(\partial_j u_0))]^\varepsilon)-\varepsilon^2[\partial_t S_\varepsilon(\widetilde{\mathfrak{B}}_{(d+1)ij}K_\varepsilon(\partial_j u_0))]^\varepsilon\}\\&\quad+\partial_t\{\varepsilon^2\partial_{k}([S_\varepsilon(\widetilde{\mathfrak{B}}_{k(d+1)j}K_\varepsilon(\partial_j u_0))]^\varepsilon)-\varepsilon^2[\partial_{x_k}S_\varepsilon(\widetilde{\mathfrak{B}}_{k(d+1)j}K_\varepsilon(\partial_j u_0))]^\varepsilon\}\\&=-\varepsilon\partial_i\{[\partial_{x_k}S_\varepsilon(\widetilde{\mathfrak{B}}_{kij}K_\varepsilon(\partial_{j} u_0))]^\varepsilon\}-\varepsilon^2\partial_i\{[\partial_t S_\varepsilon(\widetilde{\mathfrak{B}}_{(d+1)ij}K_\varepsilon(\partial_j u_0))]^\varepsilon\}\\&\quad-\varepsilon^2\partial_t\{[\partial_{x_k}S_\varepsilon(\widetilde{\mathfrak{B}}_{k(d+1)j}K_\varepsilon(\partial_j u_0))]^\varepsilon\}.
  \end{split}
\end{align}
Therefore, by defining
\begin{align}
  \label{parabolic_conver_def_w}
    w_\varepsilon:=u_\varepsilon-u_0-\varepsilon [S_\varepsilon(\widetilde{\chi} K_\varepsilon(\nabla u_0))]^\varepsilon-\varepsilon^2[\partial_{x_k}S_\varepsilon(\widetilde{\mathfrak{B}}_{(d+1)kj}K_\varepsilon(\partial_j u_0))]^\varepsilon,
\end{align}
and combining \eqref{parabolic_conver_iden_Lw_1} and \eqref{parabolic_conver_iden_Lw_2}, we finally get
\begin{align}\label{parabolic_conver_iden_Lw_3}
  \begin{split}
    &(\partial_t+\mathcal{L}_\varepsilon)w_\varepsilon=\partial_i\{(\mathbf{B}_{\varepsilon, i})^\varepsilon-[S_\varepsilon(\widetilde{B}_{ij}K_\varepsilon(\partial_j u_0))]^\varepsilon\}+\mathrm{div}\{A^\varepsilon\varepsilon[\nabla_x S_\varepsilon(\widetilde{\chi} K_\varepsilon(\nabla u_0))]^\varepsilon\}\\&\qquad+\mathrm{div}\{A^\varepsilon\nabla(\varepsilon^2[\partial_{x_k}S_\varepsilon(\widetilde{\mathfrak{B}}_{(d+1)kj}K_\varepsilon(\partial_j u_0))]^\varepsilon)\}-\varepsilon\partial_i\{[\partial_{x_k}S_\varepsilon(\widetilde{\mathfrak{B}}_{kij}K_\varepsilon(\partial_{j} u_0))]^\varepsilon\}\\&\qquad\qquad-\varepsilon^2\partial_i\{[\partial_t S_\varepsilon(\widetilde{\mathfrak{B}}_{(d+1)ij}K_\varepsilon(\partial_j u_0))]^\varepsilon\}.
  \end{split}
\end{align}

\begin{lemma}\label{parabolic_conver_lem_Lw}
  Let $\Omega$ be a bounded Lipschitz domain in $\mathbb{R}^d$ and $0<\varepsilon\leq \sqrt{T}$. Assume that $A$ satisfies \eqref{parabolic_intro_cond_elliptic}--\eqref{parabolic_intro_cond_AC}. Let $w_\varepsilon$ be defined by \eqref{parabolic_conver_def_w} with $K_\varepsilon(\cdot)=S_\varepsilon(\cdot)\eta_\varepsilon$. Then for any $\psi\in L^2(0, T; C_0^1(\Omega))$ and $1\leq p\leq 2\leq q\leq \infty$,
  \begin{align}
    &\Big|\int_0^T\langle\partial_tw_\varepsilon, \psi\rangle_{H^{-1}(\Omega)\times H^1_0(\Omega)}+\iint_{\Omega_T}A^\varepsilon\nabla w_\varepsilon\cdot\nabla\psi \Big|\leq C\|\nabla u_0-S_\varepsilon(\nabla u_0)\|_{L^{2, p}(\Omega_T\setminus\Omega_T^{4, \varepsilon})}\|\nabla\psi\|_{L^{2, p'}(\Omega_T)}\nonumber\\&+C\|\nabla u_0\|_{L^2(\Omega_T^{6, \varepsilon})}\|\nabla\psi\|_{L^2(\Omega_T^{6, \varepsilon})}+C\varepsilon\big\{\|S_\varepsilon(\nabla u_0)\|_{L^{2, p^*}(\Omega_T\setminus\Omega_T^{4, \varepsilon})}+\|S_\varepsilon(\nabla^2 u_0)\|_{L^{2, p}(\Omega_T\setminus\Omega_T^{4, \varepsilon})}\nonumber\\&+\|(\nabla\varphi_2)_\varepsilon*\partial_t u_0\|_{L^{2, p}(\Omega_T\setminus\Omega_T^{4, \varepsilon})}\big\}\|\nabla \psi\|_{L^{2, p'}(\Omega_T)}+C\varepsilon\|S_\varepsilon(\nabla u_0)\|_{L^{\frac{2q}{q-2}, 2}(\Omega_T\setminus\Omega_T^{4, \varepsilon})}\|\nabla\psi\|_{L^{q, 2}(\Omega_T)},\label{parabolic_conver_es_w_psi}
\end{align}
where $(\nabla\varphi_2)_\varepsilon(x)=\varepsilon^{-d}\nabla\varphi_2(\frac{x}{\varepsilon})$ and $C$ depends only on $d$, $\mu$, $q$, $[A]_{\mathscr{H}(\Omega_T; \bm{L^\infty})}$.
\end{lemma}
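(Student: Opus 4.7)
The plan is to pair both sides of \eqref{parabolic_conver_iden_Lw_3} with $\psi$ and integrate by parts in space, which rewrites the left-hand side of \eqref{parabolic_conver_es_w_psi} as a sum $I_1+\cdots+I_5$, one $I_k$ for each divergence term on the right of \eqref{parabolic_conver_iden_Lw_3}. Here $I_1$ pairs $(\mathbf{B}_{\varepsilon,i})^\varepsilon-[S_\varepsilon(\widetilde{B}_{ij}K_\varepsilon(\partial_j u_0))]^\varepsilon$ with $\partial_i\psi$, while $I_2,\dots,I_5$ carry explicit $\varepsilon$ or $\varepsilon^2$ prefactors and involve $\widetilde{\chi}$ or $\widetilde{\mathfrak{B}}$. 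Since $K_\varepsilon(\cdot)=S_\varepsilon(\cdot)\eta_\varepsilon$ and $\eta_\varepsilon$ is supported in $\Omega_T\setminus\Omega_T^{4,\varepsilon}$ by \eqref{parabolic_def_cutoff}, every integrand containing $K_\varepsilon(\nabla u_0)$ vanishes on $\Omega_T^{4,\varepsilon}$, so all spatial norms on the right of \eqref{parabolic_conver_es_w_psi} live on $\Omega_T\setminus\Omega_T^{4,\varepsilon}$.

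For $I_1$ I invoke the identity \eqref{parabolic_c_iden_Bdifference}, which splits the integrand into three blocks corresponding to $A$, $A_{ik}\partial_{y_k}\chi_j$, and $\widehat{A}$. Inserting and subtracting $\eta_\varepsilon S_\varepsilon(\partial_j u_0)$ in each block isolates three types of pieces. On the complement of $\{\eta_\varepsilon=1\}$ only $(A^\varepsilon-\widehat{A})\nabla u_0$ survives, which by H\"{o}lder's inequality supplies the boundary-layer term $\|\nabla u_0\|_{L^2(\Omega_T^{6,\varepsilon})}\|\nabla\psi\|_{L^2(\Omega_T^{6,\varepsilon})}$. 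The replacement of $\partial_j u_0$ by $S_\varepsilon(\partial_j u_0)$ yields $\|\nabla u_0-S_\varepsilon(\nabla u_0)\|_{L^{2,p}}\|\nabla\psi\|_{L^{2,p'}}$, exploiting the uniform $L^\infty$-bounds on $A$, $\widehat A$ and $A_{ik}\partial_{y_k}\chi_j$ (Lemma \ref{parabolic_pre_lem_chi} and Corollary \ref{parabolic_pre_coro_hatA}). The genuinely commutator-type leftovers, of the forms $g\cdot S_\varepsilon(h)-S_\varepsilon(g\cdot h)$ and $S_\varepsilon\bigl([g\cdot S_\varepsilon(h)-S_\varepsilon(g\cdot h)]\cdot f\bigr)$ with $g=A$, $\widehat A$, or $A_{ik}\partial_{y_k}\chi_j$, are controlled by estimates \eqref{parabolic_conver_smoothing_coro_es_1}--\eqref{parabolic_conver_smoothing_coro_es_3} of Corollary \ref{parabolic_coro_smoothing_main}, noting that the $\mathscr{H}$-seminorms of all three choices of $g$ are majorized by $[A]_{\mathscr{H}(\Omega_T;\bm{L^\infty})}$. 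This produces the $\varepsilon\|S_\varepsilon(\nabla u_0)\|_{L^{2,p^*}}\|\nabla\psi\|_{L^{2,p'}}$ and $\varepsilon\|S_\varepsilon(\nabla u_0)\|_{L^{\frac{2q}{q-2},2}}\|\nabla\psi\|_{L^{q,2}}$ contributions on the right of \eqref{parabolic_conver_es_w_psi}.

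For $I_2,\dots,I_5$ the explicit $\varepsilon^{\alpha}$ prefactors must absorb every $\varepsilon^{-1}$ produced by a derivative hitting a smoothing kernel. I will systematically apply $\partial_{x_i}[h]^\varepsilon=[\partial_{x_i}h]^\varepsilon+\varepsilon^{-1}[\partial_{y_i}h]^\varepsilon$ (and its temporal analogue) to peel off the outer derivatives, together with the commutation $\partial_{x_i}S_\varepsilon=S_\varepsilon\partial_{x_i}$ to move any surviving inner $x$-derivative from the kernel onto $\widetilde{\chi}$, $\widetilde{\mathfrak{B}}$, or $K_\varepsilon(\nabla u_0)$. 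The needed uniform control of $\widetilde{\chi}\in L^\infty(\Omega_T;\mathbb{B}_1)$ and of $\widetilde{\mathfrak{B}}$ in $L^\infty(\Omega_T;\mathbb{B}_1)$ or $L^\infty(\Omega_T;\mathbb{B}_2)$ comes from Lemmas \ref{parabolic_pre_lem_chi} and \ref{parabolic_pre_lem_fkB}; in particular the $\mathbb{B}_2$-regularity furnishes $\nabla_y\widetilde{\mathfrak{B}}_{(d+1)kj}\in L^\infty$, which is what closes the $\varepsilon^{-1}[\partial_{y_i}\cdot]^\varepsilon$ branch of $I_3$. Lemma \ref{parabolic_pre_smooth_lem_2} and Remark \ref{parabolic_pre_remark_1} then bound the resulting products, producing the $\varepsilon\|S_\varepsilon(\nabla u_0)\|$, $\varepsilon\|S_\varepsilon(\nabla^2 u_0)\|$, and $\varepsilon\|(\nabla\varphi_2)_\varepsilon*\partial_t u_0\|$ terms; the last one appears precisely when the inner $\nabla_x$ of $I_3$ is transferred onto $K_\varepsilon(\nabla u_0)$ and meets $S^t_\varepsilon\partial_t u_0$. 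For $I_5$, Lemma \ref{parabolic_pre_smooth_lem_partt} applied to $\widetilde{\mathfrak{B}}_{(d+1)ij}\in W^{1/2,2}(0,T;\cdot)$ gives $\|\partial_t S^t_\varepsilon\widetilde{\mathfrak{B}}\|=O(\varepsilon^{-1}[A]_{\mathscr{H}})$, absorbed by the $\varepsilon^2$ prefactor.

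The main obstacle will be the bookkeeping in $I_1$: matching each split piece of \eqref{parabolic_c_iden_Bdifference} to the correct commutator estimate in Corollary \ref{parabolic_coro_smoothing_main}, and coherently tracking the two distinct test-function norms $L^{2,p'}$ and $L^{q,2}$ that appear on the right of \eqref{parabolic_conver_es_w_psi}. A secondary delicate point is the $\varepsilon^{-1}[\partial_{y_i}\cdot]^\varepsilon$ branch of $I_3$, whose naive scaling is $O(1)$: only after the inner $\partial_{x_k}$ is commuted through $S_\varepsilon$ and lands on $K_\varepsilon(\nabla u_0)$---producing $S_\varepsilon(\nabla^2 u_0)$ in the interior and a boundary-layer remainder through $\nabla_x\eta_\varepsilon$---does the bound recover the claimed $O(\varepsilon)$ scale, which is exactly why the parabolic construction in Lemma \ref{parabolic_pre_lem_fkB} was tuned so that $\mathfrak{B}_{(d+1)kj}\in\mathbb{B}_2$.
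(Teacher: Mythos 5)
Your outline reproduces the paper's own argument: test \eqref{parabolic_conver_iden_Lw_3} with $\psi$, split into $I_1$--$I_5$, treat $I_1$ through the decomposition \eqref{parabolic_c_iden_Bdifference} together with the commutator estimates \eqref{parabolic_conver_smoothing_coro_es_1}--\eqref{parabolic_conver_smoothing_coro_es_3} of Corollary \ref{parabolic_coro_smoothing_main}, and treat the remaining terms with \eqref{parabolic_intro_iden_composition}, the regularity of $\widetilde{\chi},\widetilde{\mathfrak{B}}$ from Lemmas \ref{parabolic_pre_lem_chi} and \ref{parabolic_pre_lem_fkB}, Lemma \ref{parabolic_pre_smooth_lem_2} with Remark \ref{parabolic_pre_remark_1}, and Lemma \ref{parabolic_pre_smooth_lem_partt}. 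This is the same decomposition, the same lemmas, and the same mechanism for absorbing every $\varepsilon^{-1}$, so in structure the proposal is correct.

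One bookkeeping slip should be fixed before you write it out. The term $\varepsilon\|(\nabla\varphi_2)_\varepsilon*\partial_t u_0\|_{L^{2,p}}$ cannot arise from the purely spatial terms (your ``$I_3$''): those contain no time derivative, and transferring their inner $\partial_{x_k}$ onto $K_\varepsilon(\nabla u_0)=S_\varepsilon(\nabla u_0)\eta_\varepsilon$ only produces $S_\varepsilon(\nabla^2 u_0)\eta_\varepsilon$ plus a spatial boundary-layer piece through $\nabla\eta_\varepsilon$. It comes instead from the last term, $\varepsilon^2\partial_i\{[\partial_t S_\varepsilon(\widetilde{\mathfrak{B}}_{(d+1)ij}K_\varepsilon(\partial_j u_0))]^\varepsilon\}$, via the product rule: besides the piece $\partial_t\widetilde{\mathfrak{B}}_{d+1}\,K_\varepsilon(\nabla u_0)$ that you control with Lemma \ref{parabolic_pre_smooth_lem_partt}, the derivative also hits $K_\varepsilon(\nabla u_0)$, giving $\widetilde{\mathfrak{B}}_{d+1}\nabla S_\varepsilon(\partial_t u_0)\eta_\varepsilon$ (and $\nabla S^x_\varepsilon=\varepsilon^{-1}(\nabla\varphi_2)_\varepsilon*$ yields exactly the $(\nabla\varphi_2)_\varepsilon*\partial_t u_0$ contribution after absorbing one power of $\varepsilon$) together with $\widetilde{\mathfrak{B}}_{d+1}S_\varepsilon(\nabla u_0)\partial_t\eta_\varepsilon$, a temporal boundary-layer piece in which $|\partial_t\eta_{1,\varepsilon}|\le C\varepsilon^{-2}$ is cancelled by the $\varepsilon^2$ prefactor and which lands in the $\|\nabla u_0\|_{L^2(\Omega_T^{6,\varepsilon})}\|\nabla\psi\|_{L^2(\Omega_T^{6,\varepsilon})}$ term. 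As written, your treatment of the time-derivative term only covers the $\partial_t\widetilde{\mathfrak{B}}$ branch, so these two additional pieces must be added; with that correction the argument closes exactly as in the paper.
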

\begin{proof}
  According to \eqref{parabolic_conver_iden_Lw_3}, we have
   \begin{align}
  &\quad\Big|\int_0^T\langle\partial_tw_\varepsilon, \psi\rangle_{H^{-1}(\Omega)\times H^1_0(\Omega)}+\iint_{\Omega_T}A^\varepsilon\nabla w_\varepsilon\cdot\nabla\psi \Big|\nonumber\\&\leq \Big|\iint_{\Omega_T}[\mathbf{B}_{\varepsilon, i}-S_\varepsilon(\widetilde{B}_{ij} K_\varepsilon(\partial_j u_0))]^\varepsilon\partial_i\psi\Big|+C\varepsilon\iint_{\Omega_T}|[\nabla_x S_\varepsilon(\widetilde{\chi} K_\varepsilon(\nabla u_0))]^\varepsilon||\nabla\psi|\nonumber\\&\quad+\varepsilon\iint_{\Omega_T}|[\partial_{x_k}S_\varepsilon(\widetilde{\mathfrak{B}}_{kij}K_\varepsilon(\partial_{j} u_0))]^\varepsilon\partial_i \psi|+C\varepsilon^2\iint_{\Omega_T}|\nabla([\partial_{x_k}S_\varepsilon(\widetilde{\mathfrak{B}}_{(d+1)kj}K_\varepsilon(\partial_j u_0))]^\varepsilon)||\nabla\psi|\nonumber\\&\quad+\varepsilon^2\iint_{\Omega_T}|[\partial_t S_\varepsilon(\widetilde{\mathfrak{B}}_{(d+1)ij}K_\varepsilon(\partial_j u_0))]^\varepsilon\partial_i\psi|\doteq I_1+I_2+I_3+I_4+I_5,\label{parabolic_conver_es_Lw}
\end{align}
where $C$ depends only on $\mu$.

Firstly, by \eqref{parabolic_c_iden_Bdifference}, we have
\begin{align*}
    &\quad\mathbf{B}_{\varepsilon}-S_\varepsilon(\widetilde{B} K_\varepsilon(\nabla u_0))=(A-\widehat{A})\nabla u_0-S_\varepsilon(S_\varepsilon(A-\widehat{A})K_\varepsilon(\nabla u_0))+AS_\varepsilon(\nabla_{y}\widetilde{\chi}\cdot K_\varepsilon(\nabla u_0))\nonumber\\&\qquad\qquad\qquad\qquad\qquad\qquad-S_\varepsilon[S_\varepsilon(A\nabla_{y}\chi)\cdot K_\varepsilon(\nabla u_0)]\nonumber\\&=(A-\widehat{A})[\nabla u_0-S_\varepsilon(K_\varepsilon(\nabla u_0))]+(A-\widehat{A}) S_\varepsilon(K_\varepsilon(\nabla u_0))-S_\varepsilon[S_\varepsilon(A-\widehat{A}) K_\varepsilon(\nabla u_0)]\nonumber\\&\qquad+AS_\varepsilon(\nabla_{y}\widetilde{\chi}\cdot K_\varepsilon(\nabla u_0))-S_\varepsilon[S_\varepsilon(A\nabla_{y}\chi)\cdot K_\varepsilon(\nabla u_0)],
\end{align*}
which, by the definition of $K_\varepsilon$, yields that
\begin{align*}
  I_1&\leq C\iint_{\Omega_T}|\nabla u_0-S_\varepsilon(S_\varepsilon(\nabla u_0)\eta_\varepsilon)||\nabla\psi|\\&\quad+\iint_{\Omega_T}|(A-\widehat{A}) S_\varepsilon(K_\varepsilon(\nabla u_0))-S_\varepsilon(S_\varepsilon(A-\widehat{A}) K_\varepsilon(\nabla u_0))||\nabla\psi|\\&\quad+\iint_{\Omega_T}|[AS_\varepsilon(\nabla_y\widetilde{\chi}\cdot K_\varepsilon(\nabla u_0))-S_\varepsilon(S_\varepsilon(A\nabla_y \chi)\cdot K_\varepsilon(\nabla u_0))]^\varepsilon||\nabla\psi|\\&\doteq I_{11}+I_{12}+I_{13}.
\end{align*}
It is not hard to see that $I_{11}$ can be bounded by
\begin{align*}
  C\|\nabla u_0-S_\varepsilon(\nabla u_0)\|_{L^{2, p}(\Omega_T\setminus\Omega_T^{4, \varepsilon})}\|\nabla\psi\|_{L^{2, p'}(\Omega_T)}+C\|\nabla u_0\|_{L^2(\Omega_T^{6, \varepsilon})}\|\nabla\psi\|_{L^2(\Omega_T^{6, \varepsilon})}.
\end{align*}
Then we turn to $I_{13}$, as $I_{12}$ can be handled in the same manner and has the same estimate. Precisely, $I_{13}$ can be dominated by
\begin{align*}
&\iint_{\Omega_T}|[AS_\varepsilon(\nabla_y\widetilde{\chi}\cdot K_\varepsilon(\nabla u_0))-S_\varepsilon(A\nabla_y \widetilde{\chi}\cdot K_\varepsilon(\nabla u_0))]^\varepsilon||\nabla\psi|\\&\quad+\iint_{\Omega_T}|\{S_\varepsilon([A\nabla_y\widetilde{\chi}-S_\varepsilon(A\nabla_y \chi)]\cdot K_\varepsilon(\nabla u_0))\}^\varepsilon||\nabla\psi|,
\end{align*}
which, by applying estimates \eqref{parabolic_conver_smoothing_coro_es_1} and \eqref{parabolic_conver_smoothing_coro_es_3} to these two terms respectively, yields that
\begin{align*}
  I_{13}&\leq C\varepsilon\{\|\nabla_y\widetilde{\chi}\cdot K_\varepsilon(\nabla u_0)\|_{L^{2, p^*}(\Omega_T; \bm{L^{\bar{q}}})}+\|K_\varepsilon(\nabla u_0)\|_{L^{2, p^*}(\Omega_T)}\} \|\nabla \psi\|_{L^{2, p'}(\Omega_T)}\\&\qquad+C\varepsilon\{\|\nabla_y\widetilde{\chi}\cdot K_\varepsilon(\nabla u_0)\|_{L^{\frac{2q}{q-2}, 2}(\Omega_T; \bm{L^{\bar{q}}})}+\|K_\varepsilon(\nabla u_0)\|_{L^{\frac{2q}{q-2}, 2}(\Omega_T)}\}\|\nabla\psi\|_{L^{q, 2}(\Omega_T)}\\&\leq C\varepsilon\{\|S_\varepsilon(\nabla u_0)\|_{L^{2, p^*}(\Omega_T\setminus\Omega_T^{4, \varepsilon})}\|\nabla\psi\|_{L^{2, p'}(\Omega_T)}+\|S_\varepsilon(\nabla u_0)\|_{L^{\frac{2q}{q-2}, 2}(\Omega_T\setminus\Omega_T^{4, \varepsilon})}\|\nabla\psi\|_{L^{q, 2}(\Omega_T)}\},
\end{align*}
where Lemma \ref{parabolic_pre_lem_chi} was also used and $C$ depends only on $d$, $\mu$, $q$, $[A]_{\mathscr{H}(\Omega_T; \bm{L^\infty})}$. Therefore, $I_1$ can be bounded by
\begin{align*}
  &\quad C\|\nabla u_0-S_\varepsilon(\nabla u_0)\|_{L^{2, p}(\Omega_T\setminus\Omega_T^{4, \varepsilon})}\|\nabla\psi\|_{L^{2, p'}(\Omega_T)}+C\|\nabla u_0\|_{L^2(\Omega_T^{6, \varepsilon})}\|\nabla\psi\|_{L^2(\Omega_T^{6, \varepsilon})}\\&+C\varepsilon\{\|S_\varepsilon(\nabla u_0)\|_{L^{2, p^*}(\Omega_T\setminus\Omega_T^{4, \varepsilon})}\|\nabla\psi\|_{L^{2, p'}(\Omega_T)}+\|S_\varepsilon(\nabla u_0)\|_{L^{\frac{2q}{q-2}, 2}(\Omega_T\setminus\Omega_T^{4, \varepsilon})}\|\nabla\psi\|_{L^{q, 2}(\Omega_T)}\}.
\end{align*}

For $I_2$, by the definitions of $S_\varepsilon, K_\varepsilon$, and Lemmas \ref{parabolic_pre_lem_chi}, \ref{parabolic_pre_smooth_lem_2}, we have
\begin{align*}
  I_2&\leq C\varepsilon\iint_{\Omega_T}|\{S_\varepsilon[\nabla_x\widetilde{\chi} S_\varepsilon(\nabla u_0)\eta_\varepsilon+ \widetilde{\chi} S_\varepsilon(\nabla^2 u_0)\eta_\varepsilon+\widetilde{\chi} S_\varepsilon(\nabla u_0)\nabla \eta_\varepsilon]\}^\varepsilon||\nabla\psi|\\&\leq C\varepsilon\{\|\nabla_x\widetilde{\chi}S_\varepsilon(\nabla u_0)\eta_\varepsilon\|_{L^{2, p}(\Omega_T; \bm{L^{2, p}})}+\|\widetilde{\chi}S_\varepsilon(\nabla^2 u_0)\eta_\varepsilon\|_{L^{2, p}(\Omega_T; \bm{L^{2, p}})}\}\cdot\|\nabla\psi\|_{L^{2, p'}(\Omega_T)}\\&\quad+C\|\nabla u_0\|_{L^2(\Omega_{6\varepsilon}\times(0, T))}\|\nabla\psi\|_{L^2(\Omega_{6\varepsilon}\times (0, T))}\\&\leq C\varepsilon\{\|S_\varepsilon(\nabla u_0)\|_{L^{2, p^*}(\Omega_T\setminus\Omega_T^{4, \varepsilon})}+\|S_\varepsilon(\nabla^2 u_0)\|_{L^{2, p}(\Omega_T\setminus\Omega_T^{4, \varepsilon})}\}\cdot\|\nabla\psi\|_{L^{2, p'}(\Omega_T)}\\&\quad+C\|\nabla u_0\|_{L^2(\Omega_{6\varepsilon}\times(0, T))}\|\nabla\psi\|_{L^2(\Omega_{6\varepsilon}\times (0, T))},
\end{align*}
where we have used the fact that $S_\varepsilon[\widetilde{\chi} S_\varepsilon(\nabla u_0)\nabla \eta_\varepsilon]=S_\varepsilon[\widetilde{\chi} S_\varepsilon(\nabla u_0)\nabla \eta_\varepsilon]\mathbf{1}_{\Omega_{6\varepsilon}}$, and $C$ depends only on $d$, $\mu$, $[A]_{\mathscr{H}(\Omega_T; \bm{L^\infty})}$.
Similarly, $I_3$ has the same bound as $I_2$.

To handle $I_4$, we write
\begin{align*}
  I_4&\leq C\varepsilon^2\iint_{\Omega_T}\{|[\nabla_x\partial_{x_k}S_\varepsilon(\widetilde{\mathfrak{B}}_{(d+1)kj}K_\varepsilon(\partial_j u_0))]^\varepsilon|+\varepsilon^{-1}|[\nabla_y\partial_{x_k}S_\varepsilon(\widetilde{\mathfrak{B}}_{(d+1)kj}K_\varepsilon(\partial_j u_0))]^\varepsilon|\}|\nabla\psi|\\&\leq C\varepsilon^2\iint_{\Omega_T}|[\nabla_x^2 S_\varepsilon(\widetilde{\mathfrak{B}}_{d+1}K_\varepsilon(\nabla u_0))]^\varepsilon||\nabla\psi|+C\varepsilon\iint_{\Omega_T}|[\nabla_xS_\varepsilon(\nabla_y\widetilde{\mathfrak{B}}_{d+1}K_\varepsilon(\nabla u_0))]^\varepsilon||\nabla\psi|,
\end{align*}
where we have used \eqref{parabolic_intro_iden_composition} and the notation $\mathfrak{B}_{d+1}=(\mathfrak{B}_{(d+1)kj})$. By using the arguments of $I_2$ together with Lemma \ref{parabolic_pre_lem_fkB} and Remark \ref{parabolic_pre_remark_1}, it follows that
\begin{align*}
    I_4&\leq C\varepsilon\iint_{\Omega_T}|\{(\nabla\varphi_2)_\varepsilon*S^t_\varepsilon(\nabla_x[\widetilde{\mathfrak{B}}_{d+1}K_\varepsilon(\nabla u_0)])\}^\varepsilon||\nabla\psi|+|\{S_\varepsilon(\nabla_x[\nabla_y\widetilde{\mathfrak{B}}_{d+1}K_\varepsilon(\nabla u_0)])\}^\varepsilon||\nabla\psi|\\&\leq C\varepsilon\{\|S_\varepsilon(\nabla u_0)\|_{L^{2, p^*}(\Omega_T\setminus\Omega_T^{4, \varepsilon})}+\|S_\varepsilon(\nabla^2 u_0)\|_{L^{2, p}(\Omega_T\setminus\Omega_T^{4, \varepsilon})}\}\cdot\|\nabla\psi\|_{L^{2, p'}(\Omega_T)}\\&\quad+C\|\nabla u_0\|_{L^2(\Omega_{6\varepsilon}\times(0, T))}\|\nabla\psi\|_{L^2(\Omega_{6\varepsilon}\times (0, T))}.
\end{align*}

For the last term, we have
\begin{align*}
I_5&\leq \varepsilon^2\iint_{\Omega_T}|[S_\varepsilon(\partial_t\widetilde{\mathfrak{B}}_{d+1}K_\varepsilon(\nabla u_0))+S_\varepsilon(\widetilde{\mathfrak{B}}_{d+1}\partial_tK_\varepsilon(\nabla u_0))]^\varepsilon||\nabla\psi|\\&\leq \varepsilon^2\iint_{\Omega_T}|[S_\varepsilon(\partial_t\widetilde{\mathfrak{B}}_{d+1}K_\varepsilon(\nabla u_0))]^\varepsilon||\nabla\psi|+\varepsilon^2\iint_{\Omega_T}|[S_\varepsilon(\widetilde{\mathfrak{B}}_{d+1}\nabla S_\varepsilon(\partial_tu_0)\eta_\varepsilon)]^\varepsilon||\nabla\psi|\\&\quad+\varepsilon^2\iint_{\Omega_T}|[S_\varepsilon(\widetilde{\mathfrak{B}}_{d+1}S_\varepsilon(\nabla u_0)\partial_t\eta_\varepsilon)]^\varepsilon||\nabla\psi|\doteq I_{51}+I_{52}+I_{53},
\end{align*}
where we have used the fact
\begin{align*}
  \partial_tK_\varepsilon(\nabla u_0)=S_\varepsilon(\nabla\partial_tu_0)\eta_\varepsilon+S_\varepsilon(\nabla u_0)\partial_t\eta_\varepsilon=\nabla S_\varepsilon(\partial_tu_0)\eta_\varepsilon+S_\varepsilon(\nabla u_0)\partial_t\eta_\varepsilon.
\end{align*}
To bound $I_{51}$, we apply Lemmas \ref{parabolic_pre_lem_fkB}, \ref{parabolic_pre_smooth_lem_2} and \ref{parabolic_pre_smooth_lem_partt} to obtain
\begin{align*}
  I_{51}\leq C\varepsilon^2\|\partial_t\widetilde{\mathfrak{B}}_{d+1}K_\varepsilon(\nabla u_0)\|_{L^{q', 2}(\Omega_T; \bm{L^2})}\|\nabla\psi\|_{L^{q, 2}(\Omega_T)}\leq C\varepsilon\|S_\varepsilon(\nabla u_0)\|_{L^{\frac{2q}{q-2}, 2}(\Omega_T\setminus\Omega_T^{4, \varepsilon})}\|\nabla\psi\|_{L^{q, 2}(\Omega_T)},
\end{align*}
where $C$ depends only on $d$, $\mu$, $[A]_{\mathscr{H}(\Omega_T; \bm{L^\infty})}$. Moreover, it follows from Lemmas \ref{parabolic_pre_lem_fkB} and \ref{parabolic_pre_smooth_lem_2} that
\begin{align*}
  I_{52}&\leq C\varepsilon^2\|\widetilde{\mathfrak{B}}_{d+1}\nabla S_\varepsilon(\partial_t u_0)\eta_\varepsilon\|_{L^{2, p}(\Omega_T; \bm{L^{2, p}})}\|\nabla\psi\|_{L^{2, p'}(\Omega_T)}\\&\leq C\varepsilon\|(\nabla\varphi_2)_\varepsilon*\partial_t u_0\|_{L^{2, p}(\Omega_T\setminus\Omega_T^{4, \varepsilon})}\|\nabla\psi\|_{L^{2, p'}(\Omega_T)},\end{align*}
where $(\nabla\varphi_2)_\varepsilon(x)=\varepsilon^{-d}\nabla\varphi_2(\frac{x}{\varepsilon})$ and $C$ depends only on $d$, $\mu$. On the other hand,
\begin{align*}I_{53}&= \varepsilon^2\iint_{\Omega_T}|[S_\varepsilon(\widetilde{\mathfrak{B}}_{d+1}S_\varepsilon(\nabla u_0)\partial_t\eta_\varepsilon \cdot \mathbf{1}_{(0, 5\varepsilon^2)\cup(T-5\varepsilon^2, T)})]^\varepsilon||\nabla\psi|\\&\leq C\|\nabla u_0\|_{L^2(\Omega\times [(0, 6\varepsilon^2)\cup(T-6\varepsilon^2, T)])}\|\nabla\psi\|_{L^2(\Omega\times [(0, 6\varepsilon^2)\cup(T-6\varepsilon^2, T)])},
\end{align*}
where we have applied Lemma \ref{parabolic_pre_smooth_lem_2} in the second step. As a result, $I_5$ can be bounded by
\begin{align*}
  &C\varepsilon\|S_\varepsilon(\nabla u_0)\|_{L^{\frac{2q}{q-2}, 2}(\Omega_T\setminus\Omega_T^{4, \varepsilon})}\|\nabla\psi\|_{L^{q, 2}(\Omega_T)}+C\varepsilon\|(\nabla\varphi_2)_\varepsilon*\partial_t u_0\|_{L^{2, p}(\Omega_T\setminus\Omega_T^{4, \varepsilon})}\|\nabla\psi\|_{L^{2, p'}(\Omega_T)}\\&+C\|\nabla u_0\|_{L^2(\Omega\times [(0, 6\varepsilon^2)\cup(T-6\varepsilon^2, T)])}\|\nabla\psi\|_{L^2(\Omega\times [(0, 6\varepsilon^2)\cup(T-6\varepsilon^2, T)])}.
\end{align*}

By combining \eqref{parabolic_conver_es_Lw} and the estimates of $I_1$--$I_5$, we conclude the desired estimate.
\end{proof}

\begin{theorem}\label{parabolic_conver_thm_H1_1}
  Suppose the assumptions of Lemma \ref{parabolic_conver_lem_Lw} hold. Then for $\frac{2d}{d+2}\leq q\leq q_0=\frac{2d}{d+1}$, $$\|w_\varepsilon\|_{L^2(0, T; H^1(\Omega))}\leq C\varepsilon^{1+\frac{d}{2}-\frac{d}{q}}\{\|\nabla u_0\|_{L^2(0, T; \dot{W}^{1, q}(\Omega))}+\|\partial_t u_0\|_{L^2(0, T; L^q(\Omega))}\},$$
  where $C$ depends only on $d, \mu, q, [A]_{\mathscr{H}(\Omega_T; \bm{L^\infty})}$ and the Lipschitz character of $\Omega$. In particular,
  $$\|w_\varepsilon\|_{L^2(0, T; H^1(\Omega))}\leq C\varepsilon^{1/2}\{\|\nabla u_0\|_{L^2(0, T; \dot{W}^{1, q_0}(\Omega))}+\|\partial_t u_0\|_{L^2(0, T; L^{q_0}(\Omega))}\}.$$
\end{theorem}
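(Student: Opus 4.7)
The plan is to run a parabolic energy estimate on $w_\varepsilon$ itself and then reduce the right-hand side to $\|\nabla w_\varepsilon\|_{L^2(\Omega_T)}$ times a quantity of the advertised size via Lemma \ref{parabolic_conver_lem_Lw}. First, I observe that the cutoff $\eta_\varepsilon$ inside $K_\varepsilon$ vanishes on $\Omega_{4\varepsilon}$ and outside $(4\varepsilon^2, T-4\varepsilon^2)$, so the two corrector tails in \eqref{parabolic_conver_def_w} are zero on the parabolic boundary. Hence $w_\varepsilon = u_\varepsilon - u_0$ there, and since $u_\varepsilon$ and $u_0$ share Dirichlet and initial data, $w_\varepsilon \in L^2(0,T; H^1_0(\Omega))$ with $w_\varepsilon(\cdot, 0) = 0$. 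A density argument permits the use of $\psi = w_\varepsilon$ in Lemma \ref{parabolic_conver_lem_Lw}.

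Testing \eqref{parabolic_conver_iden_Lw_3} against $w_\varepsilon$, using ellipticity \eqref{parabolic_intro_cond_elliptic} and $w_\varepsilon(0) = 0$, the usual parabolic energy identity gives
$$\tfrac{1}{2}\|w_\varepsilon(T)\|_{L^2(\Omega)}^2 + \mu\|\nabla w_\varepsilon\|_{L^2(\Omega_T)}^2 \leq |\text{RHS from Lemma \ref{parabolic_conver_lem_Lw}}|.$$
I then apply Lemma \ref{parabolic_conver_lem_Lw} with $p = q = 2$ (so that $p' = 2$, $p^* = 2d/(d-2)$, $\tfrac{2q}{q-2} = \infty$; the borderline case $d = 2$ requires $p < 2$ but proceeds identically). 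Every factor $\|\nabla\psi\|_{*}$ on the right is then the $L^2(\Omega_T)$ norm, so the bound becomes $M(\varepsilon)\cdot\|\nabla w_\varepsilon\|_{L^2(\Omega_T)}$, and direct division gives $\|\nabla w_\varepsilon\|_{L^2(\Omega_T)} \leq C M(\varepsilon)$. Poincar\'e's inequality promotes this to the full $L^2(0,T; H^1)$ control of $w_\varepsilon$.

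The task thus reduces to showing each piece of $M(\varepsilon)$ is $O(\varepsilon^{1 + d/2 - d/q})\cdot\{\|\nabla u_0\|_{L^2(0,T;\dot{W}^{1,q}(\Omega))}+\|\partial_t u_0\|_{L^{2,q}(\Omega_T)}\}$. I would proceed term by term: $\|\nabla u_0 - S_\varepsilon(\nabla u_0)\|_{L^2}$ by the sharp commutator estimate \eqref{parabolic_pre_es_smoothing_2}; the boundary-layer factor $\|\nabla u_0\|_{L^2(\Omega_T^{6, \varepsilon})}$ by splitting into the spatial layer $\Omega_{6\varepsilon}\times(0,T)$ (Lemma \ref{parabolic_conver_boundary_lem_spatial}) and the two temporal end slabs (Corollary \ref{parabolic_conver_boundary_coro_time}); $\varepsilon\|S_\varepsilon(\nabla u_0)\|_{L^{2, 2^*}}$ by \eqref{parabolic_pre_es_smoothing} with the exponent jump $q^* \to 2^*$ (paying $\varepsilon^{d/2 - d/q}$) together with Sobolev's embedding $\dot{W}^{1,q}\hookrightarrow L^{q^*}$; $\varepsilon\|S_\varepsilon(\nabla^2 u_0)\|_{L^2}$ by the same $\varepsilon^{d/2 - d/q}$ Lebesgue gain from \eqref{parabolic_pre_es_smoothing} at the level of second derivatives; and $\varepsilon\|(\nabla\varphi_2)_\varepsilon * \partial_t u_0\|_{L^2}$ by Young's convolution inequality, where a direct rescaling computation yields $\|(\nabla\varphi_2)_\varepsilon\|_{L^r} = C\varepsilon^{d/2 - d/q}$ for the conjugate exponent $r$ determined by $1/r = 3/2 - 1/q$.

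The delicate term, and the real obstacle, is $\varepsilon\|S_\varepsilon(\nabla u_0)\|_{L^{\infty,2}}$ produced by the flux corrector $\mathfrak{B}_{(d+1)ij}$, since the hypotheses give no direct $L^\infty_t$ control on $\nabla u_0$. The remedy is a two-step trade: first pay the $\varepsilon^{-2/\bar q}$ tariff of \eqref{parabolic_pre_es_smoothing_t} to buy time integrability up to $L^{\bar q}_t L^2_x$, then cash in the parabolic embedding Lemma \ref{parabolic_conver_boundary_lem_embedding} which converts the joint $\dot{W}^{2,q}$ and $\partial_t\in L^{2,q}$ control exactly into an $L^{\bar q}(0,T; L^2(\Omega))$ bound for $\nabla u_0$. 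The exponent identity $2/\bar q = d/q - d/2$ between these two lemmas matches the accounting, producing the cancellation $\varepsilon\cdot\varepsilon^{-2/\bar q} = \varepsilon^{1 + d/2 - d/q}$. Taking $q = q_0 = \frac{2d}{d+1}$ gives exponent $1/2$ and hence the advertised $\varepsilon^{1/2}$ special case.
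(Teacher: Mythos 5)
Your proposal is correct and follows essentially the same route as the paper: take $\psi=w_\varepsilon$ with $p=q=2$ in Lemma \ref{parabolic_conver_lem_Lw}, absorb $\|\nabla w_\varepsilon\|_{L^2(\Omega_T)}$ via the energy inequality, control the smoothing terms by Lemma \ref{parabolic_conver_pre_lem_1}, the boundary layers by Lemma \ref{parabolic_conver_boundary_lem_spatial} and Corollary \ref{parabolic_conver_boundary_coro_time}, and the $L^{\infty,2}$ term by trading $\varepsilon^{-2/r}$ from \eqref{parabolic_pre_es_smoothing_t} against the parabolic embedding of Lemma \ref{parabolic_conver_boundary_lem_embedding} with $\tfrac{2}{r}=\tfrac{d}{q}-\tfrac{d}{2}$, exactly as in the paper (your reuse of the symbol $\bar q$, which the paper reserves for the Meyers exponent, is only a cosmetic clash).
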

\begin{proof}
  Note that $w_\varepsilon(t)\in H^1_0(\Omega)$ for all $t\in [0, T]$ and $w_\varepsilon(0)=0$. By setting $p=q=2$ and $\psi=w_\varepsilon$ in \eqref{parabolic_conver_es_w_psi}, we have
  \begin{align*}
    \begin{split}
      &\|w_\varepsilon\|_{L^2(0, T; H^1(\Omega))}\leq C\|\nabla u_0-S_\varepsilon(\nabla u_0)\|_{L^{2}(\Omega_T\setminus\Omega_T^{4, \varepsilon})}+C\|\nabla u_0\|_{L^2(\Omega_T^{6, \varepsilon})}+C\varepsilon\|S_\varepsilon(\nabla u_0)\|_{L^{2, 2^*}(\Omega_T\setminus\Omega_T^{4, \varepsilon})}\\&+C\varepsilon\|S_\varepsilon(\nabla^2 u_0)\|_{L^{2}(\Omega_T\setminus\Omega_T^{4, \varepsilon})}+C\varepsilon\|(\nabla\varphi_2)_\varepsilon*\partial_t u_0\|_{L^{2}(\Omega_T\setminus\Omega_T^{4, \varepsilon})}+C\varepsilon\|S_\varepsilon(\nabla u_0)\|_{L^{\infty, 2}(\Omega_T\setminus\Omega_T^{4, \varepsilon})}\\&\doteq J_1+J_2+J_3+J_4+J_5+J_6.
    \end{split}
  \end{align*}
  By Lemma \ref{parabolic_conver_pre_lem_1}, it is not hard to see that
  \begin{align*}
    J_1+J_3+J_4+J_5\leq C\varepsilon^{1+\frac{d}{2}-\frac{d}{q}}\{\|\nabla u_0\|_{L^2(0, T; \dot{W}^{1, q}(\Omega))}+\|\partial_t u_0\|_{L^2(0, T; L^q(\Omega))}\}.
  \end{align*}
Similarly, it follows from Lemmas \ref{parabolic_conver_pre_lem_1} and \ref{parabolic_conver_boundary_lem_embedding} that for $\frac{2}{r}=\frac{d}{q}-\frac{d}{2}$
  \begin{align*}
    J_6\leq C\varepsilon^{1-\frac{2}{r}}\|\nabla u_0\|_{L^r(0, T; L^2(\Omega))}\leq C\varepsilon^{1+\frac{d}{2}-\frac{d}{q}}\{\|\nabla u_0\|_{L^2(0, T; \dot{W}^{1, q}(\Omega))}+\|\partial_t u_0\|_{L^2(0, T; L^q(\Omega))}\}.
  \end{align*}
  On the other hand, thanks to Lemma \ref{parabolic_conver_boundary_lem_spatial} and Corollary \ref{parabolic_conver_boundary_coro_time}, we have
  \begin{align}\label{parabolic_conver_es_I2_boundary}
    \begin{split}
      J_2&\leq C\{\|\nabla u_0\|_{L^2(\Omega_{6\varepsilon}\times (0, T))}+\|\nabla u_0\|_{L^2(\Omega\times [(0, 6\varepsilon^2)\cup(T-6\varepsilon^2, T)])}\}\\&\leq C\varepsilon^{1+\frac{d}{2}-\frac{d}{q}}\{\|\nabla u_0\|_{L^2(0, T; \dot{W}^{1, q}(\Omega))}+\|\partial_t u_0\|_{L^2(0, T; L^q(\Omega))}\}.
    \end{split}
  \end{align}
  This completes the proof.
\end{proof}

\begin{corollary}\label{parabolic_conver_coro_H1}
  Suppose the assumptions of Lemma \ref{parabolic_conver_lem_Lw} hold. Then for any $\psi\in L^2(0, T; C_0^1(\Omega))$,
  \begin{align*}
    \begin{split}
    &\quad\Big|\int_0^T\langle\partial_tw_\varepsilon, \psi\rangle_{H^{-1}(\Omega)\times H^1_0(\Omega)}+\iint_{\Omega_T}A^\varepsilon\nabla w_\varepsilon\cdot\nabla\psi \Big|\\&\leq C\varepsilon^{1/2}\{\|\nabla u_0\|_{L^2(0, T; \dot{W}^{1, q_0}(\Omega))}+\|\partial_t u_0\|_{L^2(0, T; L^{q_0}(\Omega))}\}\cdot\|\nabla\psi\|_{L^2(\Omega_T)}.
  \end{split}
  \end{align*}
\end{corollary}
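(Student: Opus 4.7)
The plan is to apply Lemma \ref{parabolic_conver_lem_Lw} with the specialization $p = q = 2$. With these exponents, every mixed-norm factor of $\psi$ on the right-hand side of \eqref{parabolic_conver_es_w_psi} collapses to $\|\nabla\psi\|_{L^2(\Omega_T)}$ (for the slot $L^{q,2}$ this uses $2q/(q-2)=\infty$ when $q=2$; the boundary-layer factor $\|\nabla\psi\|_{L^2(\Omega_T^{6,\varepsilon})}$ is trivially dominated). After factoring out this common quantity, the corollary reduces to showing that each of the six data-side terms
\begin{gather*}
\|\nabla u_0 - S_\varepsilon(\nabla u_0)\|_{L^2(\Omega_T\setminus\Omega_T^{4,\varepsilon})},\quad \|\nabla u_0\|_{L^2(\Omega_T^{6,\varepsilon})},\quad \varepsilon\|S_\varepsilon(\nabla u_0)\|_{L^{2, 2^*}(\Omega_T\setminus\Omega_T^{4,\varepsilon})}, \\
\varepsilon\|S_\varepsilon(\nabla^2 u_0)\|_{L^2(\Omega_T\setminus\Omega_T^{4,\varepsilon})},\quad \varepsilon\|(\nabla\varphi_2)_\varepsilon * \partial_t u_0\|_{L^2(\Omega_T\setminus\Omega_T^{4,\varepsilon})},\quad \varepsilon\|S_\varepsilon(\nabla u_0)\|_{L^{\infty, 2}(\Omega_T\setminus\Omega_T^{4,\varepsilon})}
\end{gather*}
is bounded by $C\varepsilon^{1/2}\bigl\{\|\nabla u_0\|_{L^2(0,T;\dot{W}^{1,q_0}(\Omega))}+\|\partial_t u_0\|_{L^2(0,T;L^{q_0}(\Omega))}\bigr\}$.

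These six quantities are precisely the terms $J_1,\dots,J_6$ already treated in the proof of Theorem \ref{parabolic_conver_thm_H1_1} at the choice $q = q_0$. I therefore re-invoke the same estimates: the smoothing remainders $J_1, J_3, J_4, J_5$ are handled by Lemma \ref{parabolic_conver_pre_lem_1} together with \eqref{parabolic_es_Sf}; the boundary-layer term $J_2$ splits into a spatial piece controlled by Lemma \ref{parabolic_conver_boundary_lem_spatial} and a temporal piece controlled by Corollary \ref{parabolic_conver_boundary_coro_time}; and the $L^{\infty,2}$-in-time factor $J_6$ is treated by first paying $\varepsilon^{-2/r}$ via \eqref{parabolic_pre_es_smoothing_t} to exchange $L^\infty_t L^2_x$ for $L^r_t L^2_x$, and then applying the parabolic embedding Lemma \ref{parabolic_conver_boundary_lem_embedding} with $2/r = d/q_0 - d/2 = 1/2$. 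All six contributions then carry the common scaling $\varepsilon^{1/2}$, and reinstating the factor $\|\nabla\psi\|_{L^2(\Omega_T)}$ yields the asserted inequality.

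The main obstacle is a packaging issue rather than a computational one: Lemma \ref{parabolic_conver_lem_Lw} has been designed precisely so that the data side (correctors, flux correctors, and boundary layers) is decoupled from $\psi$, and this corollary merely keeps $\psi$ as a generic test function where Theorem \ref{parabolic_conver_thm_H1_1} used $\psi = w_\varepsilon$ to close the energy estimate. The duality-ready formulation is exactly what will feed into the proof of the sharp $L^2(0,T;L^{p_0})$ rate in Theorem \ref{parabolic_conver_thm_conver}. The one nontrivial new ingredient, compared with the energy identity used for Theorem \ref{parabolic_conver_thm_H1_1}, is the treatment of the $L^{\infty,2}$ term $J_6$, which is precisely why the parabolic embedding Lemma \ref{parabolic_conver_boundary_lem_embedding} was introduced in Section \ref{parabolic_sec_boundary-layers}.
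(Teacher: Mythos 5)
Your proposal is correct and follows exactly the route the paper intends: specialize Lemma \ref{parabolic_conver_lem_Lw} to $p=q=2$ so every test-function factor collapses to $\|\nabla\psi\|_{L^2(\Omega_T)}$, and then bound the data-side terms exactly as $J_1$--$J_6$ are bounded (at $q=q_0$, giving the exponent $1+\tfrac{d}{2}-\tfrac{d}{q_0}=\tfrac12$) in the proof of Theorem \ref{parabolic_conver_thm_H1_1}, including the $L^{\infty,2}$ term via \eqref{parabolic_pre_es_smoothing_t} and Lemma \ref{parabolic_conver_boundary_lem_embedding}. This is essentially the same argument the paper relies on when stating the corollary without proof.
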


Now we prove the optimal $O(\varepsilon)$-convergence rate in $L^2(0, T; L^{p_0}(\Omega))$ stated in Theorem \ref{parabolic_conver_thm_conver}. We first introduce the dual problem. For $F\in C_0^\infty(\Omega_T)$, let $v_\varepsilon$ ($\varepsilon\geq 0$) be the weak solution to the following problem
\begin{equation}\label{parabolic_conver_eq_dual}
\begin{cases}
  -\partial_t v_\varepsilon+\mathcal{L}^*_\varepsilon v_\varepsilon =F  &\mathrm{in}~\Omega\times (0, T),\\
  v_\varepsilon=0 &\mathrm{on}~\partial \Omega\times(0, T),\\
 v_\varepsilon=0  & \mathrm{on}~ \Omega\times\{t=T\},
\end{cases}
\end{equation}
where $\mathcal{L}_\varepsilon^*$ is the adjoint operator of $\mathcal{L}_\varepsilon$ ($\varepsilon\geq 0$). By setting $\widehat{v}_\varepsilon(s)=v_\varepsilon(-s)$, one can see that $\widehat{v}_\varepsilon$ solves
\begin{equation}\label{parabolic_conver_eq_dual_hat}
\begin{cases}
  \partial_s \widehat{v}_\varepsilon+\widehat{\mathcal{L}}^*_\varepsilon \widehat{v}_\varepsilon =\widehat{F}  &\mathrm{in}~\Omega\times (-T, 0),\\
  \widehat{v}_\varepsilon=0 &\mathrm{on}~\partial \Omega\times(-T, 0),\\
 \widehat{v}_\varepsilon=0  & \mathrm{on}~ \Omega\times\{s=-T\},
\end{cases}
\end{equation}
where $\widehat{\mathcal{L}}^*_\varepsilon$ is the operator given by \eqref{parabolic_intro_exp_L} with $A(x, t; y, \tau)$ replaced by $A^*(x, -t; y, -\tau)$ and $\widehat{F}(x, s)=F(x, -s)$. Observe that $A^*(x, -t; y, -\tau)$ satisfies the same conditions on $\Omega\times (-T, 0)$ as $A$. Thus, the process and results discussed above for problem \eqref{parabolic_intro_eq1} remain valid for problem \eqref{parabolic_conver_eq_dual_hat}, thereby holding for problem \eqref{parabolic_conver_eq_dual}. Especially, the correctors $\chi^*$ and flux correctors $\mathfrak{B}^*$ could be introduced for the operator $-\partial_t+\mathcal{L}_\varepsilon^*$ (see also \eqref{parabolic_pre_eq_chi_dual}).

Define
\begin{align}
  \label{parabolic_conver_def_varpi}
  \varpi_\varepsilon=v_\varepsilon-v_0-\varepsilon [S_\varepsilon(\widetilde{\chi}^* K_\varepsilon(\nabla v_0))]^\varepsilon-\varepsilon^2[\partial_{x_k}S_\varepsilon(\widetilde{\mathfrak{B}}_{(d+1)kj}^* K_\varepsilon(\partial_j v_0))]^\varepsilon.
\end{align}
Then $\varpi_\varepsilon$ has the same estimates as $w_\varepsilon$. As like Theorem \ref{parabolic_conver_thm_H1_1}, we have,

\begin{corollary}\label{parabolic_conver_coro_varpi}
  Let $\varpi_\varepsilon$ be defined by \eqref{parabolic_conver_def_varpi}. Then
  \begin{align*}
    \|\varpi_\varepsilon\|_{L^2(0, T; H^1(\Omega))}\leq C\varepsilon^{1/2}\{\|\nabla v_0\|_{L^2(0, T; \dot{W}^{1, q_0}(\Omega))}+\|\partial_t v_0\|_{L^2(0, T; L^{q_0}(\Omega))}\},
  \end{align*}
where $C$ depends only on $d, \mu, [A]_{\mathscr{H}(\Omega_T; \bm{L^\infty})}$ and the Lipschitz character of $\Omega$.
\end{corollary}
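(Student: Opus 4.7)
The plan is to obtain this estimate as a direct consequence of Theorem \ref{parabolic_conver_thm_H1_1} applied to the time-reversed dual problem. Specifically, set $\widehat{v}_\varepsilon(x,s)=v_\varepsilon(x,-s)$ on $\Omega\times(-T,0)$, so that $\widehat{v}_\varepsilon$ solves the forward problem \eqref{parabolic_conver_eq_dual_hat} whose coefficient $\widehat{A}(x,s;y,\sigma):=A^*(x,-s;y,-\sigma)$ is $1$-periodic in $(y,\sigma)$, satisfies the ellipticity/boundedness conditions \eqref{parabolic_intro_cond_elliptic} with the same constant $\mu$, and lies in $\mathscr{H}(\Omega\times(-T,0);\bm{L^\infty})$ with seminorm identical to $[A]_{\mathscr{H}(\Omega_T;\bm{L^\infty})}$ (both spatial gradient and $W^{\frac{1}{2},2}$-in-time seminorm are invariant under $t\mapsto -t$ and under taking the adjoint in the internal variables).

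With this identification, the correctors and flux correctors constructed in Sections \ref{parabolic_sec_corrector}--\ref{parabolic_sec_flux-correctors} for $\partial_s+\widehat{\mathcal{L}}^*_\varepsilon$ are nothing but $\chi^*(x,-s;y,-\sigma)$ and $\mathfrak{B}^*(x,-s;y,-\sigma)$ (up to the obvious sign convention in the $(d+1)$-component coming from $\partial_\tau\mapsto -\partial_\sigma$), and by Lemmas \ref{parabolic_pre_lem_chi} and \ref{parabolic_pre_lem_fkB} they satisfy the very same bounds. Consequently the object
\[
\widehat{\varpi}_\varepsilon(x,s):=\varpi_\varepsilon(x,-s)
\]
is exactly the auxiliary function defined by \eqref{parabolic_conver_def_w} for the forward problem \eqref{parabolic_conver_eq_dual_hat} (with $\widehat{v}_0$ playing the role of $u_0$), and Theorem \ref{parabolic_conver_thm_H1_1} applies verbatim.

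Applying that theorem with $q=q_0$ on the interval $(-T,0)$ yields
\[
\|\widehat{\varpi}_\varepsilon\|_{L^2(-T,0;H^1(\Omega))}\le C\varepsilon^{1/2}\bigl\{\|\nabla \widehat{v}_0\|_{L^2(-T,0;\dot{W}^{1,q_0}(\Omega))}+\|\partial_s \widehat{v}_0\|_{L^2(-T,0;L^{q_0}(\Omega))}\bigr\}.
\]
Undoing the change of variable $s\mapsto -t$ restores exactly the left- and right-hand sides of the claimed estimate, with a constant depending only on $d,\mu,[A]_{\mathscr{H}(\Omega_T;\bm{L^\infty})}$ and the Lipschitz character of $\Omega$.

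There is no real obstacle here beyond careful bookkeeping: one must verify that every auxiliary estimate invoked in the proof of Theorem \ref{parabolic_conver_thm_H1_1} (the smoothing estimates of Section \ref{parabolic_subsec_smoothing}, the boundary-layer estimates of Section \ref{parabolic_sec_boundary-layers}, and the identities \eqref{parabolic_conver_iden_Lw_1}--\eqref{parabolic_conver_iden_Lw_3}) is symmetric under $t\mapsto -t$ and under replacing $A$ by $A^*$. Each is: the smoothing operators $S_\varepsilon^t, S_\varepsilon^x$ are even in their arguments (or can be taken so), the boundary layers $\Omega_T^{k,\varepsilon}$ are preserved by time reversal, and the energy/duality argument treats $\partial_t$ and $-\partial_t$ symmetrically on functions vanishing at the appropriate endpoint. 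Thus the same proof goes through with $(A,\chi,\mathfrak{B},u_0,w_\varepsilon)$ replaced throughout by $(A^*,\chi^*,\mathfrak{B}^*,v_0,\varpi_\varepsilon)$, and we omit the details.
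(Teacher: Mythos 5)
Your proposal is correct and follows essentially the same route as the paper, which likewise reduces the corollary to Theorem \ref{parabolic_conver_thm_H1_1} via the time reversal $\widehat{v}_\varepsilon(s)=v_\varepsilon(-s)$ and the observation that $A^*(x,-t;y,-\tau)$ satisfies the same hypotheses, so that the correctors $\chi^*,\mathfrak{B}^*$ inherit the bounds of Lemmas \ref{parabolic_pre_lem_chi} and \ref{parabolic_pre_lem_fkB}. The only caveat is notational: avoid writing $\widehat{A}$ for the reversed coefficient, since the paper reserves that symbol for the effective matrix \eqref{parabolic_pre_def_hatA}.
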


\begin{lemma}\label{parabolic_conver_lem_v0}
  Suppose $\Omega$ is a bounded $C^{1, 1}$ domain in $\mathbb{R}^d$ and $A$ satisfies \eqref{parabolic_intro_cond_elliptic}--\eqref{parabolic_intro_cond_AC}. Let $v_0$ be the solution to problem \eqref{parabolic_conver_eq_dual} with $\varepsilon=0$. Then
  \begin{align}\label{parabolic_conver_es_v0_1}
    \|v_0\|_{L^2(0, T; W^{2, q_0}(\Omega))}+\|\partial_t v_0\|_{L^2(0, T; L^{q_0}(\Omega))}\leq C\|F\|_{L^2(0, T; L^{q_0}(\Omega))},
  \end{align}
where $C$ depends only on $d, \mu, \Omega, \|\nabla_xA\|_{L^{\infty, d}(\Omega_T; \bm{L^\infty})}$ and the $\mathrm{VMO}_x$ character $\varrho$ of $\widehat{A}$ given by \eqref{parabolic_conver_es_VMOx}. Consequently,
\begin{align}
  \label{parabolic_conver_es_v0_2}
  \|\nabla v_0\|_{L^2(\Omega^{6, \varepsilon}_T)}\leq C\varepsilon^{1/2}\|F\|_{L^2(0, T; L^{q_0}(\Omega))}.
\end{align}
\end{lemma}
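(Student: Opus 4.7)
The strategy is to reduce the backward problem \eqref{parabolic_conver_eq_dual} (with $\varepsilon=0$) to a standard forward parabolic problem by the time reversal $\hat v_0(x,s):=v_0(x,T-s)$, which solves a problem of the form $\partial_s\hat v_0+\widehat{\mathcal L_0^*}\hat v_0=\hat F$ with homogeneous Dirichlet and initial data. The effective coefficient of this reversed operator has the same regularity as $\widehat A^*$, and by Corollary \ref{parabolic_pre_coro_hatA} it lies in $W^{\frac12,2}(0,T;L^\infty(\Omega))\cap L^\infty(0,T;W^{1,d}(\Omega))$. The first step is to convert the $L^\infty(0,T;W^{1,d}(\Omega))$ bound into a uniform VMO$_x$ modulus: by the endpoint embedding $W^{1,d}(\Omega)\hookrightarrow\mathrm{BMO}(\Omega)$ together with the continuity in $L^d$ of translations, one obtains a quantitative modulus $\varrho(r)\to 0$ as $r\to 0$ for the mean oscillation of $\widehat A(\cdot,t)$ over balls of radius $r$, depending only on $\|\nabla_xA\|_{L^{\infty,d}(\Omega_T;\bm{L^\infty})}$ and $\Omega$.

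Granting the VMO$_x$ estimate, one invokes the Calder\'{o}n--Zygmund theory for divergence-form parabolic systems with coefficients that are VMO$_x$ uniformly in $t$ and measurable in $t$, in the mixed-norm class $L^2_t L^{q_0}_x$ on $C^{1,1}$ cylinders (this is the standard Dong--Kim-type $W^{2,p}_x$ regularity for parabolic systems, applied in the $L^2_tL^{q_0}_x$ setting). Since the homogenized operator is uniformly elliptic (with the ellipticity of $\widehat A$ inherited from $A$) and $F\in L^2(0,T;L^{q_0}(\Omega))$, one concludes
\begin{align*}
\|v_0\|_{L^2(0,T;W^{2,q_0}(\Omega))}+\|\partial_t v_0\|_{L^2(0,T;L^{q_0}(\Omega))}\leq C\|F\|_{L^2(0,T;L^{q_0}(\Omega))},
\end{align*}
which is \eqref{parabolic_conver_es_v0_1}. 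The constant depends on $d,\mu,\Omega$ and, through the VMO$_x$ modulus, on $\|\nabla_x A\|_{L^{\infty,d}(\Omega_T;\bm{L^\infty})}$.

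For the boundary-layer estimate \eqref{parabolic_conver_es_v0_2}, I decompose $\Omega_T^{6,\varepsilon}$ into its spatial part $\Omega_{6\varepsilon}\times(0,T)$ and its two temporal parts $\Omega\times(0,6\varepsilon^2)$ and $\Omega\times(T-6\varepsilon^2,T)$. To the spatial piece, apply Lemma \ref{parabolic_conver_boundary_lem_spatial} with $u=\nabla v_0$ and $p=q_0$; observing that $1+\tfrac{d}{2}-\tfrac{d}{q_0}=1+\tfrac{d}{2}-\tfrac{d+1}{2}=\tfrac12$, this yields
\begin{align*}
\|\nabla v_0\|_{L^2(\Omega_{6\varepsilon}\times(0,T))}\leq C\varepsilon^{1/2}\|\nabla v_0\|_{L^2(0,T;\dot W^{1,q_0}(\Omega))}.
\end{align*}
To the temporal pieces, apply Corollary \ref{parabolic_conver_boundary_coro_time} with $u=v_0$, $p=q_0$ and $\delta^2=6\varepsilon^2$ (at $s=0$ and $s=T-6\varepsilon^2$), again producing the factor $\varepsilon^{1/2}$. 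Summing and using \eqref{parabolic_conver_es_v0_1} converts the right-hand side to $C\varepsilon^{1/2}\|F\|_{L^2(0,T;L^{q_0}(\Omega))}$.

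The main obstacle is the first step: obtaining the $W^{2,q_0}_x$--$\partial_t\in L^{q_0}_x$ regularity with the correct constant under the \emph{minimal} smoothness afforded by Corollary \ref{parabolic_pre_coro_hatA}. The key subtlety is that $\widehat A$ is not classically continuous in $x$; one only has $W^{1,d}_x$, which is exactly the borderline scale for BMO. The quantitative extraction of a VMO$_x$ modulus $\varrho$ from $\|\nabla_x A\|_{L^{\infty,d}(\Omega_T;\bm{L^\infty})}$ (and the verification that this modulus is enough to run the mixed-norm Calder\'on--Zygmund estimate up to the boundary of a $C^{1,1}$ cylinder) is the delicate part; the remaining boundary-layer step is a direct computation from the preceding lemmas.
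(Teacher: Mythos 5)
Your overall architecture (reduce to a mixed-norm $L^2_tL^{q_0}_x$ Calder\'on--Zygmund estimate for the homogenized operator with $\mathrm{VMO}_x$ coefficients, then get \eqref{parabolic_conver_es_v0_2} from Lemma \ref{parabolic_conver_boundary_lem_spatial} and Corollary \ref{parabolic_conver_boundary_coro_time}) is the same as the paper's, and the boundary-layer half of your argument is correct as written. The genuine gap is in your first step, the extraction of the $\mathrm{VMO}_x$ modulus. You claim that $\widehat{A}\in L^\infty(0,T;W^{1,d}(\Omega))$ alone yields, via $W^{1,d}\hookrightarrow \mathrm{BMO}$ and continuity of translations, a modulus $\varrho(r)\to 0$ ``depending only on $\|\nabla_x A\|_{L^{\infty,d}(\Omega_T;\bm{L^\infty})}$ and $\Omega$''. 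This is false on two counts. First, the VMO modulus of a $W^{1,d}$ function is not controlled by its norm: the rescaled profiles $\phi((x-x_0)/\rho)$ have $\|\nabla_x\cdot\|_{L^d}$ independent of $\rho$ (critical scaling) while their mean oscillation at scale $r$ stays of order one whenever $\rho\lesssim r$; translation continuity in $L^d$ is likewise a qualitative property of the individual function, not of its norm. This is precisely why the lemma's statement lists the dependence on the $\mathrm{VMO}_x$ character $\varrho$ of $\widehat{A}$ as a separate quantity rather than folding it into $\|\nabla_x A\|_{L^{\infty,d}}$. Second, and more seriously, even the qualitative statement $\lim_{R\to 0}\varrho(R)=0$ for the space-time averaged quantity \eqref{parabolic_conver_es_VMOx} does not follow from $L^\infty_t W^{1,d}_x$ alone: for a.e.\ fixed $\varsigma$ one gets $\fint\fint|\widehat{A}(\omega,\varsigma)-\widehat{A}(z,\varsigma)|\leq C\|\nabla\widehat{A}(\cdot,\varsigma)\|_{L^d(B(x,r))}$, but there is no uniformity in $\varsigma$ of the rate at which the right-hand side vanishes; a coefficient whose spatial gradient concentrates at smaller and smaller scales as $\varsigma$ varies (e.g.\ $\phi((x-x_0)/\rho(\varsigma))$ with $\rho(\varsigma)\downarrow 0$) defeats the argument. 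The paper closes exactly this hole by using the temporal regularity $\widehat{A}\in W^{\frac12,2}(0,T;L^\infty(\Omega))$ from Corollary \ref{parabolic_pre_coro_hatA}: it introduces the piecewise time-averaged coefficients $\mathfrak{A}_N$, which are finitely many $W^{1,d}$ functions and hence have a uniform VMO modulus \eqref{parabolic_conver_es_VMO}, and controls $\widehat{A}-\mathfrak{A}_N$ on small time windows through $\sup_t[\widehat{A}]_{W^{\frac12,2}(t,t+r^2;L^\infty(\Omega))}\to 0$ \eqref{parabolic_conver_es_VMO_2}. Your proposal never uses the $W^{\frac12,2}$-in-time regularity of $\widehat{A}$, and without it this step fails.

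A smaller imprecision: you invoke ``Calder\'on--Zygmund theory for divergence-form parabolic systems'' to conclude the $W^{2,q_0}_x$ bound directly, but a $W^{2,p}$ estimate necessarily differentiates the equation; the paper rewrites $-\mathrm{div}(\widehat{A}\nabla v_0)$ in non-divergence form, applies the non-divergence $L^q$-$L^p$ theory with $\mathrm{VMO}_x$ coefficients, and absorbs the commutator term via $\|\nabla\widehat{A}\nabla v_0\|_{L^2L^{q_0}}\leq \|\nabla\widehat{A}\|_{L^\infty L^d}\|\nabla v_0\|_{L^2L^{q_0^*}}$, with the last factor bounded by the divergence-form $L^q$-$L^p$ estimate. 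This is where $\|\nabla_x A\|_{L^{\infty,d}(\Omega_T;\bm{L^\infty})}$ enters the constant, and your write-up should make that two-step reduction explicit.
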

\begin{proof}
 Due to Corollary \ref{parabolic_pre_coro_hatA}, $\widehat{A}\in W^{\frac{1}{2}, 2}(0, T; L^\infty(\Omega))\cap L^\infty(0, T; W^{1, d}(\Omega))$. We claim that $\widehat{A}$ satisfies the so-called $\mathrm{VMO}_x$ condition $$\lim_{R\rightarrow 0}\varrho(R)=0,$$ where
  \begin{align}\label{parabolic_conver_es_VMOx}
    \varrho(R):=\sup_{0<r<R}\sup_{\substack{B(x, r)\subset\Omega\\t\in(0, T-r^2)}}\fint_t^{t+r^2}\fint_{B(x, r)}\fint_{B(x, r)}|\widehat{A}(\omega, \varsigma)-\widehat{A}(z, \varsigma)|d\omega dzd\varsigma.
  \end{align}
Indeed, let $N\in \mathbb{N}^+$ and we decompose $[0, T)$ into $N$ intervals $E_k=[kT/N, (k+1)T/N)$, $k=0, \dots, N-1$. Set
\begin{align*}
  \mathfrak{A}_N(x, t)=\fint_{E_k}\widehat{A}(x, \varsigma)d\varsigma\quad \mathrm{if~}t\in E_k, \quad k=0, \dots, K-1.
\end{align*}
Then $\mathfrak{A}_N$ is finitely-valued w.r.t. $t$ and $\mathfrak{A}_N(\cdot, t)\in W^{1, d}(\Omega)$ for each $t$. By using inequality \eqref{parabolic_ineq_poincare}, together with H\"{o}lder's inequality,  we calculate that
\begin{align*}
  \fint_{B(x, r)}\fint_{B(x, r)}|\mathfrak{A}_N(\omega, t)-\mathfrak{A}_N(z, t)|d\omega dz&\leq C_d\fint_{B(x, r)}\int_{B(x ,r)}|\nabla \mathfrak{A}_N(y, t)||y-z|^{1-d}dydz\\&\leq C\|\nabla\mathfrak{A}_N(\cdot, t)\|_{L^d(B(x, r))},
\end{align*}
which yields that $\mathfrak{A}_N(\cdot, t)$ is a VMO function on $\Omega$ for each $t$. Since $\mathfrak{A}_N$ is finitely-valued, we have
\begin{align}\label{parabolic_conver_es_VMO}
    \lim_{R\rightarrow 0}\sup_{0<r<R}\sup_{\substack{B(x, r)\subset\Omega\\t\in(0, T)}}\fint_{B(x, r)}\fint_{B(x, r)}|\mathfrak{A}_N(\omega, t)-\mathfrak{A}_N(z, t)|d\omega dz=0.
\end{align}
Moreover, since $\widehat{A}\in W^{\frac{1}{2}, 2}(0, T; L^\infty(\Omega))$, it holds that
\begin{align}\label{parabolic_conver_es_VMO_2}
  \lim_{r\rightarrow 0}\sup_{t\in (0, T-r^2)}[\widehat{A}]_{W^{\frac{1}{2}, 2}(t, t+r^2; L^\infty(\Omega))}=0.
\end{align}
For $0<r<R$, $B(x, r)\subset\Omega$,
\begin{align*}
&\quad\sup_{t\in (0, T-r^2)}\fint_{t}^{t+r^2}\fint_{B(x, r)}\fint_{B(x, r)}|\widehat{A}(\omega, \varsigma)-\widehat{A}(z, \varsigma)|d\omega dzd\varsigma\\&\leq \sup_{t\in(0, T-r^2)}\fint_t^{t+r^2}\Big[2\fint_{B(x, r)}|\widehat{A}(\omega, \varsigma)-\mathfrak{A}_N(\omega, \varsigma)|d\omega+\fint_{B(x, r)}\fint_{B(x, r)}|\mathfrak{A}_N(\omega, \varsigma)-\mathfrak{A}_N(z, \varsigma)|d\omega dz\Big]d\varsigma\\&\leq 4\sup_{t\in(0, T-r^2)}\fint_{t}^{t+r^2}\fint_{t}^{t+r^2}\|\widehat{A}(\cdot, \varsigma_1)-\widehat{A}(\cdot, \varsigma_2)\|_{L^\infty(\Omega)}d\varsigma_1 d\varsigma_2\\&\qquad\qquad+\sup_{t\in(0, T)}\fint_{B(x, r)}\fint_{B(x, r)}|\mathfrak{A}_N(\omega, t)-\mathfrak{A}_N(z, t)|d\omega dz\\&\leq C\sup_{t\in (0, T-r^2)}[\widehat{A}]_{W^{\frac{1}{2}, 2}(t, t+r^2; L^\infty(\Omega))}+\sup_{t\in(0, T)}\fint_{B(x, r)}\fint_{B(x, r)}|\mathfrak{A}_N(\omega, t)-\mathfrak{A}_N(z, t)|d\omega dz,
  \end{align*}
which, together with \eqref{parabolic_conver_es_VMO}--\eqref{parabolic_conver_es_VMO_2}, yields that $\lim_{R\rightarrow 0}\varrho(R)=0$.

  Now, thanks to $L^q$-$L^p$ estimates of non-divergence type parabolic systems with $\mathrm{VMO}_x$ coefficients in $C^{1, 1}$ cylinders (see \cite{Dong2018_Ap} and references therein for the problems on the whole space and half space, from which one can deduce the estimates for bounded cylinders), we have
  \begin{align*}
      &\quad\|\partial_t v_0\|_{L^2(0, T; L^{q_0}(\Omega))}+\|v_0\|_{L^2(0, T; W^{2, q_0}(\Omega))}\leq C\{\|F\|_{L^2(0, T; L^{q_0}(\Omega))}+\|\nabla \widehat{A}\nabla v_0\|_{L^2(0, T; L^{q_0}(\Omega))}\}\\&\leq C\{\|F\|_{L^2(0, T; L^{q_0}(\Omega))}+\|\nabla \widehat{A}\|_{L^\infty(0, T; L^d(\Omega))}\|\nabla v_0\|_{L^2(0, T; L^{q_0^*}(\Omega))}\}\leq C\|F\|_{L^2(0, T; L^{q_0}(\Omega))},
\end{align*}
where we have also used $L^q$-$L^p$ estimates of divergence type parabolic systems with $\mathrm{VMO}_x$ coefficients \cite{Dong2011_divergence} in the last inequality, and $C$ depends only on $d, \mu, \Omega, \varrho, \|\nabla_xA\|_{L^{\infty, d}(\Omega_T; \bm{L^\infty})}$. This gives \eqref{parabolic_conver_es_v0_1}. \eqref{parabolic_conver_es_v0_2} now follows from the same argument as \eqref{parabolic_conver_es_I2_boundary}.
\end{proof}

Armed with the previous results, we are now prepared to prove Theorem \ref{parabolic_conver_thm_conver} by using the duality argument initiated in \cite{Suslina2013_Dirichlet}. See also \cite{Geng2017_Convergence}.

\begin{proof}[\textbf{Proof of Theorem \ref{parabolic_conver_thm_conver}}]
  By interpolation, we deduce from Lemmas \ref{parabolic_pre_lem_chi} and \ref{parabolic_pre_lem_fkB} that $$\chi, \mathfrak{B}\in L^\infty(\Omega_T; \bm{L^{4, p_0}}).$$ Note that $2\leq p_0\leq 4$, as $d\geq 2$. Then Lemma \ref{parabolic_pre_smooth_lem_2}, together with Remark \ref{parabolic_pre_remark_1}, implies that
  \begin{align*}
    &\quad\varepsilon\|[S_\varepsilon(\widetilde{\chi} K_\varepsilon(\nabla u_0))]^\varepsilon\|_{L^{2, p_0}(\Omega_T)}+\varepsilon^2\|[\partial_{x_k}S_\varepsilon(\widetilde{\mathfrak{B}}_{(d+1)kj}K_\varepsilon(\partial_j u_0))]^\varepsilon\|_{L^{2, p_0}(\Omega_T)}\\&\leq C\varepsilon\{\|\widetilde{\chi}\|_{L^\infty(\Omega_T; \bm{L^{p_0}})}+\|\widetilde{\mathfrak{B}}_{d+1}\|_{L^\infty(\Omega_T; \bm{L^{p_0}})}\}\|\nabla u_0\|_{L^{2, p_0}(\Omega_T)}\\&\leq C\varepsilon\|\nabla u_0\|_{L^2(0, T; \dot{W}^{1, q_0}(\Omega))}.
  \end{align*}
  Thus, it is sufficient to bound $\|w_\varepsilon\|_{L^{2, p_0}(\Omega_T)}$. To do this, let $v_\varepsilon$ be the solution of problem \eqref{parabolic_conver_eq_dual} and $\varpi_\varepsilon$ be given by \eqref{parabolic_conver_def_varpi}. Then we have
\begin{align}
      &\quad\iint_{\Omega_T} w_\varepsilon\cdot F dxdt=\int_0^T\langle\partial_tw_\varepsilon, v_\varepsilon\rangle+\iint_{\Omega_T}A^\varepsilon\nabla w_\varepsilon\cdot\nabla v_\varepsilon\nonumber\\&=\Big(\int_0^T\langle\partial_tw_\varepsilon, \varpi_\varepsilon\rangle+\iint_{\Omega_T}A^\varepsilon\nabla w_\varepsilon\cdot\nabla \varpi_\varepsilon\Big)+\Big(\int_0^T\langle\partial_tw_\varepsilon, v_0\rangle+\iint_{\Omega_T}A^\varepsilon\nabla w_\varepsilon\cdot\nabla v_0\Big)\nonumber\\&\quad+\Big(\int_0^T\langle\partial_tw_\varepsilon, \varpi_\varepsilon-v_\varepsilon-v_0\rangle+\iint_{\Omega_T}A^\varepsilon\nabla w_\varepsilon\cdot\nabla (\varpi_\varepsilon-v_\varepsilon-v_0)\Big)\doteq Q_1+Q_2+Q_3.\label{parabolic_conver_es_I}
\end{align}

For $Q_1$, by Corollaries \ref{parabolic_conver_coro_H1} and \ref{parabolic_conver_coro_varpi}, we have
\begin{align*}
  \begin{split}
 &\quad Q_1\leq C\varepsilon^{1/2}\{\|\nabla u_0\|_{L^2(0, T; \dot{W}^{1, q_0}(\Omega))}+\|\partial_t u_0\|_{L^2(0, T; L^{q_0}(\Omega))}\}\cdot\|\nabla\varpi\|_{L^2(\Omega_T)}\\&\leq C\varepsilon\{\|\nabla u_0\|_{L^2(0, T; \dot{W}^{1, q_0}(\Omega))}+\|\partial_t u_0\|_{L^2(0, T; L^{q_0}(\Omega))}\}\cdot\{\|\nabla v_0\|_{L^2(0, T; \dot{W}^{1, q_0}(\Omega))}+\|\partial_t v_0\|_{L^2(0, T; L^{q_0}(\Omega))}\}\\&\leq C\varepsilon\{\|\nabla u_0\|_{L^2(0, T; \dot{W}^{1, q_0}(\Omega))}+\|\partial_t u_0\|_{L^2(0, T; L^{q_0}(\Omega))}\}\cdot\|F\|_{L^2(0, T; L^{q_0}(\Omega))},
  \end{split}
\end{align*}
where we have used estimate \eqref{parabolic_conver_es_v0_1} in the last inequality, and $C$ depends only on $d$, $\mu$, $\Omega$, $[A]_{\mathscr{H}(\Omega_T; \bm{L^\infty})}$ and the $\mathrm{VMO}_x$ character of $\widehat{A}$ (which can be reduced to the character of $A$).

To bound $Q_2$, we use \eqref{parabolic_conver_es_w_psi} with $p=q_0, q=4$ to obtain
\begin{align*}
  Q_2&\leq C\{\varepsilon^{1/2}\|\nabla v_0\|_{L^2(\Omega^{6, \varepsilon}_T)}+\varepsilon\|\nabla v_0\|_{L^{2, p_0}(\Omega_T)}+\varepsilon\|\nabla \psi\|_{L^{4, 2}(\Omega_T)}\}\\&\qquad\cdot\{\|\nabla u_0\|_{L^2(0, T; \dot{W}^{1, {q_0}}(\Omega))}+\|\partial_t u_0\|_{L^2(0, T; L^{q_0}(\Omega))}\}\\ &\leq C\varepsilon\{\|\nabla u_0\|_{L^2(0, T; \dot{W}^{1, q_0}(\Omega))}+\|\partial_t u_0\|_{L^2(0, T; L^{q_0}(\Omega))}\}\cdot\|F\|_{L^2(0, T; L^{q_0}(\Omega))},
\end{align*}
where we have used estimates \eqref{parabolic_es_Sf}, \eqref{parabolic_conver_es_I2_boundary} and Lemma \ref{parabolic_conver_boundary_lem_embedding} in the first inequality, as well as \eqref{parabolic_conver_es_v0_1}--\eqref{parabolic_conver_es_v0_2} in the last one.

To deal with $Q_3$, we start from \eqref{parabolic_conver_es_Lw} with
$$\psi=\varepsilon [S_\varepsilon(\widetilde{\chi}^* K_\varepsilon(\nabla v_0))]^\varepsilon+\varepsilon^2[\partial_{x_k}S_\varepsilon(\widetilde{\mathfrak{B}}_{(d+1)kj}^* K_\varepsilon(\partial_j v_0))]^\varepsilon.$$
Note that the latter term in $\psi$ can be written into $$\varepsilon^2[\partial_{x_k}S_\varepsilon(\widetilde{\mathfrak{B}}_{(d+1)kj}^* K_\varepsilon(\partial_j v_0))]^\varepsilon=\varepsilon[(\partial_{k}\varphi_2)_\varepsilon*S^t_\varepsilon(\widetilde{\mathfrak{B}}_{(d+1)kj}^* K_\varepsilon(\partial_j v_0))]^\varepsilon,$$
which has the same form as the former one, and $\mathfrak{B}_{d+1}^*$ is more regular than $\chi^*$. This means the latter term can be handled in the same way as the former one. Similarly, taking the former term as the test function into \eqref{parabolic_conver_es_Lw}, we find
\begin{align*}
  \nabla(\varepsilon [S_\varepsilon(\widetilde{\chi}^* K_\varepsilon(\nabla v_0))]^\varepsilon)=[(\nabla\varphi_2)_\varepsilon*S^t_\varepsilon(\widetilde{\chi}^* K_\varepsilon(\nabla v_0))]^\varepsilon+[S_\varepsilon(\nabla_y\widetilde{\chi}^* K_\varepsilon(\nabla v_0))]^\varepsilon,
\end{align*}
where these two terms are in the same form and $\widetilde{\chi}^*$ is more regular than $\nabla_y\widetilde{\chi}^*$. Thus, in view of $I_1-I_5$ in the proof of Lemma \ref{parabolic_conver_lem_Lw}, the key is to bound
\begin{align*}  Q_{31}&=\Big|\iint_{\Omega_T}[A-\widehat{A}]^\varepsilon(\nabla u_0-S_\varepsilon(\nabla u_0))\cdot[S_\varepsilon(\nabla_y\widetilde{\chi}^* K_\varepsilon(\nabla v_0))]^\varepsilon\Big|,\\ Q_{32}&=\iint_{\Omega_T}|[AS_\varepsilon(\nabla_y\widetilde{\chi} K_\varepsilon(\nabla u_0))-S_\varepsilon(A\nabla_y \widetilde{\chi} K_\varepsilon(\nabla u_0))]^\varepsilon||[S_\varepsilon(\nabla_y\widetilde{\chi}^* K_\varepsilon(\nabla v_0))]^\varepsilon|,\\ Q_{33}&=\varepsilon^2\iint_{\Omega_T}|\{S_\varepsilon[\partial_t\widetilde{\mathfrak{B}}_{d+1}K_\varepsilon(\nabla u_0)+\widetilde{\mathfrak{B}}_{d+1}\nabla S_\varepsilon(\partial_tu_0)\eta_\varepsilon]\}^\varepsilon||[S_\varepsilon(\nabla_y\widetilde{\chi}^* K_\varepsilon(\nabla v_0))]^\varepsilon|.
\end{align*}
The rest terms can be handled in the same manner. Especially, the terms of boundary layers can be estimated by \eqref{parabolic_conver_es_I2_boundary} and \eqref{parabolic_conver_es_v0_2}.

To bound $Q_{31}$, we write
\begin{align*}
  Q_{31}&\leq \Big|\iint_{\Omega_T}(\nabla u_0-S_\varepsilon(\nabla u_0))\cdot\{(A^*-\widehat{A}^*) S_\varepsilon[\nabla_y\widetilde{\chi}^* K_\varepsilon(\nabla v_0)]-S_\varepsilon[(A^*-\widehat{A}^*)\nabla_y\widetilde{\chi}^* K_\varepsilon(\nabla v_0)]\}^\varepsilon\Big|\\&\qquad+\Big|\iint_{\Omega_T}(\nabla u_0-S_\varepsilon(\nabla u_0))\cdot\{S_\varepsilon[(A^*-\widehat{A}^*)\cdot\nabla_y\widetilde{\chi}^*\cdot K_\varepsilon(\nabla v_0)]\}^\varepsilon\Big|,
\end{align*}
where the second term can be handled by Lemma \ref{parabolic_conver_lem_u0-Su0}. For the first term, one can see from estimate \eqref{parabolic_conver_smoothing_coro_es_1} that it can be bounded by
\begin{align*}
  &\quad C\varepsilon\{\|\nabla u_0\|_{L^{2, p_0}(\Omega_T)}+\|\nabla u_0\|_{L^{4, 2}(\Omega_T)}\}\\&\qquad\qquad\cdot\{\|\nabla_y\widetilde{\chi}^* K_\varepsilon(\nabla v_0)\|_{L^{2, q_0^*}(\Omega_T; \bm{L^{\bar{q}}})}+\|\nabla_y\widetilde{\chi}^* K_\varepsilon(\nabla v_0)\|_{L^{4, 2}(\Omega_T; \bm{L^{\bar{q}}})}\}\\&\leq C\varepsilon\{\|\nabla u_0\|_{L^2(0, T; \dot{W}^{1, q_0}(\Omega))}+\|\partial_t u_0\|_{L^2(0, T; L^{q_0}(\Omega))}\}\cdot\{\|\nabla v_0\|_{L^2(0, T; \dot{W}^{1, q_0}(\Omega))}+\|\partial_tv_0\|_{L^2(0, T; L^{q_0}(\Omega))}\},
\end{align*}
where Lemma \ref{parabolic_conver_boundary_lem_embedding} was also used. Therefore, it follows from estimate \eqref{parabolic_conver_es_v0_1} that
\begin{align*}
  Q_{31}\leq C\varepsilon\{\|\nabla u_0\|_{L^2(0, T; \dot{W}^{1, q_0}(\Omega))}+\|\partial_t u_0\|_{L^2(0, T; L^{q_0}(\Omega))}\}\cdot\|F\|_{L^2(0, T; L^{q_0}(\Omega))}.
\end{align*}
For $Q_{32}$, by using \eqref{parabolic_conver_smoothing_coro_es_4}, we see that
\begin{align*}
  Q_{32}&\leq C\varepsilon\{\|\nabla_y\widetilde{\chi}K_\varepsilon(\nabla u_0)\|_{L^{2, q_0^*}(\Omega_T; \bm{L^{\bar{q}}})}\|\nabla_y\widetilde{\chi}^*K_\varepsilon(\nabla v_0)\|_{L^{2, p_0}(\Omega_T; \bm{L^{\bar{q}}})}\\&\qquad+\|\nabla_y\widetilde{\chi}K_\varepsilon(\nabla u_0)\|_{L^{4, 2}(\Omega_T; \bm{L^{\bar{q}}})}\|\nabla_y\widetilde{\chi}^*K_\varepsilon(\nabla v_0)\|_{L^{4, 2}(\Omega_T; \bm{L^{\bar{q}}})}\}\\&\leq C\varepsilon\{\|\nabla u_0\|_{L^2(0, T; \dot{W}^{1, q_0}(\Omega))}+\|\partial_t u_0\|_{L^2(0, T; L^{q_0}(\Omega))}\}\cdot\|F\|_{L^2(0, T; L^{q_0}(\Omega))},
\end{align*}
where Lemmas \ref{parabolic_pre_lem_chi}, \ref{parabolic_conver_boundary_lem_embedding} and \ref{parabolic_conver_lem_v0} were used and $C$ depends only on $d, \mu, \Omega$, $[A]_{\mathscr{W}^{\sigma, d}(\Omega_T; \bm{L^\infty})}$, the $\mathrm{VMO}_x$ character of $\widehat{A}$.
 To deal with $Q_{33}$, by Lemma \ref{parabolic_pre_smooth_lem_2}, we have
\begin{align*}
  Q_{33}&\leq C\varepsilon^2\|\partial_t\widetilde{\mathfrak{B}}_{d+1}K_\varepsilon(\nabla u_0)\|_{L^{4/3, 2}(\Omega_T; \bm{L^2})}\|\nabla v_0\|_{L^{4, 2}(\Omega_T)}\\&\qquad+C\varepsilon^2\|\widetilde{\mathfrak{B}}_{d+1}\nabla S_\varepsilon(\partial_tu_0)\eta_\varepsilon\|_{L^{2, q_0}(\Omega_T; \bm{L^2})}\|\nabla v_0\|_{L^{2, p_0}(\Omega_T)}\\&\leq C\varepsilon\{\|\nabla u_0\|_{L^2(0, T; \dot{W}^{1, q_0}(\Omega))}+\|\partial_t u_0\|_{L^2(0, T; L^{q_0}(\Omega))}\}\cdot\|F\|_{L^2(0, T; L^{q_0}(\Omega))},
\end{align*}
where we have also used Lemmas \ref{parabolic_pre_lem_fkB}, \ref{parabolic_pre_smooth_lem_partt}, \ref{parabolic_conver_boundary_lem_embedding} and \ref{parabolic_conver_lem_v0}. Consequently, we have
\begin{align*}
  Q_3\leq C\varepsilon\{\|\nabla u_0\|_{L^2(0, T; \dot{W}^{1, q_0}(\Omega))}+\|\partial_t u_0\|_{L^2(0, T; L^{q_0}(\Omega))}\}\cdot\|F\|_{L^2(0, T; L^{q_0}(\Omega))}.
\end{align*}

In view of the estimates of $Q_1, Q_2, Q_3$ and \eqref{parabolic_conver_es_I}, we get
\begin{align*}
  \Big|\iint_{\Omega_T} w_\varepsilon\cdot F dxdt\Big|\leq C\varepsilon\{\|\nabla u_0\|_{L^2(0, T; \dot{W}^{1, q_0}(\Omega))}+\|\partial_t u_0\|_{L^2(0, T; L^{q_0}(\Omega))}\}\cdot\|F\|_{L^2(0, T; L^{q_0}(\Omega))},
\end{align*}
which, by duality, yields the desired result. The proof is completed.
\end{proof}

\section*{Acknowledgements}
The author is much obliged to Professor Zhongwei Shen for the guidance.

\bibliographystyle{model1-num-names}
\bibliography{bib}

\end{document}